\newtheorem{theorem}{Theorem}[section]
\newtheorem{corollary}[theorem]{Corollary}
\newtheorem{lemma}[theorem]{Lemma}
\newtheorem{proposition}[theorem]{Proposition}
\newtheorem{problem}[theorem]{Problem}
\newtheorem{definition/theorem}[theorem]{Definition/Theorem}
\newtheorem{definition/proposition}[theorem]{Definition/Proposition}
\theoremstyle{definition} \newtheorem{definition}[theorem]{Definition}
\newtheorem{remark}[theorem]{Remark}
\newtheorem{example}[theorem]{Example}
\newtheorem{question}[theorem]{Question}
\newtheorem{hypothesis}[theorem]{Hypothesis}
\newcommand{\K}{\Bbbk}
\newcommand{\MOn}{\overline{M}_{0,n}}
\newcommand{\git}{\ensuremath{\operatorname{/\!\!/}}}
\newcommand{\G}{\ensuremath{\mathcal G_{\Delta}}}
\newcommand{\Gprime}{\ensuremath{\mathcal G_{\Delta^{\circ}}}}
\renewcommand{\L}{\ensuremath{\mathcal L_{\Delta}}}
\newcommand{\U}{\ensuremath{\mathcal F_{\Delta}}}
\newcommand{\Uw}{\ensuremath{\mathcal F_{\Delta,\mathbf{w}}}}
\DeclareMathOperator{\intt}{int}
\DeclareMathOperator{\relint}{relint}
\DeclareMathOperator{\trop}{trop}
\DeclareMathOperator{\inn}{in}
\DeclareMathOperator{\Pic}{Pic}
\DeclareMathOperator{\pos}{pos}
\DeclareMathOperator{\Hom}{Hom}
\DeclareMathOperator{\Nef}{Nef}
\DeclareMathOperator{\nef}{Nef}
\DeclareMathOperator{\Proj}{Proj}
\DeclareMathOperator{\Cox}{Cox}
\DeclareMathOperator{\val}{val}
\DeclareMathOperator{\starr}{star}
\DeclareMathOperator{\divv}{div}
\DeclareMathOperator{\Eff}{Eff}
\DeclareMathOperator{\Ass}{Ass}
\DeclareMathOperator{\Bl}{Bl}
\begin{document}

\title{Lower and upper bounds for nef cones}

\author{Angela Gibney}

\address{Department of Mathematics\\
University of Georgia\\
Athens, GA 30602\\
USA
}

\email{agibney@math.uga.edu}

\author{Diane Maclagan}
\address{Mathematics Institute\\
Zeeman Building\\
University of Warwick\\
Coventry CV4 7AL\\
United Kingdom}

\email{D.Maclagan@warwick.ac.uk}
\date{\today}

\maketitle

\begin{abstract}The nef cone of a projective variety $Y$ is an important and often elusive invariant.
In this paper we construct two polyhedral lower bounds and one polyhedral upper bound for the nef cone of $Y$ using an embedding of $Y$ into a toric variety. 
The lower bounds generalize the combinatorial description of the nef
cone of a Mori dream space, while the upper bound generalizes the
$\operatorname{F}$-conjecture for the nef cone of the moduli space
$\MOn$ to a wide class of varieties.
\end{abstract}

\date{\today}

\maketitle

\section{Introduction}
A central goal of birational geometry is to understand maps between
projective varieties.  The cone $\Nef(Y)$ of divisors that
nonnegatively intersect all curves on $Y$ encodes information about
the possible morphisms from $Y$ to other projective varieties.  The
interior of this cone is the cone of ample divisors, multiples of
which give rise to projective embeddings of $Y$, while divisors on the
boundary of the cone determine other morphisms.
This cone is hard to compute in general,
and is unknown for many even elementary varieties.  One contributing
factor to our general ignorance is that these cones, while convex, can
be very complicated, and in particular need not be polyhedral.

In this paper we construct polyhedral upper and lower bounds for nef
cones of varieties.  This gives (separate) necessary and sufficient
conditions for a divisor to be nef: the lower bound is a polyhedral
cone whose interior consists of divisors we certify to be ample, while
if a divisor lives outside the polyhedral upper bound cone it is
definitely not ample.  Our program exploits well-chosen embeddings of
the variety $Y$ into a toric variety $X_{\Delta}$, and unifies several
different approaches found in the literature.

Equivalent recipes for the nef cone of a projective toric variety
$X_{\Delta}$ can give rise to different cones in
$\Pic(X_{\Delta})_{\mathbb{R}}:= \Pic(X_{\Delta}) \otimes \mathbb R$
when the variety is not complete.  Explicitly, on a projective toric
variety, a divisor is nef if and only if it is globally generated, if
and only if its pullback to every torus invariant subvariety is
effective, and if and only if it nonnegatively intersects every torus
invariant curve.  When $X_{\Delta}$ is not projective, the three
corresponding cones $\G$, $\L$, and $\Uw$ in
$\Pic(X_{\Delta})_{\mathbb{R}}$ satisfy $\G \subseteq \L \subseteq
\Uw$, where each inclusion can be proper; see
Proposition~\ref{p:toriccontainment}.  The vector $\mathbf{w}$ is a
cohomological invariant that compensates for the fact that a
non-projective toric variety may have no torus-invariant curves; see
Definition~\ref{d:Fdef}.

Given an embedding $i: Y \rightarrow X_{\Delta}$, we pull these three
cones in $\Pic(X_{\Delta})_{\mathbb R}$ back to the N\'eron Severi space $N^1(Y)_{\mathbb
  R}$ of $Y$.  In this way, for sufficiently general embeddings, we
obtain (Theorem~\ref{t:bound}) both lower and upper bounds
$\mathcal{G}_{\Delta}(Y) \subseteq \mathcal{L}_{\Delta}(Y) \subseteq
\nef(Y) \subseteq \mathcal{F}_{\Delta}(Y)$ for the nef cone of $Y$.
The lower bounds hold for any toric embedding, while the upper bound
requires that the induced map $i^* \colon \Pic(X_{\Delta})_{\mathbb R}
 \rightarrow N^1(Y)_{\mathbb R}$ is surjective, and that the
fan $\Delta$ equals the tropical variety of $Y \cap T$, where $T$ is
the torus of $X_{\Delta}$; see Section~\ref{bounds}.

The requirement of a toric embedding for $Y$ does not impose any
restrictions on $Y$; every projective variety embeds into the toric
variety $\mathbb P^N$.  The pullback of $\mathcal O(1)$ on $\mathbb
P^N$ can be considered a (not-very-informative) lower bound for
$\nef(Y)$.  Toric embeddings can be chosen 
so that the resulting lower bound is a
full-dimensional subcone of the nef cone.  Our bounds depend on the
choice of toric embedding, and a given variety may have several useful
embeddings.  In addition, many interesting varieties come with natural
embeddings into toric varieties satisfying all required conditions;
see Sections~\ref{s:MDS} and \ref{s:M0n}.

The question of what can be deduced about an embedded variety from an
ambient toric variety has been a theme in the literature, with
variants of the cones $\G$, $\L$, and $\Uw$ appearing in special
cases.  This paper provides a unifying framework generalizing
these constructions.

In the context of mirror symmetry, 
Cox and Katz conjectured a description for the toric part of the nef cone of a
Calabi Yau hypersurface in a toric variety $X_{\Delta}$  \cite[Conjecture 6.2.8]{CoxKatz}.  We show that this is the cone $\G$; see
Lemma~\ref{l:CoxKatzisG}.  The subsequent
counterexamples to this conjecture and its variants
(\cite{SzendroiCoxKatz}, \cite{HassettLinWang}, \cite{SzendroiAmple},
\cite{Buckley}) give examples of the lower bound given by $\G$ not
being exact.

Another interesting class is given by Mori dream spaces~\cite{HuKeel},
important examples of which are log Fanos of general type~\cite{BCHM}.
A Mori dream space $Y$ has a natural embedding into a non-complete
toric variety $X_{\Delta}$ for which the induced map $i^* \colon
\Pic(X_{\Delta})_{\mathbb{R}} \rightarrow N^1(Y)_{\mathbb R}$ is an
isomorphism.  One may regard the ambient toric variety $X_{\Delta}$ as
a Rosetta Stone, encoding all birational models of $Y$; see
\cite{HuKeel}, \cite{BerchtoldHausen}, and \cite{Hausen2}.  In this
case the nef cone of $Y$ equals the pullback of $\G$ and $\L$; see
Section~\ref{s:MDS}.  The lower bounds $\G$ and $\L$ may thus be
considered as generalizations to arbitrary varieties of the
construction of the nef cone of a Mori dream space.  

The moduli space $\MOn$ of stable genus zero curves with $n$ marked
points also has an embedding into a non-complete toric variety
$X_{\Delta}$ with $\Pic(X_{\Delta})_{\mathbb{R}} \cong
N^1(\MOn)_{\mathbb R}$, where $\Delta$ is the space of phylogenetic
trees from biology (see \cite{Tevelev}, \cite{GMEquations}).  The nef
cone of $\MOn$ is famously unknown, with a possible description given
by the $\operatorname{F}$-Conjecture.  We show in
Proposition~\ref{p:GLF} that this cone equals the pullback of $\Uw$.
The original motivation for the $\operatorname{F}$-Conjecture came
from an analogy between $\MOn$ and toric varieties.  It suggests that,
as for complete toric varieties, the one-dimensional boundary strata
of $\overline{\operatorname{M}} _{0,n}$ should span its cone of
curves.  The interpretation of the conjecture as $\Nef(\MOn)=
\Uw(\MOn)$ thus deepens and explains this connection.

In addition, the upper bound $\mathcal{F}_{\Delta}(Y)$ can be considered to be a generalization of
the $\operatorname{F}$-Conjecture to  varieties $Y$ that can be realized as tropical 
compactifications.  The lower bound cone $\mathcal{L}_{\Delta}(Y)$ is defined for 
an even wider class of varieties,  and we propose that for $\MOn$ the lower bound cone $\L(\MOn)$ equals the
nef cone.  

Explicitly, let $\mathcal{I}=\{I \subset \{1,\ldots,n\}: 1
\in I \text{ and } |I|,|I^c| \geq 2\}$, and let $\delta_I$ denote the
boundary divisor on $\MOn$ corresponding to $I \in \mathcal{I}$.  We denote by $\pos(v_1,\dots,v_r)$ the positive hull $\{\sum_{i=1}^{r}\lambda_i v_i : \lambda_i \ge 0\}$ of a finite set of vectors $\{v_1,\dots,v_r\}$ in $\mathbb{R}^n$.
Then $\L(\MOn)$ equals
$$ \bigcap_{\sigma} \pos(\delta_I , \pm \delta_J : I, J \in
\mathcal{I} \setminus \sigma, \\ \delta_I \cap \delta_K \ne \emptyset,
\ \forall K \in \sigma, \text{and } \delta_J \cap \delta_L =
\emptyset \mbox{ for some } L \in \sigma),$$ where the intersection is
over all subsets $\sigma = \{ I_1,\dots, I_{n-3} \}$ of $n-3$ distinct
elements of $\mathcal I$ for which $\cap_{j=1}^{n-3} \delta_{I_j} \neq
\emptyset$.

For $n \leq 6$ we have verified that $\L(\MOn) = \nef(\MOn)=\Uw(\MOn)$.
While this may be true in general, as we are inclined to believe, we
feel that showing $\L(\MOn) = \nef(\MOn)$ may be more accessible than
the $\operatorname{F}$-conjecture.  The resulting description of
$\nef(\MOn)$ shares the main advantage of that given by the
$\operatorname{F}$-conjecture in that it provides a concrete
polyhedral description of the nef cone, allowing detailed analysis.

We now outline the structure of the paper.  The definitions of the
cones $\G$, $\L$, and $\Uw$ for a toric variety $X_{\Delta}$ are given
in Section~\ref{s:Cones}, along with several equivalent combinatorial interpretations.  In Section~\ref{bounds} we prove the main
result, Theorem~\ref{t:bound}.  Section~\ref{Examples} is then
devoted to applying Theorem~\ref{t:bound} to many classes of examples.
In Section~\ref{ss:delpezzo} we consider the resulting bounds on the
nef cone of a del Pezzo surface.  In Section~\ref{ss:CY} we consider
the case that $Y$ is an ample hypersurface in a toric variety and
connections with the Cox/Katz conjecture, while in
Section~\ref{ss:PicardRankTwo} we consider embeddings into toric
varieties of Picard rank two.  The application to Mori dream spaces is
described in Section~\ref{s:MDS}.  Finally, in Section~\ref{s:M0n} we
apply the main theorem to the moduli space $\MOn$.

{\bf Acknowledgements: } We
would like to thank Paul Hacking, Eric Katz, Sean Keel,
Danny Krashen, James McKernan, Sam Payne, Kevin Purbhoo, Frank Sottile, David Speyer,
Bernd Sturmfels and Jenia Tevelev for helpful and interesting conversations related to this work.  The authors were partially
supported by NSF grants DMS-0509319 (Gibney) and DMS-0500386
(Maclagan).

\section{Cones of divisors on $X_{\Delta}$}
\label{s:Cones}

Let $X_{\Delta}$ be a normal toric variety with fan $\Delta$.  
In this section we define three cones in $\Pic(X_{\Delta})_{\mathbb
R}$: $$\G \subseteq \L \subseteq \Uw.$$ 
When $X_{\Delta}$ is complete all cones are equal to
the nef cone of $X_{\Delta}$.

We mostly follow the notation for normal toric varieties of Fulton's
book~\cite{Fulton}, which we briefly recall.  Fix a lattice $N \cong
\mathbb Z^n$, and let $N_{\mathbb R} = N \otimes \mathbb R$.  Let $M =
\Hom(N,\mathbb Z)$.  We denote the pairing of $\mathbf{u} \in M$ and
$\mathbf{v} \in N$ by $\langle \mathbf{u} , \mathbf{v}\rangle$. 

Throughout this paper $\Delta$ will be a fan in $N_{\mathbb R}$ that
is not contained in any proper subspace.  Often we will assume that
$\Delta$ is pure of dimension $d$.  We emphasize that almost always we
will have $d<n$, so the corresponding toric variety, which we denote
by $X_{\Delta}$, is not complete.  We denote by $\Delta(k)$ the set of
cones of $\Delta$ of dimension $k$ for $0 \leq k \leq n$, and by
$|\Delta|$ the support $\{ v \in N_{\mathbb R} : v \in \sigma \text{
  for some } \sigma \in \Delta \}$.  We use the notation $i \in
\sigma$ for $\sigma \in \Delta$ to denote that the $i$th ray of
$\Delta$ is a ray of the cone $\sigma$.  By a piecewise linear
function $|\Delta| \rightarrow \mathbb R$ we will mean one that is
linear on each cone of $\Delta$.

For $i \in \Delta(1)$ we write $D_i$ for the corresponding torus-invariant divisor.  Any Weil divisor on $X_{\Delta}$ is linearly equivalent to one of the
form $D=\sum_{i \in \Delta(1)} a_i D_i$.  The divisor $D$ is
$\mathbb Q$-Cartier if there is a piecewise linear function $\psi
\colon |\Delta| \rightarrow \mathbb R$ with $\psi(\mathbf{v_i}) =
a_i$, where $\mathbf{v_i}$ is the first lattice point on the $i$th ray
of $\Delta$.  We write $\psi(\mathbf{v}) = - \langle \mathbf{u}(\sigma),
\mathbf{v} \rangle $ for $\mathbf{v} \in \sigma$ and $\mathbf{u}(\sigma) \in
M_{\mathbb R}$.  An element $\mathbf{u} \in M$ defines a global linear
function that corresponds to the Cartier divisor
$\divv(\chi^{\mathbf{u}}) = \sum \langle \mathbf{u},\mathbf{v}_i \rangle D_i$.
We can thus identify $\Pic(X_{\Delta})_{\mathbb R}$ with the set of
piecewise linear functions on $|\Delta|$ modulo global linear
functions.

We denote by $\pos( S )$ the cone in $\Pic(X_{\Delta})_{\mathbb R}$
generated by a collection of divisors $S \subseteq
\Pic(X_{\Delta})_{\mathbb R}$, where $S$ is some set.  Our first cone
is the following:

\begin{definition} 
The cone $\G$ is the set 
$$\G = \pos( [D] \in \Pic(X_{\Delta})
: D \text{ is globally generated} ).$$
\end{definition}

The cone $\G$ can be computed directly from the fan $\Delta$, as
Proposition~\ref{p:Gcone} below illustrates.  For the strongest result
we will need the following additional hypothesis.

\begin{hypothesis} \label{hyp:quasiproj}
There is a projective toric variety $X_{\Sigma}$ 
with 
$\Delta \subseteq \Sigma$.
\end{hypothesis}

A function $\psi : N_{\mathbb R} \rightarrow \mathbb R$ is
{\em convex} if $\psi(\sum_{i=1}^l \mathbf{u}_i) \leq \sum_{i=1}^l
\psi(\mathbf{u}_i)$ for all choices of $\mathbf{u}_1,\dots,
\mathbf{u}_l \in N_{\mathbb R}$.

\begin{proposition} \label{p:Gcone}

Fix $[D] \in \Pic(X_{\Delta})$ with $D=\sum_{i \in \Delta(1)} a_i D_i$.  Then
the following are equivalent:  \newcounter{saveenum}
\begin{enumerate}
\item \label{i:Gdefn} $[D] \in \G$;

\item \label{i:Gasintersection} $[D] \in \bigcap_{\sigma \in \Delta}
\pos([D_{i}] : i \not \in \sigma)$;

\item \label{i:Gconepsi} 
There is a piecewise linear convex function $\psi \colon N_{\mathbb R}
\rightarrow \mathbb R$ that is linear on the cones of $\Delta$ with
$\psi(\mathbf{v}_i) = a_i$.
 \setcounter{saveenum}{\value{enumi}}
\end{enumerate}

If in addition $\Delta$ satisfies Hypothesis~\ref{hyp:quasiproj} then these are also equivalent to:
\begin{enumerate}
\setcounter{enumi}{\value{saveenum}}
\item \label{i:Gunionnef} $[D] \in \bigcup_{\Sigma}
i_{\Sigma}^*(\nef(X_{\Sigma}))$, where the union is over all
projective toric varieties $X_{\Sigma}$ with $\Delta \subset \Sigma$
and $i_{\Sigma}$ is the inclusion morphism of $X_{\Delta}$ into
$X_{\Sigma}$.  This union is equal to the union restricted to those
$\Sigma$ with $\Sigma(1)=\Delta(1)$.

\end{enumerate}

\end{proposition}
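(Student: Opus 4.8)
The plan is to establish the chain of equivalences $(\ref{i:Gdefn}) \Leftrightarrow (\ref{i:Gasintersection}) \Leftrightarrow (\ref{i:Gconepsi})$ first, using only the combinatorial dictionary of toric geometry, and then to add $(\ref{i:Gunionnef})$ under Hypothesis~\ref{hyp:quasiproj}. For $(\ref{i:Gdefn}) \Leftrightarrow (\ref{i:Gasintersection})$, I would recall that a torus-invariant divisor $D = \sum a_i D_i$ is globally generated if and only if for every maximal cone $\sigma \in \Delta$ there is $\mathbf{u}(\sigma) \in M$ with $\langle \mathbf{u}(\sigma), \mathbf{v}_i \rangle = -a_i$ for $i \in \sigma$ and $\langle \mathbf{u}(\sigma), \mathbf{v}_i \rangle \geq -a_i$ for all $i \in \Delta(1)$; this is the standard criterion (as in \cite{Fulton}), valid even in the non-complete case since it only concerns global sections of $\mathcal O_{X_\Delta}(D)$. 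Translating: $D - \divv(\chi^{\mathbf u(\sigma)}) = \sum_i (a_i + \langle \mathbf u(\sigma), \mathbf v_i\rangle) D_i$ is an effective divisor supported on the $D_i$ with $i \notin \sigma$, which says exactly that $[D] \in \pos([D_i] : i \notin \sigma)$. Taking the intersection over all $\sigma$ gives the equivalence; the only subtlety is that a globally generated $D$ may not be Cartier on all of $X_\Delta$, but global generation of the associated sheaf still produces the vectors $\mathbf u(\sigma)$, so the argument goes through at the level of $\Pic(X_\Delta)_{\mathbb R}$ after noting that $\G$ is the positive hull of classes of genuinely globally generated divisors.

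For $(\ref{i:Gasintersection}) \Leftrightarrow (\ref{i:Gconepsi})$, I would define $\psi$ on $N_{\mathbb R}$ by $\psi(\mathbf v) = \max_{\sigma \in \Delta(d)} \langle \mathbf u(\sigma), \mathbf v \rangle$ using the vectors $\mathbf u(\sigma)$ produced above (extending the maximum over all maximal cones of $\Delta$). Such a function is automatically convex as a maximum of linear functions, is linear on each cone of $\Delta$ by construction, and satisfies $\psi(\mathbf v_i) = a_i$ provided the support functions agree at the rays — which is exactly the content of $(\ref{i:Gasintersection})$. Conversely, given a convex piecewise linear $\psi$ linear on the cones of $\Delta$ with $\psi(\mathbf v_i) = a_i$, restricting to each maximal cone gives a linear functional, and convexity forces this functional to lie below $\psi$ globally, which reproduces the inequalities $\langle \mathbf u(\sigma), \mathbf v_i\rangle \geq -a_i$ and hence membership in each $\pos([D_i] : i \notin \sigma)$. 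The key point to check carefully is the passage between "convex as a function on $N_{\mathbb R}$" in the sense defined in the excerpt (subadditivity) and "convex in the usual sense of lying below its chords" for piecewise linear functions that are linear on the cones of a fan; for functions linear on each cone these two notions coincide, but I would want to spell that out since $\psi$ need not be positively homogeneous a priori — actually it is, being built from linear pieces through $N_{\mathbb R}$, so subadditivity and midpoint convexity agree.

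For the final claim $(\ref{i:Gconepsi}) \Leftrightarrow (\ref{i:Gunionnef})$ under Hypothesis~\ref{hyp:quasiproj}, the forward direction is the heart of the matter: given a convex piecewise linear $\psi$ on $N_{\mathbb R}$, linear on the cones of $\Delta$, I need to produce a complete projective fan $\Sigma \supseteq \Delta$ on which $\psi$ is the support function of an ample (hence nef) divisor. The idea is to refine $\psi$: take the coarsest fan $\Sigma_\psi$ on which $\psi$ is linear on each cone — its cones are the domains of linearity of $\psi$ — but this need not be projective nor contain $\Delta$ as a subfan if the domains of linearity cut across cones of $\Delta$. To fix this I would instead take a common refinement of $\Sigma_\psi$ and $\Delta$, then use Hypothesis~\ref{hyp:quasiproj} together with a small generic perturbation of $\psi$ by a strictly convex function supported on a projective completion of $\Delta$ to force strict convexity and projectivity; the perturbed function then defines an ample divisor on a projective toric variety $X_\Sigma$ with $\Sigma(1) = \Delta(1)$, and $i_\Sigma^*$ of its class is $[D]$. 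The reverse direction is easy: the pullback of a nef divisor on any complete $X_\Sigma \supseteq X_\Delta$ has a convex support function, which restricts to a convex function linear on the cones of $\Delta$. I expect the main obstacle to be the construction in the forward direction of $(\ref{i:Gunionnef})$ — ensuring that the refinement can be taken both projective (using the existence of some projective $X_\Sigma$ from the hypothesis as a scaffold, via toric Chow lemma / Zariski–Fujita type arguments) and with no new rays, i.e. $\Sigma(1) = \Delta(1)$, which requires choosing the perturbing strictly convex function to have its only non-linearity along walls already present after refining by $\Delta$'s domains of linearity of $\psi$.
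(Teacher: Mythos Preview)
Your treatment of $(\ref{i:Gdefn}) \Leftrightarrow (\ref{i:Gasintersection})$ matches the paper's. For $(\ref{i:Gasintersection}) \Rightarrow (\ref{i:Gconepsi})$ your finite maximum $\psi(\mathbf v) = \max_\sigma(-\langle \mathbf u(\sigma), \mathbf v\rangle)$ is equivalent to the paper's support function $\psi(\mathbf v) = -\min_{\mathbf u \in P_D}\langle \mathbf u, \mathbf v\rangle$ of the polyhedron $P_D$. In $(\ref{i:Gconepsi}) \Rightarrow (\ref{i:Gasintersection})$ you say that restricting $\psi$ to each maximal cone gives a linear functional; but maximal cones of $\Delta$ are typically not full-dimensional in $N_{\mathbb R}$, so the extension to an element of $M_{\mathbb R}$ is not unique, and an arbitrary extension need not lie below $\psi$ globally. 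The paper fixes this by observing that $\psi$, being piecewise linear on all of $N_{\mathbb R}$, has a full-dimensional domain of linearity $\tau \supseteq \sigma$, and the linear functional determined by $\tau$ does lie below $\psi$ everywhere by convexity. This is a small patch, not a serious gap.

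The substantive divergence is in $(\ref{i:Gasintersection}) \Rightarrow (\ref{i:Gunionnef})$. Your perturbation strategy is plausible in outline, but the obstacle you flag is real: the common refinement of the domains of linearity of $\psi$ with a projective $\Sigma_0 \supseteq \Delta$ coming from Hypothesis~\ref{hyp:quasiproj} can introduce rays not in $\Delta(1)$, and nothing in your sketch prevents this. The paper avoids the issue by a different route. First, it uses Hypothesis~\ref{hyp:quasiproj} only to show that $\bigcap_{\sigma \in \Delta} \relint \pos([D_i] : i \notin \sigma)$ is nonempty: the pullback of a sufficiently ample class from the guaranteed projective $X_\Sigma$ lands in every relative interior. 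Second, for $D = \sum a_i D_i$ in this open intersection, the \emph{regular subdivision} $\Sigma'$ of $\{\mathbf v_i : i \in \Delta(1)\}$ induced by the heights $(a_i)$ automatically contains every $\sigma \in \Delta$ as a cone, has $\Sigma'(1) = \Delta(1)$ by construction, and makes $D$ ample on $X_{\Sigma'}$. Third, since the restricted union $\bigcup_{\Sigma(1) = \Delta(1)} i_\Sigma^*(\Nef(X_\Sigma))$ is a \emph{finite} union of closed cones, it is closed, and therefore contains the closure of the open intersection, which is all of $\G$. This closure-of-the-interior argument replaces your perturbation and delivers $\Sigma(1) = \Delta(1)$ without any further work.
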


\begin{proof}
{\bf \ref{i:Gdefn} $\leftrightarrow$ \ref{i:Gasintersection}}:
Recall that $D$ is globally generated if and only if for each $\sigma
\in \Delta$ there is a $\mathbf{u}(\sigma) \in M$ for which
$\langle \mathbf{u}(\sigma), \mathbf{v}_i \rangle \geq -a_i$ for all $i$, and
$\langle \mathbf{u}(\sigma) , \mathbf{v}_i \rangle  = -a_i$ when $i \in \sigma$
(see \cite[p68]{Fulton}).  If $[D] \in \pos([D_{i}] : i \not \in
\sigma)$ then there a representative of $[D]$ of the form $\sum_{i
\not \in \sigma} a_i D_i$, where $a_i \geq 0$ for all $i$, so we can
take $\mathbf{u}(\sigma)=0$ for this $D$.  Conversely, if $D$ is
globally generated, then for each $\sigma$ we note that $D +
\mathrm{div}(\chi^{\mathbf{u}(\sigma)})$ is an effective combination of $\{
D_i : i \not \in \sigma \}$, so $[D] \in \pos([D_i] : i \not \in
\sigma)$ for each $\sigma \in \Sigma$.

{\bf \ref{i:Gdefn} $\rightarrow$ \ref{i:Gconepsi}}: Let $P_D$ be the
polyhedron $\{ \mathbf{u} \in M_{\mathbb R} : \langle \mathbf{u},
\mathbf{v}_i \rangle \geq -a_i \text{ for all } i \}$.  Then $D$ is globally
generated if and only if each cone $\sigma$ of $\Delta$ is contained
in a cone of the inner normal fan $\mathcal N$ of $P_D$.  Let $\psi
\colon N_{\mathbb R} \rightarrow \mathbb R$ be defined by
$\psi(\mathbf{v}) = -\min_{\mathbf{u} \in P_D} \langle \mathbf{u},
\mathbf{v} \rangle$.  Then $\psi$ is a convex function that is linear
on the cones of $\mathcal N$,  and thus linear on the cones of
$\Delta$, with $\psi(\mathbf{v}_i)=\mathbf{a}_i$, as required.

{\bf \ref{i:Gconepsi} $\rightarrow$ \ref{i:Gasintersection}}: Suppose
that there is a piecewise linear convex function $\psi$ on $N_{\mathbb
  R}$ that is linear on the cones of $\Delta$ with
$\psi(\mathbf{v}_i)=a_i$.  Then for any fixed $\sigma \in \Delta$
there is a full-dimensional cone $\tau \subseteq N_{\mathbb R}$
containing $\sigma$ and $\mathbf{u} \in M_{\mathbb R}$ for which
$\psi(\mathbf{v}) = - \langle \mathbf{u} , \mathbf{v} \rangle$ for all
$\mathbf{v} \in \tau$.  Fix $\mathbf{v} \in \intt(\tau)$.  Since
$\tau$ is full-dimensional, for any $i$ there is $0<\lambda<1$ with
$\mathbf{v}'=(1-\lambda)\mathbf{v} +\lambda \mathbf{v}_i \in \tau$.
Since $\psi$ is convex, we have $\psi(\mathbf{v}') = -\langle
\mathbf{u}, \mathbf{v}' \rangle = -(1-\lambda) \langle \mathbf{u},
\mathbf{v} \rangle - \lambda \langle \mathbf{u}, \mathbf{v}_i \rangle
\leq \psi((1-\lambda)\mathbf{v}) + \psi( \lambda \mathbf{v}_i) =
-(1-\lambda) \langle \mathbf{u}, \mathbf{v} \rangle + \lambda a_i$, so
$a_i + \langle \mathbf{u}, \mathbf{v}_i \rangle \geq 0$.  If
$\mathbf{v}_i \in \tau$ we have $a_i + \langle \mathbf{u},
\mathbf{v}_i \rangle = 0$.  This implies that
$D'=D+\divv(\chi^{\mathbf{u}}) \in \pos([D_i] : i \not \in \sigma)$,
and thus $[D] \in \cap_{\sigma \in \Delta} \pos([D_i] : i \not \in
\sigma)$.

{\bf \ref{i:Gunionnef} $\rightarrow$ \ref{i:Gdefn}}: Let
$i \colon X_{\Delta} \rightarrow X_{\Sigma}$ be an inclusion of
$X_{\Delta}$ into a projective toric variety $X_{\Sigma}$.  If $[D]=
i^*([D'])$ for some nef, and thus globally generated, divisor class
$[D']$ on $X_{\Sigma}$, then $[D]$ is globally generated,
since the pullback of a globally generated divisor is globally
generated.
 Thus $\bigcup_{\Sigma} i^*(\nef(X_{\Sigma})) \subseteq \mathcal
G(X_{\Delta})$.

{\bf \ref{i:Gasintersection} $\rightarrow$ \ref{i:Gunionnef}}: We
first note that Hypothesis~\ref{hyp:quasiproj} implies that the
intersection over all $\sigma \in \Delta$ of the relative interiors of
$\pos(D_{i} : i \not \in \sigma)$ is nonempty.  To see this consider a
projective toric variety $X_{\Sigma}$ with $\Delta \subset \Sigma$
whose existence is guaranteed by Hypothesis~\ref{hyp:quasiproj}.  Let
$D'_i$ denote the torus invariant divisor corresponding to the $i$th ray
of $\Sigma$.  For each $\sigma \in \Delta$ fix a Cartier divisor
$D'_{\sigma} = \sum_{i \in \Sigma(1)} (a_{\sigma})_i D'_i$ with
$(a_{\sigma})_i>0$ for $i \not \in \sigma$ and $(a_{\sigma})_i=0$ for
$i \in \sigma$.  Let $D'$ be an ample Cartier divisor on
$X_{\Sigma}$.  We may choose $D'$ sufficiently positive so that
$D'-D'_{\sigma}= \sum (a'_{\sigma})_i D'_i$ is also ample for all
$\sigma \in \Delta$.  Then for any $\sigma \in \Delta$, since $\sigma
\in \Sigma$ there is $\mathbf{u}(\sigma) \in M$ with $\langle \mathbf{u}(\sigma),
\mathbf{v}_i \rangle = -(a'_{\sigma})_i $ for $i \in \sigma$, and
$\langle \mathbf{u}(\sigma), \mathbf{v}_i \rangle \geq -(a'_{\sigma})_i $ for
$i \not \in \sigma$.  Then
$[D']=[D'-D'_{\sigma}+\divv(\chi^{\mathbf{u}(\sigma)})] + [D'_{\sigma}]
=\sum_{i \not \in \sigma} (a''_{\sigma})_i D'_i$, where
$(a''_{\sigma})_i > 0$ for all $i \not \in \sigma$.  Thus $i^*([D']) \in
\bigcap_{\sigma \in \Delta} \relint(\pos(D_i : i \not \in \sigma))$.

Since this intersection is nonempty, its closure is $\bigcap_{\sigma
  \in \Delta} \pos(D_{i} : i \not \in \sigma)$.  Thus
to show that $\bigcap_{\sigma \in \Delta} \pos(D_i : i \not \in
\sigma) \subseteq \bigcup_{\Sigma } i^*(\nef(X_{\Sigma}))$ it suffices
to show that $\bigcap_{\sigma \in \Delta} \relint(\pos(D_{i} : i \in
\sigma)) \subseteq \bigcup_{\Sigma : \Sigma(1)=\Delta(1)}
i^*(\nef(X_{\Sigma}))$, as this latter, a priori smaller, set is a
finite union of closed sets.  Let $D = \sum a_i D_i \in
\bigcap_{\sigma \in \Delta} \relint(\pos(D_{i} : i \not \in \sigma))$.
Then the regular subdivision $\Sigma'$ of $\{ \mathbf{v}_i \}$ induced
by the $a_i$ contains $\sigma$ as a face for all $\sigma \in \Delta$
(see, for example, \cite[Chapter 8]{GBCP}).  Let $D'=\sum a_i D'_i$
denote the corresponding divisor on $X_{\Sigma'}$.  By construction
$P_{D'}$ has $\Sigma'$ as its normal fan, so $D'$ is ample; see
\cite[pp66-70]{Fulton}.  Thus $[D]=i^*([D']) \in
i^*(\nef(X_{\Sigma'}))$.  Note that $\Sigma'(1)=\Delta(1)$ by
construction, so this shows that $\G \subseteq \bigcup_{\Delta \subset
  \Sigma, \Delta(1)=\Sigma(1)} i^*(\nef(X_{\Sigma})) \subseteq
\bigcup_{\Delta \subset \Sigma} i^*(\nef(X_{\Sigma}))$.  Since we have
already shown the inclusion $\bigcup_{\Delta \subset \Sigma}
i^*(\nef(X_{\Sigma})) \subseteq \G$, we conclude that all three sets
coincide.
\end{proof}

\begin{remark}
 To see that Hypothesis~\ref{hyp:quasiproj} is needed for the
equivalence of the last item, consider the complete three-dimensional
toric variety $X_{\Delta}$ whose fan intersects the sphere
$S^2$ as shown in Figure~\ref{f:nonregular}, and for which $\mathbf{v}_1,\dots,\mathbf{v}_7$ are the columns of the matrix $V$ below.
In the picture, vertex $7$ has been placed at infinity.
\begin{figure}
\center{\resizebox{5cm}{!}{\input{nonregular.pdftex_t}}}
\caption{\label{f:nonregular}} 
\end{figure}

Then $\Pic(X_{\Delta})_{\mathbb R} \cong \mathbb Z^{4}$, with an isomorphism taking $[D_i]$ to the $i$th column of the matrix $G$:
\renewcommand{\arraystretch}{0.8}
\renewcommand{\arraycolsep}{2pt}
$$V= \left( \text{\footnotesize $\begin{array}{rrrrrrr}
3 & 0 & 0 & 2 & 1 & 1 & -1 \\
0 & 3 & 0 & 1 & 2 & 1 & -1 \\
0 & 0 & 3 & 1 & 1 & 2 & -1 \\
\end{array}$}
\right), \, \, \, \,
G = \left( \text{\footnotesize $ \begin{array}{rrrrrrr}
-1 & -1 & -1 & 1 & 1 & 1 & 1 \\
-1 & -2 & -1 & 0 & 3 & 0 & 0\\
-1 & -1 & -2 &0 & 0 & 3 & 0\\
1 & 1 & 1 & 0 & 0 & 0 & 3 \\
\end{array}$}
\right).$$ The cone $\G$ equals  $\pos((1,0,0,3))$.  This can be obtained from the description of
Part~\ref{i:Gasintersection} of Proposition~\ref{p:Gcone} using
software such as PORTA~\cite{Porta}.  There is no projective
toric variety $X_{\Sigma}$ with 
$\Delta \subseteq \Sigma$, as we would have to have $\Sigma=\Delta$
since $\Delta$ is complete, and $X_{\Delta}$ is not
projective.

In such cases we may recognize $\G$ as the union of the pullbacks of
the nef cones of all {\em complete} toric varieties $X_{\Sigma}$ with
$\Delta \subseteq \Sigma$.  Indeed, the proof of (\ref{i:Gunionnef}
$\rightarrow$ \ref{i:Gdefn}) of Proposition~\ref{p:Gcone} goes through
unchanged.  The other inclusion can be obtained by modifying the
second paragraph of (\ref{i:Gasintersection} $\rightarrow$
\ref{i:Gunionnef}) (ignoring the first paragraph, which no longer
applies), by taking a a refinement of the fan $\Sigma'$ so that
$\Delta$ is still a subfan.  The more restrictive statement of
Proposition~\ref{p:Gcone}(\ref{i:Gunionnef}), however, is more relevant
in our applications in the rest of this paper.
\end{remark}

A cone $\sigma \in \Delta$ determines a torus orbit $\mathcal O(\sigma)$ which
is isomorphic to $(\K^*)^{n-\dim(\sigma)}$.  We denote by $V(\sigma)$ the closure of
$\mathcal O({\sigma})$ in $X_{\Delta}$.

\begin{definition} \label{d:Lcone}
For $\sigma \in \Delta$ we denote by $i_{\sigma} : V(\sigma)
\rightarrow X_{\Delta}$ the inclusion of the orbit closure
$V(\sigma)$ into $X_{\Delta}$.
 Then
$$\L = \{ [D] \in \Pic(X_{\Delta})_{\mathbb R}
: i_{\sigma}^{*}([D]) \text{ is effective for all } \sigma \in
\Delta \}.$$ 
\end{definition}

\begin{remark}
Note that by taking $\sigma=\{0\}$ we see that $\L$ is contained in
the effective cone of $X_{\Delta}$.  Note also that when $\sigma$ is a
maximal cone of $\Delta$,  $V(\sigma) \cong (\K^*)^{n-\dim(\sigma)}$, so
the condition that $i_{\sigma}^*([D])$ is effective is vacuous.  Let
$\Delta^{\circ}$ be the subfan of $\Delta$ with all maximal cones removed.
We thus have $\L = \{ [D] \in \Pic(X_{\Delta} )_{\mathbb R} : i_{\sigma}^*([D])
\text{ is effective for all } \sigma \in \Delta^{\circ} \}$.
\end{remark}

Recall that the {\em star} of a cone $\sigma \in \Delta$ is the fan
$\starr(\sigma)$ whose cones are $\{ \tau \in \Delta : \sigma
\subseteq \tau \}$ together with all faces of these cones.  Let
$N_{\sigma}$ be the lattice generated by $N \cap \sigma$.  For $\tau
\in \starr(\sigma)$ we denote by $\overline{\tau}$ the cone $(\tau+
N_{\sigma} \otimes \mathbb R )/N_{\sigma} \otimes \mathbb R$.  The
cones $\{\overline{\tau} : \tau \in \starr(\sigma) \}$ form a fan in
$(N/N_{\sigma}) \otimes \mathbb R$, and the corresponding toric
variety is $V(\sigma)$.
The fan $\starr^1(\sigma)$ is the subfan of $\starr(\sigma)$ whose
top-dimensional cones are $\{ \tau \in \starr(\sigma) : \dim(\tau) =
\dim(\sigma)+1 \}$.  The top-dimensional cones of $\starr^1(\sigma)$
correspond to rays of the fan of $V(\sigma)$.  We denote by
$\mathbf{e}_{\tau}$ the first lattice point on the ray corresponding
to $\tau \in \starr^1(\sigma)$. For any $i \in \tau \setminus \sigma$
we have $\overline{\mathbf{v}}_i = c_i \mathbf{e}_{\tau}$ for some
integer $c_i > 0$, where $\overline{\mathbf{v}}_i$ denotes the image
of $\mathbf{v}_i$ in $(N/N_{\sigma}) \otimes \mathbb R$.

Fix $\sigma \in \Delta$, a Cartier divisor $D = \sum a_i D_i$, and
choose $u(\sigma) \in M$ satisfying $\langle u(\sigma), \mathbf{v}_i
\rangle = -a_i$.  We will use the following formula for
$i_{\sigma}^*([D])$:
\begin{equation} \label{e:pullback}
i_{\sigma}^*(D)
= \sum_{\tau \in \starr^1{\sigma}} b_{\tau} [D_{\tau}],
\end{equation}
 where $b_{\tau} = (a_i+\langle \mathbf{u}(\sigma), \mathbf{v}_i
 \rangle )/c_i$ for any $i \in \tau \setminus \sigma$. This is
 independent of the choice of $i$; see \cite[p. 97]{Fulton}.

 A function $\psi \colon |\Delta| \rightarrow \mathbb
R$ linear on the cones of $\Delta$ is {\em convex on
$\starr^1(\sigma)$} if the inequality  $\psi(\sum_{i=1}^l \mathbf{u}_i) \leq
\sum_{i=1}^l \psi(\mathbf{u}_i)$ holds for all
$\mathbf{u}_1,\dots,\mathbf{u}_l \in \starr^1(\sigma)$ with
$\sum_{i=1}^l \mathbf{u}_i \in \sigma$.

\begin{proposition} \label{p:pisigmastar}
Fix $[D] \in \Pic(X_{\Delta})$ with $D=\sum_{i \in \Delta(1)}  a_i D_i$,
and $\sigma \in \Delta$.  Let $i_{\sigma} \colon V(\sigma)
\rightarrow X_{\Delta}$ be the inclusion map.
Then the following are equivalent:

\begin{enumerate}
\item \label{i:sigmaeffective}$i_{\sigma}^*([D])$ is effective;

\item \label{i:sigmapos} $[D]=[D']$, where $D'=\sum a'_i D_i$ with $a'_i=0$ for $i \in
\sigma$ and $a'_i \geq 0$ for $i \in \starr^1(\sigma)$;

\item \label{i:piecewisesigmacone} The piecewise linear function $\psi_D
\colon |\Delta| \rightarrow \mathbb R$ defined by setting
$\psi_D(\mathbf{v}_i)=a_i$ and extending to be linear on each cone
$\tau \in \Delta$ is convex on $\starr^1(\sigma)$;

\item \label{i:sigmainequal} $\sum_i a_i b_i \geq 0$ for all $\mathbf{b}=(b_i)$ with
$\sum_{i \in \Delta(1)}b_i \mathbf{v}_i =0$ such that $b_i = 0$
for $i \not \in \starr^1(\sigma)$, and $b_i \geq 0$ for $i \in
\starr^1(\sigma) \setminus \sigma$.

\setcounter{saveenum}{\value{enumi}}
\end{enumerate}

\end{proposition}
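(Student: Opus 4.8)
The plan is to establish the chain \ref{i:sigmaeffective}~$\Leftrightarrow$~\ref{i:sigmapos}, followed by \ref{i:sigmapos}~$\Rightarrow$~\ref{i:piecewisesigmacone}~$\Rightarrow$~\ref{i:sigmainequal}~$\Rightarrow$~\ref{i:sigmapos}, so that condition~\ref{i:sigmapos} plays the role of a hub, exactly as condition~\ref{i:Gasintersection} does in Proposition~\ref{p:Gcone}. First I would make one normalization: since $D$ is Cartier we may replace $D$ by $D+\divv(\chi^{\mathbf{u}(\sigma)})$ and thereby assume $a_i=0$ for every $i\in\sigma$. This affects none of the four conditions --- for \ref{i:sigmainequal} because $\langle\mathbf{u}(\sigma),\sum_ib_i\mathbf{v}_i\rangle=0$, and for \ref{i:piecewisesigmacone} because convexity on $\starr^1(\sigma)$, like effectivity in \ref{i:sigmaeffective} and membership of the class in \ref{i:sigmapos}, is insensitive to the addition of a global linear function. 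I would also record the standard toric fact that a torus-invariant $\mathbb R$-divisor class $\sum_\tau b_\tau[D_\tau]$ on the toric variety $V(\sigma)$ is effective if and only if there is $\bar{\mathbf{u}}$ in the dual of $(N/N_\sigma)_{\mathbb R}$ with $b_\tau+\langle\bar{\mathbf{u}},\mathbf{e}_\tau\rangle\ge0$ for all $\tau\in\starr^1(\sigma)$ (see \cite{Fulton}).

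Granting the normalization, the pullback formula~\eqref{e:pullback} with $\mathbf{u}(\sigma)=0$ reads $i_\sigma^*(D)=\sum_\tau(a_i/c_i)[D_\tau]$, so \ref{i:sigmapos}~$\Rightarrow$~\ref{i:sigmaeffective} is immediate, and the converse follows by lifting $\bar{\mathbf{u}}$ from the effectivity criterion to $\mathbf{u}\in M_{\mathbb R}$ vanishing on $N_\sigma$ and taking $D'=D+\divv(\chi^{\mathbf{u}})$, which then has $a'_i=0$ on $\sigma$ and $a'_i=c_i(b_\tau+\langle\bar{\mathbf{u}},\mathbf{e}_\tau\rangle)\ge0$ on $\starr^1(\sigma)\setminus\sigma$. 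For \ref{i:sigmapos}~$\Rightarrow$~\ref{i:piecewisesigmacone}, after replacing $\psi_D$ by the piecewise linear function with values $a'_i$, I would expand any $\mathbf{u}_1,\dots,\mathbf{u}_l\in\starr^1(\sigma)$ with $\sum_j\mathbf{u}_j\in\sigma$ in terms of the $\mathbf{v}_i$ along the cones of $\starr^1(\sigma)$ containing them, use linearity of $\psi_D$ on those cones together with nonnegativity of $a'_i$ on $\starr^1(\sigma)$ to get $\sum_j\psi_D(\mathbf{u}_j)\ge0$, and note $\psi_D(\sum_j\mathbf{u}_j)=0$ since $\psi_D$ is linear on $\sigma$ and vanishes on each $\mathbf{v}_i$, $i\in\sigma$. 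For \ref{i:piecewisesigmacone}~$\Rightarrow$~\ref{i:sigmainequal}, given $\mathbf{b}$ as there I would split $\sum_ib_i\mathbf{v}_i=0$ into its positive and negative parts; the negative part is supported on $\sigma$, so $\mathbf{w}:=\sum_{b_i>0}b_i\mathbf{v}_i$ lies in $\sigma$, and applying convexity on $\starr^1(\sigma)$ to $\{b_i\mathbf{v}_i:b_i>0\}$ while evaluating $\psi_D(\mathbf{w})$ through linearity on $\sigma$ yields exactly $\sum_ia_ib_i\ge0$.

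The only step with genuine content is \ref{i:sigmainequal}~$\Rightarrow$~\ref{i:sigmapos}, which I would deduce from the affine form of Farkas's lemma. With $a_i=0$ on $\sigma$, condition~\ref{i:sigmapos} amounts to finding $\mathbf{u}\in M_{\mathbb R}$ with $\langle\mathbf{u},\mathbf{v}_i\rangle=0$ for $i\in\sigma$ and $\langle\mathbf{u},\mathbf{v}_i\rangle\ge-a_i$ for $i\in\starr^1(\sigma)\setminus\sigma$, and then putting $D'=D+\divv(\chi^{\mathbf{u}})$. By Farkas, such a $\mathbf{u}$ exists unless there is a vector $\mathbf{b}$ supported on $\starr^1(\sigma)$, nonnegative on $\starr^1(\sigma)\setminus\sigma$ and of arbitrary sign on $\sigma$, with $\sum_ib_i\mathbf{v}_i=0$ and $\sum_ia_ib_i<0$ --- precisely the negation of~\ref{i:sigmainequal}. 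I expect the main obstacle to be bookkeeping rather than ideas: writing this linear program in the right form, with the rays of $\sigma$ giving equality constraints and the remaining rays of $\starr^1(\sigma)$ giving inequalities, reading off the dual certificate as a genuine $\mathbf{b}$ of the type appearing in~\ref{i:sigmainequal}, and observing that rationality of the data lets $\mathbf{u}$ be chosen rational so that $[D]=[D']$ is an honest equality of classes. Closing the cycle then completes the proof.
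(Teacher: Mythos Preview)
Your proposal is correct and follows essentially the same route as the paper for the implications \ref{i:sigmaeffective}~$\Rightarrow$~\ref{i:sigmapos}~$\Rightarrow$~\ref{i:piecewisesigmacone}~$\Rightarrow$~\ref{i:sigmainequal}; the arguments you sketch match the paper's almost step for step, including the normalization $a_i=0$ on $\sigma$ and the positive/negative splitting for \ref{i:piecewisesigmacone}~$\Rightarrow$~\ref{i:sigmainequal}.

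The one genuine difference is how you close the cycle. The paper proves \ref{i:sigmainequal}~$\Rightarrow$~\ref{i:sigmaeffective} directly: given any $\tilde{\mathbf{b}}\ge0$ with $\sum_\tau\tilde b_\tau\mathbf{e}_\tau=0$ (a facet normal of the effective cone of $V(\sigma)$), it lifts $\tilde{\mathbf{b}}$ to a vector $\mathbf{b}$ of the type in~\ref{i:sigmainequal} and then checks by hand that $\sum_\tau\tilde b_\tau d_\tau=\sum_i a_ib_i\ge0$, so $i_\sigma^*(D)$ lies in the effective cone. You instead prove \ref{i:sigmainequal}~$\Rightarrow$~\ref{i:sigmapos} by invoking Farkas's lemma to produce the linear functional $\mathbf{u}$. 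These are two packagings of the same LP duality: the paper verifies the dual inequalities explicitly, while you appeal to the black-box alternative. Your route is a bit cleaner conceptually and avoids the somewhat fiddly construction of $\mathbf{b}$ from $\tilde{\mathbf{b}}$; the paper's route has the minor advantage of being self-contained. The remark about rationality of $\mathbf{u}$ is unnecessary, since both the paper and the statement are content with $\mathbf{u}\in M_{\mathbb R}$ and real coefficients $a'_i$.
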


\begin{proof}
{\bf \ref{i:sigmaeffective} $\rightarrow$ \ref{i:sigmapos} }:  
Suppose $i_{\sigma}^*([D])$ is effective.  We may assume that the
representative for $D$ has been chosen so that $a_i = 0$ for $i \in
\sigma$ (by replacing $D$ by $D+\divv(\chi^{\mathbf{u}(\sigma)})$), so
by Equation~\ref{e:pullback} we have $i_{\sigma}^*([D]) = \sum_{\tau \in
  \starr^1(\sigma)} b_{\tau} D_{\tau}$, where $b_{\tau} = a_i/c_i$ for
any $i \in \tau \setminus \sigma$, and $\overline{\mathbf{v}}_i = c_i
\mathbf{e}_{\tau} \in (N/N_{\sigma})\otimes \mathbb{R}$.
Since $i_{\sigma}^{*}([D])$ is effective there is a representative
$i_{\sigma}^*([D]) = \sum b'_{\tau} D_{\tau}$ with $b'_{\tau} \geq 0$ for all
$\tau \in \starr^1(\sigma)$.  This means that there is $\mathbf{u} \in
\Hom(N/N_{\sigma},\mathbb{R})$ with $b'_{\tau} =
b_{\tau} + \langle \mathbf{u}, \mathbf{e}_{\tau} \rangle$.  Let
$\tilde{\mathbf{u}}$ be the lift of $\mathbf{u}$ to $M_{\mathbb R}$
with $\langle \tilde{\mathbf{u}},\mathbf{v}_i \rangle = 0$ for $i \in
\sigma$.  Let $D' = D + \divv(\chi^{\tilde{\mathbf{u}}})$.  Then $D' =
\sum a'_i D_i$, where by construction $a'_i=0$ for $i \in \sigma$.  In
addition, for $i \in \tau \setminus \sigma$ with $\tau \in
\starr^1(\sigma)$ we have $a'_i = a_i +c_i \langle \mathbf{u},
\mathbf{e}_{\tau} \rangle = c_i b'_{\tau} \geq 0$, so $D'$ has the
desired form.

{\bf \ref{i:sigmapos} $\rightarrow$ \ref{i:piecewisesigmacone}}:
Note first that if $D' = D + \divv(\chi^{\mathbf{u}})$ for some
$\mathbf{u} \in M$ then $\psi_{D'}(\mathbf{v}) = \psi_D(\mathbf{v}) +
\langle \mathbf{u}, \mathbf{v} \rangle$.  Thus $\psi_D$ is convex on
$\starr^1(\sigma)$ if and only if $\psi_{D'}$ is.  Suppose now that
$[D]=[D']$ for $D'= \sum a'_i D_i$ with $a_i = 0$ for $i \in \sigma$
and $a_i \geq 0$ for $i \in \starr^1(\sigma)$.  Let
$\mathbf{u}_1,\dots, \mathbf{u}_l \in \starr^1(\sigma)$ with
$\sum_{i=1}^l \mathbf{u}_i \in \sigma$.  Then $\psi_{D'}(\mathbf{u}_i)
\geq 0$ for all $i$, and $\psi_{D'}(\sum \mathbf{u}_i) = 0$, so
$\psi_{D'}$ is convex on $\starr^1(\sigma)$.  

{\bf \ref{i:piecewisesigmacone} $\rightarrow$ \ref{i:sigmainequal}}:
Suppose that $\psi_D$ is convex on $\starr^1(\sigma)$.  By
replacing $D$ by $D + \divv(\chi^{\mathbf{u}(\sigma)})$ we may assume
that $\psi_D(\mathbf{v}) = 0$ for all $\mathbf{v} \in \sigma$.  Let
$\mathbf{b} \in \mathbb R^{|\Delta(1)|}$ satisfy $\sum b_i
\mathbf{v}_i = 0$, $b_i =0 $ for $i \not \in \starr^1(\sigma)$, and
$b_i \geq 0$ for $i \in \starr^1(\sigma) \setminus \sigma$.  Let
$\mathbf{u}_i=b_i \mathbf{v}_i$ for $i \in \starr^1(\sigma) \setminus
\sigma$, and $\mathbf{u}_0=\sum_{j \in \sigma, b_j >0} b_j
\mathbf{v}_j$.  Then $\mathbf{u}_i \in \starr^1(\sigma)$ for all $i$,
and $\mathbf{u}_0+ \sum_{i \in \starr^1(\sigma) \setminus \sigma}
\mathbf{u}_i = \sum_{j \in \sigma, b_j <0} (-b_j) \mathbf{v}_j \in
\sigma$, so since $\psi_D$ is convex on $\starr^1(\sigma)$, we have
$\sum \psi_D(\mathbf{u}_i) \geq \psi_D(\sum_{j \in \sigma, b_j <0}
(-b_j) \mathbf{v}_j ) = 0$.  Now $\psi_D(\mathbf{u}_i) = b_i a_i$ for
$i \in \starr^1(\sigma) \setminus \sigma$, and $\psi_D(\mathbf{u}_0)
=0$, so $\sum_{i \in \Delta(1)} a_i b_i = \sum_{i \in \starr^1(\sigma) \setminus \sigma}
a_i b_i \geq 0$ as required.

{\bf{ \ref{i:sigmainequal} $\rightarrow$ \ref{i:sigmaeffective}}}:
Suppose that $\sum_i a_ib_i \geq 0$ for all $\mathbf{b} \in \mathbb
R^{|\Delta(1)|}$ with $\sum b_i \mathbf{v}_i =0$, $b_i \geq 0$ for $i
\not \in \sigma$ and $b_i = 0 $ for $ i \not \in \starr^1(\sigma)$.
Let $i_{\sigma}^*(D) = \sum d_{\tau} D_{\tau}$.  To show that $i_{\sigma}^*(D)$ is
effective, it suffices to show that it lies on the correct side of all
facet-defining hyperplanes of the effective cone, and thus that
$\sum_{\tau} \tilde{b}_{\tau} d_{\tau} \geq 0$ for all choices of
$\tilde{b}_{\tau} \geq 0$ with $\sum_{\tau \in \starr^1(\sigma)}
\tilde{b}_{\tau} \mathbf{e}_{\tau} =0$.  Given such a vector
$\tilde{b}$, we construct $\mathbf{b} \in \mathbb R^{|\Delta(1)|}$
with $\sum b_i \mathbf{v}_i =0$ as follows.  For each $\tau \in
\starr^1(\sigma)$ choose $i \in \tau \setminus \sigma$, and set
$b_i=\tilde{b}_{\tau}/c_i$, where as above $\overline{\mathbf{v}}_i =
c_i \mathbf{e}_{\tau}$.  Set $b_j=0$ for all other $j \in \tau
\setminus \sigma$, and for $j \not \in \starr(\sigma)$.  We then have
$\sum_{\tau \in \starr^1(\sigma)} \sum_{i \in \tau \setminus \sigma}
b_i \mathbf{v}_i \in N_{\sigma}$.  Choose $b_j \in \mathbb Z$ so that
this sum is $\sum_{j \in \sigma} -b_j \mathbf{v}_j$.  Then by
construction $\sum b_i \mathbf{v}_i =0$, $b_i \geq 0$ for $i \not \in
\sigma$, and $b_i=0$ for $i \not \in \starr(\sigma)$.  Thus $\sum a_i
b_i \geq 0$.  Now $d_{\tau} = (a_i+\langle \mathbf{u}(\sigma),
\mathbf{v}_i\rangle)/c_i$ so $\sum_{\tau} \tilde{b}_{\tau} d_{\tau} =
\sum_{\tau} \sum_{i \in \tau \setminus \sigma} b_ic_i(a_i + \langle
\mathbf{u}(\sigma), \mathbf{v}_i \rangle )/c_i = \sum_{i \in
  \starr^1(\sigma)\setminus \sigma} b_ia_i + \sum_{i \in
  \starr^1(\sigma)\setminus \sigma} b_i \langle \mathbf{u}(\sigma),
\mathbf{v}_i \rangle$.  Since $\sum b_i \mathbf{v}_i=0$, we have
$\sum_{i \in \starr^1(\sigma)\setminus \sigma)} b_i \langle
\mathbf{u}(\sigma), \mathbf{v}_i \rangle = \sum_{j \in \sigma} b_j
a_j$.  Thus $\sum_{\tau} \tilde{b}_{\tau} d_{\tau} = \sum a_i b_i \geq
0$, so $i_{\sigma}^*(D)$ is effective.

\end{proof}

\begin{corollary} \label{c:Lcone}
Fix $[D] \in \Pic(X_{\Delta})$ with $D=\sum_{i \in \Delta(1)} a_i D_i$.
Then the following are equivalent:

\begin{enumerate}
\item \label{i:defnLcone} $[D] \in \L$;

\item \label{i:modifiedGcone} $[D] \in \bigcap_{\sigma \in \Delta^{\circ}}
\pos( [D_i], \pm [D_j] : i \in \starr^1(\sigma) \setminus \sigma, j
\in \Delta(1) \setminus \starr^1(\sigma))$;

\item \label{i:piecewiseLcone} The piecewise linear function $\psi
\colon |\Delta| \rightarrow \mathbb R$ defined by setting
$\psi(\mathbf{v}_i)=a_i$ and extending to be linear on each cone
$\sigma \in \Delta$ is convex on $\starr^1(\sigma)$ for all $\sigma
\in \Delta^{\circ}$;

\item \label{i:inequalLcone} $\sum_i a_i b_i \geq 0$ for all
  $\mathbf{b}=(b_i)$ with $\sum_i b_i \mathbf{v}_i =0$ such that there
  is $\sigma \in \Delta$ with $b_i = 0$ for $i \not \in
  \starr^1(\sigma)$, and $b_i \geq 0$ for $i \in
  \starr^1(\sigma)\setminus \sigma$.

\setcounter{saveenum}{\value{enumi}}
\end{enumerate}

\end{corollary}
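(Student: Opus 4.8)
The plan is to derive Corollary~\ref{c:Lcone} from Proposition~\ref{p:pisigmastar} by intersecting over the relevant cones $\sigma$ and unwinding the definition of $\L$. Recall that by the Remark following Definition~\ref{d:Lcone}, $[D] \in \L$ if and only if $i_{\sigma}^*([D])$ is effective for every $\sigma \in \Delta^{\circ}$ (the maximal cones contribute vacuous conditions). So the equivalence \ref{i:defnLcone} $\Leftrightarrow$ the conjunction over all $\sigma \in \Delta^{\circ}$ of condition \ref{i:sigmaeffective} of Proposition~\ref{p:pisigmastar} is immediate, and each of the remaining three conditions of the Corollary is visibly the conjunction over $\sigma \in \Delta^{\circ}$ of the corresponding condition \ref{i:sigmapos}, \ref{i:piecewisesigmacone}, \ref{i:sigmainequal} of the Proposition. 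Thus the entire Corollary follows once we check that taking the intersection/conjunction commutes with each of those equivalences — which it does, since Proposition~\ref{p:pisigmastar} is proved for each fixed $\sigma$ separately.

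Concretely, I would argue as follows. For \ref{i:defnLcone} $\Leftrightarrow$ \ref{i:piecewiseLcone}: $[D] \in \L$ iff $i_\sigma^*([D])$ is effective for all $\sigma \in \Delta^{\circ}$ iff (by \ref{i:sigmaeffective} $\Leftrightarrow$ \ref{i:piecewisesigmacone} of the Proposition) $\psi_D$ is convex on $\starr^1(\sigma)$ for all $\sigma \in \Delta^{\circ}$, which is exactly \ref{i:piecewiseLcone}. (One should note that the piecewise linear function $\psi$ in the Corollary is the same $\psi_D$ appearing in the Proposition, and that its convexity on $\starr^1(\sigma)$ is a property of the class $[D]$, not the representative, as established in the proof of \ref{i:sigmapos} $\rightarrow$ \ref{i:piecewisesigmacone}.) For \ref{i:defnLcone} $\Leftrightarrow$ \ref{i:inequalLcone}: condition \ref{i:sigmainequal} of the Proposition, for a fixed $\sigma$, asserts $\sum_i a_i b_i \geq 0$ for all $\mathbf{b}$ supported on $\starr^1(\sigma)$, nonnegative on $\starr^1(\sigma)\setminus\sigma$, with $\sum b_i \mathbf{v}_i = 0$; ranging over all $\sigma \in \Delta^{\circ}$ and taking the conjunction gives precisely \ref{i:inequalLcone}. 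Here one should remark that enlarging the range of $\sigma$ from $\Delta^{\circ}$ to all of $\Delta$ does not change the condition, since for $\sigma$ maximal $\starr^1(\sigma)\setminus\sigma = \emptyset$ and the only $\mathbf{b}$ allowed is supported on $\sigma$ itself — but then the corresponding inequalities are already covered by smaller faces (or one simply keeps the range as $\Delta^{\circ}$, matching \ref{i:piecewiseLcone}); I would phrase \ref{i:inequalLcone} and the argument so these agree, citing the Remark for why maximal cones are irrelevant.

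The one genuinely new point, and the place I expect to spend the most care, is the equivalence \ref{i:defnLcone} $\Leftrightarrow$ \ref{i:modifiedGcone}, giving $\L$ as an intersection of cones in $\Pic(X_\Delta)_{\mathbb R}$. By \ref{i:sigmaeffective} $\Leftrightarrow$ \ref{i:sigmapos} of the Proposition, $i_\sigma^*([D])$ is effective iff $[D]$ has a representative $\sum a_i' D_i$ with $a_i' = 0$ for $i \in \sigma$ and $a_i' \geq 0$ for $i \in \starr^1(\sigma) \setminus \sigma$ — with no constraint on $a_i'$ for $i \in \Delta(1) \setminus \starr^1(\sigma)$, which accounts for the $\pm[D_j]$ generators, and with the $i \in \sigma$ coordinates forced to zero, which is automatic once we also allow $\pm[D_i]$ for $i \in \sigma \subseteq \starr^1(\sigma)$... here I must be careful: the generators in \ref{i:modifiedGcone} are $[D_i]$ for $i \in \starr^1(\sigma)\setminus\sigma$ and $\pm[D_j]$ for $j \notin \starr^1(\sigma)$, and there is no generator at all for $i \in \sigma$. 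So I need to observe that requiring $a_i' = 0$ for $i \in \sigma$ is not a loss: given any representative with $a_i' \geq 0$ on $\starr^1(\sigma)\setminus\sigma$ and arbitrary elsewhere, one can subtract $\divv(\chi^{\mathbf{u}(\sigma)})$ for a suitable $\mathbf{u}(\sigma) \in M_{\mathbb R}$ to kill the $\sigma$-coordinates while preserving the sign condition on $\starr^1(\sigma)\setminus\sigma$ — this is exactly the content of the proof of \ref{i:sigmaeffective} $\rightarrow$ \ref{i:sigmapos}. Conversely a class lying in $\pos([D_i], \pm[D_j] : \ldots)$ has such a representative by definition. Intersecting over $\sigma \in \Delta^{\circ}$ then yields \ref{i:modifiedGcone}. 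I would write this direction out in slightly more detail than the others, since it is where the translation between "has a representative of a prescribed form" and "lies in a positive hull of divisor classes (with some generators allowed negatively)" has to be made explicit, exactly as in the \ref{i:Gdefn} $\Leftrightarrow$ \ref{i:Gasintersection} step of Proposition~\ref{p:Gcone}.
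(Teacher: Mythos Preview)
Your proposal is correct and follows essentially the same approach as the paper: both derive the corollary from Proposition~\ref{p:pisigmastar} by taking the conjunction over all $\sigma \in \Delta^{\circ}$, with the only nontrivial step being the identification of condition~\ref{i:sigmapos} of the Proposition with the positive-hull description in~\ref{i:modifiedGcone}. The paper compresses this last point into a single sentence asserting the equality of the two sets, whereas you spell it out (and slightly overthink the $i\in\sigma$ coordinates---once a class lies in the positive hull there is already a representative with those coordinates zero, so no further adjustment by $\divv(\chi^{\mathbf{u}(\sigma)})$ is needed).
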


\begin{proof}
This follows directly from Proposition~\ref{p:pisigmastar}, since the set 
$\{[D] : [D]=[\sum a_i D_i] \\ \text{ with } a_i = 0 \text{ for } i \in
\sigma, a_i \geq 0 \text{ for } i \in \starr^1(\sigma) \}$ equals 
$\pos([D_i], \pm [D_j] : i \in \starr^1(\sigma) \setminus \sigma, j \in
\Delta(1) \setminus \starr^1(\sigma))$.

\end{proof}

\begin{remark}
Note that the convex function $\psi$ in Part~\ref{i:Gconepsi} of
Proposition~\ref{p:Gcone} is defined on all of $N_{\mathbb R}$, while
the function $\psi$ defined in Part~\ref{i:piecewiseLcone} of
Corollary~\ref{c:Lcone} is only defined on $|\Delta|$.  Also, the
first $\psi$ is required to be globally convex, while the second is
only locally convex (convex on $\starr^1(\sigma)$).

\begin{figure}
\center{\resizebox{8cm}{!}{\input{Hirzebruch.pdftex_t}}}
\caption{The cones $\G$ and $\L$ for the punctured surface
  $\mathbb{F}_1$. \label{f:Hirzebruch}}
\end{figure} 

To see this second difference, let $X_{\Delta}$ be the Hirzebruch
surface $\mathbb F_1$ with the four torus invariant points removed.
Name the torus invariant prime divisors $D_1,\dots,D_4$ as in
Figure~\ref{f:Hirzebruch}.  Then the Picard group is generated by
$[D_1]=[D_3]$ and $[D_4]$.  The cone $\mathcal{G}_{\Delta}$ is
$\bigcap_{i=1}^4 \pos([D_j]: j \neq i ) = \pos([D_1], [D_4])$.  The
cone $\L$ is equal to the effective cone $\pos([D_1],[D_2])$ of
$X_{\Delta}$, so $\G \subsetneq \L$.  This is illustrated in
Figure~\ref{f:Hirzebruch}.
\end{remark}

For the last cone we assume that every maximal cone in $\Delta$ has
dimension $d$.

\begin{definition}  \label{d:Fdef}
Fix $\mathbf{w} \in \Hom(A_{n-d}(X_{\Delta}), \mathbb R)$ and let
\begin{align*}
\Uw & = \{ D \in
\Pic(X_{\Delta})_{\mathbb{R}} : \mathbf{w}( [D] \cdot [V(\tau)] ) \geq 0 \ \ 
\text{ for all } \tau \in \Delta(d-1) \} \\
 & = \{ D\in \Pic(X_{\Delta})_{\mathbb R} : \sum
a^{\tau}_{\sigma} w_{\sigma} \geq 0 \text{ for all } \tau \in
\Delta(d-1)\}, 
\end{align*}
where $[D] \cdot [V(\tau)] = \sum_{\sigma \in \Delta(d)}
a^{\tau}_{\sigma} [V(\sigma)]$, and $w_{\sigma} =
\mathbf{w}(V(\sigma))$.
\end{definition}

\begin{definition} \label{d:Wdefn}
Let $W = \{ \mathbf{w} \in \Hom(A_{n-d}(X_{\Delta}), \mathbb R) : \mathbf{w}(V(\sigma)) \geq 0 \text{ for all } \sigma \in \Delta(d) \}$.  Let
$$\U = \bigcap_{\mathbf{w} \in W} \Uw.$$
\end{definition}

When $A_{n-d}(X_{\Delta}) \cong \mathbb Z$, then $\U = \Uw$ for all
$\mathbf{w} \in W$.  This is the case for many specific $\Delta$ of
interest; see Propositions~\ref{p:delpezzo} and \ref{chowisomorphism}.
We will use the following equivalent descriptions of $\U$.

\begin{proposition}  \label{p:Ucone}
Fix $[D] \in \Pic(X_{\Delta})$ with $D=\sum_{i \in \Delta(1)} a_i D_i$.
Then the following are equivalent:

\begin{enumerate}
\item \label{i:defnUcone} $[D] \in \U$;

\item \label{i:Upullbackeffective} $i_{\tau}^*(D)$ is effective for
the inclusions $i_{\tau} : V(\tau) \rightarrow X_{\Delta}$ with
$\tau \in \Delta(d-1)$;

\item \label{i:modifiedGconeforU} $[D] \in \bigcap_{\sigma \in
\Delta(d-1)} \pos( [D_i], \pm [D_j] : i \in \starr^1(\sigma) \setminus
\sigma, j \in \Delta(1) \setminus \starr^1(\sigma))$;

\item \label{i:inequalUcone} $\sum_i a_i b_i \geq 0$ for all
$\mathbf{b}=(b_i)$ with $\sum b_i \mathbf{v}_i =0$ such that there is $\sigma
\in \Delta(d-1)$ with $b_i = 0$ for $i \not \in \starr^1(\sigma)$, and $b_i
\geq 0$ for $i \in \starr^1(\sigma)\setminus \sigma$;

\item \label{i:piecewiseUcone} The piecewise linear function $\psi
\colon |\Delta| \rightarrow \mathbb R$ defined by setting
$\psi(\mathbf{v}_i)=a_i$ and extending to be linear on each cone
$\sigma \in \Delta$ is convex on $\starr^1(\sigma)$ for all $\sigma
\in \Delta(d-1)$.
\setcounter{saveenum}{\value{enumi}}
 \end{enumerate}
\end{proposition}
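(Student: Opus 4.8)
The plan is to derive Proposition~\ref{p:Ucone} by reusing the machinery already established, rather than proving the five equivalences from scratch. The key observation is that the conditions (\ref{i:Upullbackeffective}), (\ref{i:modifiedGconeforU}), (\ref{i:inequalUcone}), and (\ref{i:piecewiseUcone}) are, for each fixed $\sigma \in \Delta(d-1)$, exactly the conditions appearing in Proposition~\ref{p:pisigmastar} (items (\ref{i:sigmaeffective}), (\ref{i:sigmapos})/(\ref{i:modifiedGcone})-style, (\ref{i:sigmainequal}), and (\ref{i:piecewisesigmacone})), intersected over $\sigma \in \Delta(d-1)$. So once we show (\ref{i:defnUcone}) $\Leftrightarrow$ (\ref{i:Upullbackeffective}), the remaining equivalences (\ref{i:Upullbackeffective}) $\Leftrightarrow$ (\ref{i:modifiedGconeforU}) $\Leftrightarrow$ (\ref{i:inequalUcone}) $\Leftrightarrow$ (\ref{i:piecewiseUcone}) follow immediately by applying Proposition~\ref{p:pisigmastar} to each $\tau \in \Delta(d-1)$ and intersecting — this is the analogue of how Corollary~\ref{c:Lcone} was deduced, and I would state it as such, pointing to the corollary's proof as a template.

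The substantive step is therefore (\ref{i:defnUcone}) $\Leftrightarrow$ (\ref{i:Upullbackeffective}), i.e.\ that $\bigcap_{\mathbf{w} \in W} \Uw$ equals the set of $[D]$ whose pullback to every codimension-one orbit closure $V(\tau)$, $\tau \in \Delta(d-1)$, is effective. Fix $\tau \in \Delta(d-1)$. The toric variety $V(\tau)$ is a complete toric curve (since $\Delta$ is pure of dimension $d$ and satisfies that every maximal cone has dimension $d$, $\starr(\tau)$ has top cones of dimension $d$, so $V(\tau)$ has dimension $1$ and is complete — here I would invoke that $\Delta \subseteq \Sigma$ for a complete $\Sigma$, or rather note completeness of $V(\tau)$ follows from $\tau$ being a face of maximal cones only of dimension $d$). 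On a complete toric curve, a divisor class is effective if and only if its degree is nonnegative, and the degree is computed as $\sum_{\sigma \in \Delta(d) : \tau \subset \sigma} a^\tau_\sigma$ up to the weighting by lattice indices; more precisely $i_\tau^*([D]) = [D] \cdot [V(\tau)]$ as a $0$-cycle class on $V(\tau)$ is effective iff $\mathbf{w}([D]\cdot[V(\tau)]) \ge 0$ for the specific $\mathbf{w}$ that sends each point class $[V(\sigma)]$ to $1$ — but we must be careful, since different $[V(\sigma)]$ may be identified in $A_{n-d}(X_\Delta)$. This is exactly why the definition quantifies over all $\mathbf{w} \in W$: effectiveness of $i_\tau^*([D])$ on $V(\tau)$ means $\sum a^\tau_\sigma [V(\sigma)]$ lies in the effective cone of $A_0(V(\tau))$, and the pushforward $A_0(V(\tau)) \to A_{n-d}(X_\Delta)$ together with the constraint $w_\sigma \ge 0$ for all $\sigma$ captures precisely the facet inequalities cutting out that effective cone after pushing forward.

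Concretely, I would argue the two inclusions as follows. For (\ref{i:Upullbackeffective}) $\Rightarrow$ (\ref{i:defnUcone}): if every $i_\tau^*([D])$ is effective, then for any $\mathbf{w} \in W$ the number $\mathbf{w}([D]\cdot[V(\tau)]) = \mathbf{w}(i_{\tau *}i_\tau^*([D]))$ is a nonnegative combination (coefficients $w_\sigma \ge 0$) of the coefficients of an effective $0$-cycle on $V(\tau)$, hence $\ge 0$; so $[D] \in \Uw$, and intersecting over $\mathbf{w}$ gives $[D] \in \U$. For the converse (\ref{i:defnUcone}) $\Rightarrow$ (\ref{i:Upullbackeffective}): suppose $i_\tau^*([D])$ is \emph{not} effective for some $\tau$. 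Then $\sum_{\sigma} a^\tau_\sigma [V(\sigma)]$ in $A_{n-d}(X_\Delta)$ lies outside the cone generated by $\{[V(\sigma)] : \sigma \in \Delta(d)\}$ — here one must check that non-effectiveness downstairs implies lying outside this positive hull upstairs, which uses that $A_0(V(\tau)) \to A_{n-d}(X_\Delta)$ sends the effective cone of the complete curve $V(\tau)$ into $\pos([V(\sigma)])$ compatibly; since the class is outside a rational polyhedral cone, a separating hyperplane gives a linear functional, nonnegative on all $[V(\sigma)]$ (hence an element of $W$) and strictly negative on $[D]\cdot[V(\tau)]$, contradicting $[D] \in \U$.

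The main obstacle I anticipate is the bookkeeping in this last separation argument: making precise the relationship between the effective cone of $0$-cycles on the complete curve $V(\tau)$ and the cone $\pos([V(\sigma)] : \sigma \in \Delta(d)) \subseteq A_{n-d}(X_\Delta)_{\mathbb R}$ under pushforward, and confirming that ``not effective upstairs against all $\mathbf{w} \in W$'' is genuinely equivalent to ``not effective on some $V(\tau)$'' — one direction is an honest separation argument and needs the target cone to be closed (rational polyhedral, hence fine). A secondary point needing care is the identification $\mathbf{w}([D]\cdot[V(\tau)]) = \sum_\sigma a^\tau_\sigma w_\sigma$ and that $[D]\cdot[V(\tau)]$ is computed by the projection formula as $i_{\tau*}(i_\tau^*[D])$, so that effectiveness of $i_\tau^*[D]$ on the curve really does control the sign of $\mathbf{w}([D]\cdot[V(\tau)])$; this is standard intersection theory on toric varieties but should be cited (e.g.\ \cite{Fulton}).
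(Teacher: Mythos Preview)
Your overall strategy matches the paper's exactly: reduce the equivalence of (\ref{i:Upullbackeffective})--(\ref{i:piecewiseUcone}) to Proposition~\ref{p:pisigmastar} applied cone-by-cone (just as in Corollary~\ref{c:Lcone}), and then argue (\ref{i:defnUcone}) $\Leftrightarrow$ (\ref{i:Upullbackeffective}) via a separation/duality argument between the cone $\pos([V(\sigma)]:\sigma\in\Delta(d))\subseteq A_{n-d}(X_\Delta)_{\mathbb R}$ and its dual $W$. The paper does precisely this, in both directions, in two short paragraphs.

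There is, however, a genuine misconception in your setup that you should correct before it propagates. The orbit closure $V(\tau)$ for $\tau\in\Delta(d-1)$ is \emph{not} a curve: it has dimension $n-\dim(\tau)=n-d+1$, which is typically $>1$ since the whole point of the paper is $d<n$. Moreover it is essentially never complete: its fan in $(N/N_\tau)_{\mathbb R}\cong\mathbb R^{\,n-d+1}$ consists only of the rays $\overline{\sigma}$ for $\sigma\in\starr^1(\tau)\cap\Delta(d)$ together with the origin (since $\Delta$ has no cones of dimension $>d$). Consequently $i_\tau^*([D])$ is a divisor class on $V(\tau)$, not a $0$-cycle, and the phrase ``effective $0$-cycle on $V(\tau)$'' and the appeal to ``degree on a complete curve'' are both incorrect. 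Fortunately your argument does not actually use these facts: once you drop that paragraph, what remains is the projection-formula identity $[D]\cdot[V(\tau)]=(i_\tau)_*\bigl(i_\tau^*[D]\bigr)$ together with the observation that $(i_\tau)_*$ carries the effective cone $\pos([D_\sigma]:\sigma\in\starr^1(\tau))\subseteq\Pic(V(\tau))_{\mathbb R}$ into $\pos([V(\sigma)]:\sigma\in\Delta(d))$, and a separating-hyperplane argument for the converse. That is the paper's proof. The ``main obstacle'' you flag---that non-effectiveness on $V(\tau)$ forces $[D]\cdot[V(\tau)]$ outside $\pos([V(\sigma)])$---is handled in the paper by simply noting that any representative $\sum a_\sigma[V(\sigma)]$ with $a_\sigma\ge 0$ pulls back to an effective representative $\sum_{\sigma\in\starr^1(\tau)} a_\sigma D_\sigma$ of $i_\tau^*([D])$; you should make this step explicit rather than leaving it as a concern.
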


\begin{proof}
The equivalence of Part~\ref{i:Upullbackeffective} with the following
ones is a direct corollary of Proposition~\ref{p:pisigmastar}, as in
Corollary~\ref{c:Lcone}, so we thus need only show the equivalence of
the first condition with the others.  

Suppose first that $i_{\tau}^*(D)$ is effective for all $\tau \in
\Delta(d-1)$.  Then for all such $\tau$, the class $[D] \cdot
[V(\tau)]$ lies in the cone generated by $\{ [V(\sigma)] : \sigma \in
\starr(\tau), \dim(\sigma)=d \}$.  Thus $\mathbf{w}([D] \cdot
[V(\tau)]) \geq 0$ for all $\mathbf{w} \in W$, so $D \in \U$.  

Conversely, suppose that $D \in \U$.  Then for all $\tau \in
\Delta(d-1)$ the class $[D] \cdot [V(\tau)]$ must have a representative
$\sum_{\sigma \in \Delta(d)} a_{\sigma} [V(\sigma)]$, where
$a_{\sigma} \geq 0$, as otherwise there would be $\mathbf{w} \in W$
with $\mathbf{w}([D] \cdot [V(\tau)] )<0$.  Since $i_{\tau}^*([D]) =
\sum_{\sigma \in \starr^1(\tau)} a_{\sigma} D_{\sigma}$, this implies
that $i_{\tau}^*([D])$ is effective.
\end{proof}

Recall that $\Delta^{\circ}$ is the fan obtained from $\Delta$ by removing the maximal cones.

\begin{proposition} \label{p:toriccontainment}
We have 
$$\G \subseteq \Gprime \subseteq \L,$$ 
and if $\Delta$ is a pure fan of dimension $d$ then 
$$\L \subseteq \U \subseteq \Uw.$$ If $\Delta$ is the
fan of a projective toric variety $X_{\Delta}$, then all cones
coincide, and are equal to $\nef(X_{\Delta})$.
\end{proposition}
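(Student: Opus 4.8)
The plan is to establish the chain of inclusions one link at a time, using in each case the combinatorial descriptions already proved in Proposition~\ref{p:Gcone}, Corollary~\ref{c:Lcone}, and Proposition~\ref{p:Ucone}. For the inclusion $\G \subseteq \Gprime$, I would use Part~\ref{i:Gdefn} of each: a globally generated divisor on $X_\Delta$ restricts to a globally generated, hence effective, divisor on each orbit closure $V(\sigma)$, since $V(\sigma)$ is itself a toric variety and pullback preserves global generation. Alternatively, and more in keeping with the combinatorial flavor of the section, one compares the $\psi$-characterizations: a function convex on all of $N_{\mathbb R}$ (Proposition~\ref{p:Gcone}\ref{i:Gconepsi}) is in particular convex on $\starr^1(\sigma)$ for every $\sigma$, which is exactly the condition in Corollary~\ref{c:Lcone}\ref{i:piecewiseLcone} defining $\Gprime$ — note that $\Gprime$, by its definition as $\mathcal{L}_{\Delta^\circ}$, is cut out by the convexity conditions on $\starr^1(\sigma)$ for $\sigma$ ranging over the cones of $\Delta^\circ$.

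For $\Gprime \subseteq \L$, the key observation is that $\L$ is defined by requiring $i_\sigma^*([D])$ effective for all $\sigma \in \Delta$, but by the Remark following Definition~\ref{d:Lcone} the conditions at maximal cones are vacuous, so $\L$ is cut out by the conditions at cones of $\Delta^\circ$ — and for $\sigma \in \Delta^\circ$, the orbit closure $V(\sigma)$ in $X_\Delta$ and in $X_{\Delta^\circ}$ have the same star data, so the effectivity condition is literally the same. Thus $\Gprime = \L$... but wait — one must be careful: $\Gprime = \mathcal{G}_{\Delta^\circ}$, the cone of globally generated divisors on $X_{\Delta^\circ}$, which by Corollary~\ref{c:Lcone} applied to $\Delta^\circ$ is contained in $\mathcal{L}_{\Delta^\circ}$; and $\mathcal{L}_{\Delta^\circ}$ maps to $\L$ under the identification of Picard groups because removing maximal cones does not change $\Delta(1)$ or the relevant relations. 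The clean way is: $\Gprime \subseteq \mathcal L_{\Delta^\circ}$ is the toric containment $\G \subseteq \L$ applied to the fan $\Delta^\circ$, and $\mathcal L_{\Delta^\circ} = \L$ by the Remark. For $\L \subseteq \U$ (assuming $\Delta$ pure of dimension $d$): compare Corollary~\ref{c:Lcone}\ref{i:inequalLcone} with Proposition~\ref{p:Ucone}\ref{i:inequalUcone} — the latter ranges the cone $\sigma$ over $\Delta(d-1)$ only, a subset of the cones of $\Delta^\circ$ (every cone of $\Delta(d-1)$ is non-maximal since $\Delta$ is pure of dimension $d$), so the defining inequalities for $\U$ form a subset of those for $\L$, giving $\L \subseteq \U$ immediately. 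The inclusion $\U \subseteq \Uw$ is trivial from Definition~\ref{d:Wdefn}, since $\U$ is an intersection over $\mathbf{w} \in W$ of the cones $\mathcal F_{\Delta, \mathbf w}$ and the specified $\mathbf w$ lies in $W$ (it lies in $W$ by hypothesis of the statement — or rather, $\Uw$ as it appears here is for $\mathbf w \in W$; in general $\U \subseteq \Uw$ only for $\mathbf w \in W$, which should be noted).

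For the final sentence, when $\Delta$ is the fan of a projective toric variety $X_\Delta$: here $\Delta$ is complete, so $\Delta = \Delta^\circ$ is false — rather, one uses that on a complete (projective) toric variety the nef cone is simultaneously described by global generation, by effectivity of pullbacks to all torus-invariant subvarieties, and by nonnegative intersection with all torus-invariant curves $V(\tau)$ for $\tau \in \Delta(n-1) = \Delta(d-1)$; these are the standard equivalent characterizations of nef on a complete toric variety (see \cite[pp.~66--70]{Fulton}). Since $\G$, $\L$, and $\Uw$ (for any $\mathbf w \in W$, e.g. $\mathbf w$ the class map, which on a complete toric variety records intersection numbers and lies in $W$ because $V(\sigma)$ are effective classes) are precisely these three cones, and all are squeezed between $\G$ and $\Uw$, they must all coincide with $\nef(X_\Delta)$. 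The one point needing a line of justification is that $W$ is nonempty and contains a $\mathbf w$ for which $\Uw = \nef(X_\Delta)$ — on a complete simplicial (or just complete, using numerical equivalence) toric variety $A_{n-d}(X_\Delta) = A_1(X_\Delta)$ pairs with $\Pic$, and taking $\mathbf w$ to be evaluation against this pairing identifies $\Uw$ with the cone of divisors nonnegative on all torus-invariant curves, which is $\nef(X_\Delta)$.

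The main obstacle I anticipate is purely bookkeeping: making sure the identifications of $\Pic(X_\Delta)_{\mathbb R}$, $\Pic(X_{\Delta^\circ})_{\mathbb R}$, and the various orbit-closure Picard groups are compatible, and that "removing maximal cones" genuinely leaves all the relevant combinatorial data ($\Delta(1)$, the relations $\sum b_i \mathbf v_i = 0$, the stars $\starr^1(\sigma)$ for non-maximal $\sigma$) untouched. None of these steps is deep; the content was already extracted into the three combinatorial propositions, and this proposition is essentially the observation that their defining inequality systems are nested.
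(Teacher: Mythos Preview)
Your treatment of $\L \subseteq \U \subseteq \Uw$ and of the projective case is correct and matches the paper's argument (the paper is slightly slicker on the projective endpoint: since $d=n$ one has $A_{n-d}(X_\Delta)=A_0(X_\Delta)\cong\mathbb Z$, so $\U=\Uw$ automatically and equals the cone of divisors nonnegative on all torus-invariant curves, i.e.\ $\nef(X_\Delta)$).

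The problem is the first chain $\G \subseteq \Gprime \subseteq \L$. You initially treat $\Gprime$ as $\mathcal L_{\Delta^\circ}$; it is $\mathcal G_{\Delta^\circ}$. You catch this, but the repaired route $\Gprime=\mathcal G_{\Delta^\circ}\subseteq \mathcal L_{\Delta^\circ}=\L$ still has two real issues. First, invoking ``the toric containment $\G\subseteq\L$ applied to the fan $\Delta^\circ$'' is the proposition you are proving, just for a different fan, so as written it is circular. Second, the asserted equality $\mathcal L_{\Delta^\circ}=\L$ is not generally true and is not what the Remark says: the Remark tells you that the conditions defining $\L$ are indexed by $\sigma\in\Delta^\circ$, but each condition is effectivity of the pullback to $V_\Delta(\sigma)$, computed using $\starr^1_\Delta(\sigma)$. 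The cone $\mathcal L_{\Delta^\circ}$ uses $V_{\Delta^\circ}(\sigma)$ and $\starr^1_{\Delta^\circ}(\sigma)$, which drops any $(\dim\sigma+1)$-cone that happens to be maximal in $\Delta$; these are genuinely different in general, and even the ambient Picard groups need not coincide.

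The paper avoids all of this by staying entirely inside $\Pic(X_\Delta)_{\mathbb R}$ and using the ``intersection of positive hulls'' descriptions. By Proposition~\ref{p:Gcone}\ref{i:Gasintersection}, $\G=\bigcap_{\sigma\in\Delta}\pos([D_i]:i\notin\sigma)$ and $\Gprime=\bigcap_{\sigma\in\Delta^\circ}\pos([D_i]:i\notin\sigma)$; since $\Delta^\circ\subset\Delta$, the first intersection has more terms, giving $\G\subseteq\Gprime$ immediately. For $\Gprime\subseteq\L$ one simply observes, for each fixed $\sigma\in\Delta^\circ$, the termwise containment
\[
\pos([D_i]:i\notin\sigma)\ \subseteq\ \pos\bigl([D_i],\pm[D_j]:i\in\starr^1(\sigma)\setminus\sigma,\ j\notin\starr^1(\sigma)\bigr),
\]
which together with Corollary~\ref{c:Lcone}\ref{i:modifiedGcone} gives $\Gprime\subseteq\L$. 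No passage to $X_{\Delta^\circ}$ is needed.
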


\begin{proof}
By Part~\ref{i:Gasintersection} of Proposition~\ref{p:Gcone} $\G$ is
the intersection of $\Gprime$ with some other cones, which gives the
first inclusion. Next, note that for $\sigma \in \Delta^{\circ}$, we have
$\pos([D_i] : i \not \in \sigma) \subseteq \pos( [D_i], \pm [D_j] : i
\in \starr^1(\sigma) \setminus \sigma, j \in \Delta(1) \setminus
\starr^1(\sigma))$, so $\Gprime \subseteq \L$ follows from
Part~\ref{i:Gasintersection} of Proposition~\ref{p:Gcone} and
Part~\ref{i:modifiedGcone} of Corollary~\ref{c:Lcone}.

Suppose now that $\Delta$ is pure of dimension $d$.  The inclusion $\L
\subseteq \U$ comes from Part~\ref{i:modifiedGconeforU} of
Proposition~\ref{p:Ucone}, since the intersection for $\U$ is over
only the $(d-1)$-dimensional cones of $\Delta$ rather than all cones 
as for $\L$.  The inclusion $\U \subseteq \Uw$ comes from the
definition of $\U$.  

If $\Delta$ is the fan of a projective toric variety then a divisor is
globally generated if and only if it is nef, so $\G=\nef(X_{\Delta})$.
For such $\Delta$ we have $d=n$, so $A_{n-d}(X_{\Delta}) \cong \mathbb
Z$, and thus $\U = \Uw = \{ D \in \Pic(X_{\Delta})_{\mathbb R} : [D]
\cdot [V(\tau)] \geq 0 \text{ for all } \tau \in \Delta(n-1) \}$.  The classes $\{V(\tau) : \tau \in \Delta(n-1) \}$ generate the Mori cone of curves of $X_{\Delta}$,
so we also have  $\U = \nef(X_{\Delta})$.  
\end{proof}

\section{Bounds for nef cones} \label{bounds}

In this section we prove the main theorem of this paper,
Theorem~\ref{t:bound}, which shows that given an appropriate embedding
of a projective variety $Y$ into $X_{\Delta}$, the pullbacks of the
cones $\G$ and  $\L$ give lower bounds for $\nef(Y)$, and the pullback of
the cone $\Uw$ for appropriate $\mathbf{w}$ gives an upper bound.

The upper bound requires some tropical geometry, which we first review briefly.

\begin{definition} \label{d:tropicalvariety}
Let $Y^0 \subset T$ be a subvariety of a torus $T\cong (\K^*)^n$.  Let
$K$ be an algebraically closed field extension of $\K$ with a
valuation $\val : K^* \rightarrow \mathbb R$ that is constant on $\K$
and with residue field isomorphic to $\K$.  The tropical variety
$\trop(Y^0)$ is equal as a set to the closure in $\mathbb R^n$ of 
$\{ (\val(y_1), \dots,\val(y_n) ) \in \mathbb R^n : (y_1,\dots,y_n) \in Y^0(K) \}$.
\end{definition}

By the structure theorem for tropical varieties, the set $\trop(Y^0)$
can be given the structure of a polyhedral fan of dimension
$\dim(Y^0)$ (\cite[Theorem 3.3.4]{TropicalBook}).  There are many
possible choices of fan structure.  We will fix one, which
we denote by $\Sigma'$, for which the initial ideal
$\inn_w(I(Y^0))$ is constant for all $w$ in the relative interior of a
cone, where $I(Y^0)$ is the ideal in $\K[T]$ defining $Y^0$.  For $w
\in \mathbb R^n$ the initial ideal $\inn_w(I(Y^0))$ is $\langle
\inn_w(f) : f \in I(Y^0) \rangle$, where for a Laurent polynomial
$f=\sum c_u x^u$ the initial form $\inn_w(f)$ is $\sum_{w \cdot u
  \text{ minimal } } c_u x^u$.

Given such a polyhedral fan structure $\Sigma'$ on $\trop(Y^0)$, we
associate to each top-dimensional cone $\sigma \in \Sigma'$ a positive
integer $m_{\sigma}$ as follows.  Choose $w$ in the relative interior
of $\sigma$; then $m_{\sigma} = \sum_{P \in \Ass(\inn_w(I(Y^0)))}
\mathrm{mult}(P, \inn_w(I(Y^0)))$, where $\Ass( \cdot)$ denotes the
set of associated primes of the ideal.  Let $\mathbf{m}=(m_{\sigma})$
be the vector of multiplicities as $\sigma$ varies over
top-dimensional cones.  The tropical variety of $Y^0$ is the pair
$(\trop(Y^0), \mathbf{m})$.  This is the ``constant coefficient'' case
of tropical geometry.  See \cite{TropicalBook} for more information.

Given any fan $\Delta$ with $|\Delta|=\trop(Y^0)$, where
$\dim(Y^0)=d$, we get an element $\mathbf{w} \in
\Hom(A_{n-d}(X_{\Delta}), \mathbb R)$ by setting
$\mathbf{w}(V(\sigma))$ equal to the weight $m_{\tau}$ on any
$d$-dimensional cone $\tau$ in $\Sigma'$ intersecting $\sigma$ in its
relative interior.  That this is well-defined follows from the
balancing condition on tropical varieties; see \cite[\S
  3.4]{TropicalBook}, \cite{FultonSturmfels}.

\begin{theorem} \label{t:bound}
Let $i \colon Y \rightarrow X_{\Delta}$ be an embedding of a
$d$-dimensional projective variety $Y$ into an $n$-dimensional normal
toric variety $X_{\Delta}$.  Then we have the following inclusions of
cones in $N^1(Y)_{\mathbb R}$:
$$i^*(\G) \subseteq i^*(\L)
\subseteq \nef({Y}). $$ 
If in addition we have
$\trop(Y^0) = |\Delta|$ for $Y^0 = Y \cap T$, and $i^* \colon
\Pic(X_{\Delta})_{\mathbb R} \rightarrow N^1(Y)_{\mathbb R}$ is
surjective, then
$$\nef(Y) \subseteq i^*(\Uw),$$ where $\mathbf{w} \in
\Hom(A_{n-d}(X_{\Delta}), \mathbb R)$ is as described above.
\end{theorem}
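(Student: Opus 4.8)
The plan is to establish the three inclusions separately, handling the two lower bounds with the combinatorial descriptions from Section~\ref{s:Cones} and the upper bound with tropical compactification techniques. For the lower bounds, it suffices by Proposition~\ref{p:toriccontainment} to show $i^*(\L) \subseteq \nef(Y)$, since $i^*(\G) \subseteq i^*(\L)$ follows by applying $i^*$ to $\G \subseteq \L$. So fix $[D] \in \L$; I want to show $i^*([D]) \cdot C \geq 0$ for every irreducible curve $C \subseteq Y$. The key point is that the orbit stratification of $X_{\Delta}$ induces a stratification of $Y$: for each $\sigma \in \Delta$, let $Y(\sigma) = i^{-1}(V(\sigma))$, with the smallest such stratum containing (a general point of) $C$ corresponding to some cone $\sigma_C$, so that $C$ meets the orbit $\mathcal{O}(\sigma_C)$. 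Then $i^*([D]) \cdot C = i_{\sigma_C}^*([D]) \cdot C'$ where $C' \subseteq V(\sigma_C)$ is the image of $C$ under the inclusion and the pullback is computed in $V(\sigma_C)$. Since $[D] \in \L$, the class $i_{\sigma_C}^*([D])$ is effective on $V(\sigma_C)$, and in fact (by Proposition~\ref{p:pisigmastar}\ref{i:sigmapos}) it is represented by a nonnegative combination of the torus-invariant divisors $D_\tau$ of $V(\sigma_C)$ whose rays come from $\starr^1(\sigma_C)$. Because $C$ meets the open torus orbit of $V(\sigma_C)$ — that is, $C'$ is not contained in any of those torus-invariant divisors $D_\tau$ — we get $D_\tau \cdot C' \geq 0$ for each $\tau$, hence $i_{\sigma_C}^*([D]) \cdot C' \geq 0$. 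This gives $i^*(\L) \subseteq \nef(Y)$.

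For the upper bound, assume now $\trop(Y^0) = |\Delta|$ and $i^*$ surjective, and let $\mathbf{w}$ be the tropical weight vector. I want to show $\nef(Y) \subseteq i^*(\Uw)$. Fix a nef class $\eta \in N^1(Y)_{\mathbb R}$; by surjectivity of $i^*$ write $\eta = i^*([D])$ for some $[D] \in \Pic(X_{\Delta})_{\mathbb R}$. I must show $[D] \in \Uw$, i.e.\ $\mathbf{w}([D] \cdot [V(\tau)]) \geq 0$ for every $\tau \in \Delta(d-1)$. The crucial input is the tropical compactification story: since $\trop(Y^0) = |\Delta|$, the closure $\overline{Y^0}$ in $X_{\Delta}$ is a tropical compactification, and by the balancing/multiplicity formalism reviewed before the theorem statement (see \cite[\S 3.4]{TropicalBook}, \cite{FultonSturmfels}, \cite{Tevelev}), the fundamental class $[\overline{Y^0}] \in A_{n-d}(X_{\Delta})$ of the $d$-dimensional subvariety is precisely the class represented by $\sum_{\sigma \in \Delta(d)} m_\sigma [V(\sigma)]$, which is to say it is the Minkowski weight corresponding to $\mathbf{w}$. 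Then for any $\tau \in \Delta(d-1)$, the number $\mathbf{w}([D] \cdot [V(\tau)])$ equals $[D] \cdot [V(\tau)] \cdot [\overline{Y^0}]$, computed in $X_\Delta$; pushing this down, it equals the degree of $i^*([D])$ against the one-cycle $[V(\tau)] \cap \overline{Y^0}$ on $Y$ (up to the usual intersection-theoretic identifications, and using that $Y$ is proper so the degree is defined). That one-cycle is effective — it is an honest intersection of $\overline{Y^0} = Y$ with the orbit closure $V(\tau)$, hence a nonnegative combination of curves on $Y$ — and $\eta = i^*([D])$ is nef, so the pairing is $\geq 0$. This yields $[D] \in \Uw$ and hence $\eta \in i^*(\Uw)$.

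The main obstacle is making the upper-bound argument rigorous: identifying $\mathbf{w}$ with the fundamental class $[\overline{Y^0}]$ under the isomorphism between $\Hom(A_{n-d}(X_\Delta),\mathbb{R})$ and Minkowski weights, and then interpreting $\mathbf{w}([D]\cdot[V(\tau)])$ as a genuine intersection number on $Y$ of a nef class with an effective curve class. One has to be careful that $X_\Delta$ is typically noncomplete and singular, so the pairing $A^1 \times A_{n-d} \to A_{n-d-1}$ and the projection to $Y$ need the properness of $Y$ (not of $X_\Delta$) to produce a well-defined degree; this is exactly where the hypothesis that $Y$ is projective is used, and where one invokes the compatibility of the Fulton--Sturmfels Minkowski-weight intersection product with classical intersection theory on the subvariety. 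The lower-bound argument, by contrast, is essentially bookkeeping with the orbit stratification and Proposition~\ref{p:pisigmastar}, the only subtlety being to check that a curve $C \subseteq Y$ genuinely meets the open orbit of the minimal stratum $V(\sigma_C)$ containing it, so that $C'$ avoids the relevant boundary divisors — which holds by the definition of $\sigma_C$ as the \emph{smallest} cone with $C \subseteq Y(\sigma_C)$.
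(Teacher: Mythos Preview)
Your lower-bound argument captures the paper's idea: for a curve $C \subseteq Y$, find the largest $\sigma$ with $C \subseteq V(\sigma)$ so that $C$ meets $\mathcal O_\sigma$, use Proposition~\ref{p:pisigmastar}\ref{i:sigmapos} to represent $i_\sigma^*([D])$ by an effective Cartier divisor $\sum_\tau b_\tau D_\tau$ on $V(\sigma)$, and note that since $C$ is not contained in its support the restriction to $C$ has nonnegative degree. The paper routes this through a complete fan $\Sigma \supseteq \Delta$ and the projection formula, in part because (as the Remark preceding the proof warns) $A_1(X_\Delta)$ and $A_0(X_\Delta)$ may vanish; your direct computation on the projective curve $C$ sidesteps that issue. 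One slip: you write ``$D_\tau \cdot C' \geq 0$ for each $\tau$,'' but the individual $D_\tau$ need not be Cartier on $V(\sigma)$; what you actually need (and have) is that the \emph{sum} $\sum b_\tau D_\tau$ is an effective Cartier divisor whose support misses a point of $C$.

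The upper-bound argument, however, has real gaps. First, $|\Delta| = \trop(Y^0)$ does \emph{not} make $Y \hookrightarrow X_\Delta$ a tropical compactification in Tevelev's sense, so your assertion that $Y \cap V(\tau)$ is ``an honest intersection'' of dimension one is unjustified for $\tau \in \Delta(d-1)$. Second, the line ``the fundamental class $[\overline{Y^0}] \in A_{n-d}(X_\Delta)$'' is a type error --- $[Y]$ lies in $A_d(X_\Delta)$ --- and more seriously there is no pairing $A_d \times A_{n-d} \to \mathbb Z$ on the noncomplete $X_\Delta$ that would let you read $\mathbf{w}(\cdot)$ as intersection with $[Y]$. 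The paper resolves both issues by passing to a smooth refinement $\widetilde{\Delta}$ for which $\widetilde{Y} \hookrightarrow X_{\widetilde{\Delta}}$ \emph{is} tropical, embedding further into a smooth projective $X_{\widetilde{\Sigma}}$ where the triple product $[\widetilde{Y}] \cdot [\widetilde D] \cdot [V(\tilde\tau)]$ is a genuine number, and invoking \cite[Lemma~3.2(1)]{SturmfelsTevelev} to identify that number with $\widetilde{\mathbf{w}}([\widetilde D] \cdot [V(\tilde\tau)])$; one then pushes forward along $\pi \colon X_{\widetilde{\Delta}} \to X_\Delta$ to recover the inequality for $\mathbf{w}$. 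You correctly flag ``making the upper-bound argument rigorous'' as the main obstacle, but the specific tools --- the tropical refinement, the smooth projective compactification, and the Sturmfels--Tevelev lemma --- are exactly what is missing from your plan.
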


\begin{remark}
Conceptually the proof of this theorem is very straightforward.  The
upper bound comes from intersecting $D$ with curves obtained as the
intersection of $Y$ with appropriate torus-orbit closures on $\Delta$.
Care must be taken to make this rigorous however, as we do not assume
that $X_{\Delta}$ is smooth or complete.  In particular, we note that
if $\dim(\Delta)<n-1$ then $A_1(X_{\Delta})=A_0(X_{\Delta}) = 0$, so
the classes of these curves in $X_{\Delta}$ are zero.
\end{remark}

\begin{proof}
The inclusion $i^*(\G) \subseteq i^*(\L)$ was shown in
Proposition~\ref{p:toriccontainment}.  To see the inclusion $i^*(\L)
\subseteq \nef(Y)$, let $D' = i^*(D)$ for $D \in \L$.  Choose a
complete fan $\Sigma$ for which $\Delta$ is a subfan.  This is
possible by \cite[Theorem 2.8]{Ewald}.  Write $k \colon X_{\Delta}
\rightarrow X_{\Sigma}$ for the inclusion morphism.  After repeated
stellar subdivision of cones in $\Sigma \setminus \Delta$, we may
assume that if $\sum_{i \in \Delta(1)} a_i D_i$ is a Cartier divisor
on $X_{\Delta}$, then $\sum_{i \in \Delta(1)} a_i D_i$ is a Cartier
divisor on $X_{\Sigma}$, which implies that $k^* \colon
\Pic(X_{\Sigma}) \rightarrow \Pic(X_{\Delta})$ is surjective.  Let $C$
be an irreducible curve on $Y$, and let $\sigma \in \Delta$ be the
largest cone of $\Delta$ for which $C \subseteq V(\sigma)$, so $C \cap
\mathcal O_{\sigma} \neq \emptyset$, where $\mathcal O_{\sigma}$ is
the torus orbit corresponding to the cone $\sigma$.  Write
$V_{\Delta}(\sigma)$ and $V_{\Sigma}(\sigma)$ for the closure of
$\mathcal O_{\sigma}$ in $X_{\Delta}$ and $X_{\Sigma}$ respectively.
We may also regard $C$ as a curve on $X_{\Delta}$, on
$V_{\Delta}(\sigma)$, or on $V_{\Sigma}(\sigma)$.  We denote by
$[C]_Y$ the class of $C$ in $A_1(Y)$, and similarly for the other
ambient spaces.  Note that $j_*([C]_Y) =
{i_{\sigma}}_*([C]_{V_{\Sigma}(\sigma)}) = [C]_{\Sigma} \in
A_1(X_{\Sigma})$, where $j= k \circ i$.  Since $D \in \L$, by
Part~\ref{i:sigmapos} of Proposition~\ref{p:pisigmastar} we can write
$[D] = \sum a_i D_i$, where $a_i=0$ for $i \in \sigma$, and $a_i \geq
0$ for $i \in \starr^1(\sigma)$.  Let $\tilde{D} = \sum a_i D_i$ be
the corresponding divisor on $X_{\Sigma}$ with $k^*(\tilde{D})=D$.  We
then have the following diagram:

$$\xymatrix{
& V_{\Delta}(\sigma) \ar[r]^k \ar[d]^{i_{\sigma}} & V_{\Sigma}(\sigma) \ar[d]^{i_{\sigma}} \\
Y \ar[r]^i \ar@/_2pc/[rr]_{j} & X_{\Delta} \ar[r]^k & X_{\Sigma}
}
$$

Then $i_{\sigma}^*(\tilde{D}) = \sum_{\tau \in
  \starr^1_{\Sigma}(\sigma)} b_{\tau} D_{\tau}$, where $b_{\tau}$ is
as in Equation~\ref{e:pullback}.  Note that for $\tau \in \Delta$, the
coefficient $b_{\tau}$ is a positive multiple of $a_i$ for $i \in \tau
\setminus \sigma$, and thus $b_{\tau} \geq 0$.  Since $C \cap \mathcal
O_{\sigma} \neq \emptyset$, $D_{\tau} \cdot [C]_{V_{\Sigma}(\sigma)}
\geq 0$ for all torus-invariant prime divisors $D_{\tau}$ on
$V_{\Sigma}(\sigma)$.  In addition, since $C \subset Y \subset
X_{\Delta}$, we have $D_{\tau} \cdot [C]_{V_{\Sigma}(\sigma)} = 0$ for
$\tau \not \in \Delta$.  Thus $i_{\sigma}^*(\tilde{D}) \cdot
[C]_{V_{\Sigma}(\sigma)} \geq 0$, so by the projection formula 
${i_{\sigma}}_*(i_{\sigma}^*(\tilde{D}) \cdot [C]_{V_\Sigma(\sigma)})
= \tilde{D} \cdot [C]_{\Sigma} \geq 0$.  
Then $j_*(D' \cdot [C]_Y) = \tilde{D} \cdot [C]_{\Sigma} \geq 0$,
so since $j_*$ is not the zero map, $D' \cdot [C]_Y \geq 0$ and thus
$D' \in \nef(Y)$.

For the last inclusion, fix $D' \in \nef(Y)$.  Since $i^*$ is assumed
to be surjective we can write $D' = i^*(D)$ for some $D \in
\Pic(X_{\Delta})_{\mathbb R}$.  We wish to show that $\mathbf{w}(D
\cdot V(\tau)) \geq 0$ for all $\tau \in \Delta(d-1)$.

Choose a smooth toric resolution $X_{\widetilde{\Delta}}$ of
$X_{\Delta}$  with the property that if $\widetilde{Y}$ is the
closure of $Y^0$ in $X_{\widetilde{\Delta}}$ (so $\widetilde{Y}$ is
the strict transform of $Y$ with respect to the morphism $\pi \colon
X_{\widetilde{\Delta}} \rightarrow X_{\Delta}$), then the inclusion
$\widetilde{Y} \rightarrow X_{\widetilde{\Delta}}$ is tropical in the
sense of \cite{Tevelev}.  To see that this is  always possible,  begin
by taking the common refinement of $\Delta$ and a fixed tropical fan
with support $|\Delta|$, and then taking a smooth refinement of this
fan.  The resulting morphism $\pi \colon \widetilde{\Delta}
\rightarrow \Delta$ will be proper  and birational.
 Moreover, it defines a smooth tropical compactification as
 refinements of tropical fans are tropical by \cite[Proposition
   2.5]{Tevelev}.

Let $\widetilde{\mathbf{w}} \in \Hom(A_{n-d}(X_{\widetilde{\Delta}}),
\mathbb R)$ be the induced homomorphism defined by 
$\widetilde{\mathbf{w}}(V(\tilde{\sigma})) =
\mathbf{w}(\pi_*(V(\tilde{\sigma})))$ for all $\tilde{\sigma} \in
\widetilde{\Delta}(d)$.  To see that $\widetilde{\mathbf{w}}$ gives a
well-defined element of $\Hom(A_{n-d}(X_{\widetilde{\Delta}}),\mathbb
R)$ consider the relations on
$A_{n-d}(X_{\widetilde{\Delta}})$ given in \cite[Proposition
  2.1b]{FultonSturmfels}.  Given $\tau \in \Delta(d-1)$, choose
$\tilde{\tau}$ with $\pi(\tilde{\tau}) \subset \tau$, so
$\pi_*(V(\tilde{\tau})) = V(\tau)$ (\cite[p100]{Fulton}).  Then
$\mathbf{w}(D \cdot V(\tau)) = \mathbf{w}(\pi_*(\pi^*(D) \cdot
V(\tilde{\tau}))) = \widetilde{\mathbf{w}}(\pi^*(D) \cdot
V(\tilde{\tau}))$.  So it suffices to show that
$\widetilde{\mathbf{w}} (\pi^*(D) \cdot V(\tilde{\tau})) \geq 0$ for
all $\tilde{\tau} \in \widetilde{\Delta}(d-1)$.

Choose a smooth projective toric variety $X_{\widetilde{\Sigma}}$
whose fan $\widetilde{\Sigma}$ contains $\widetilde{\Delta}$ as a
subfan so $k \colon X_{\widetilde{\Delta}} \rightarrow
X_{\widetilde{\Sigma}}$ is an embedding.  This is possible, assuming a
suitably refined choice of $\widetilde{\Delta}$, because the tropical
fan structure on $|\Delta|$ given by the Gr\"obner fan has a
projective compactification given by the Gr\"obner fan, and we can
take a projective toric resolution of singularities of the resulting
fan, refining $\widetilde{\Delta}$ if necessary.  As before, choose
$\widetilde{\Sigma}$ so that if $\sum_{i \in \widetilde{\Delta}(1)}
a_i D_i$ is a Cartier divisor on $X_{\widetilde{\Delta}}$, then
$\sum_{i \in \widetilde{\Delta}(1)} a_i D_i$ is a Cartier divisor on
$X_{\widetilde{\Sigma}}$, and so the induced map $k^* \colon
\Pic(X_{\widetilde{\Sigma}}) \rightarrow \Pic(X_{\widetilde{\Delta}})$
is surjective.

Let $\tilde{j} \colon \widetilde{Y} \rightarrow X_{\widetilde{\Sigma}}$ be
the composition of $\tilde{i} \colon \widetilde{Y} \rightarrow
X_{\widetilde{\Delta}}$ and $k \colon X_{\widetilde{\Delta}}
\rightarrow X_{\widetilde{\Sigma}}$.  We thus have the following diagram.

$$\xymatrix{
\widetilde{Y} \ar[r]^{\tilde{i}} \ar[d]^{\pi} \ar@/^2pc/[rr]^{\tilde{j}}& 
X_{\widetilde{\Delta}} \ar[r]^k \ar[d]^{\pi} & X_{\widetilde{\Sigma}} \\
Y \ar[r]^i & X_{\Delta} & 
}$$

Fix $\tilde{\tau} \in \widetilde{\Delta}(d-1)$.  Since $\widetilde{Y}
\rightarrow X_{\widetilde{\Delta}}$ is a tropical compactification,
$C_{\tilde{\tau}} = \widetilde{Y} \cap V(\tilde{\tau})$ is a curve on $\widetilde{Y}$.
Let $C_1,\dots, C_r$ be the irreducible components of
$C_{\tilde{\tau}}$.  Then by \cite[\S 7.1]{Fulton} we have
$[\widetilde{Y}]_{\widetilde{\Sigma}} \cdot
[V_{\widetilde{\Sigma}}(\tilde{\tau})]_{\widetilde{\Sigma}} = \sum
  \lambda_i [C_i]_{\widetilde{\Sigma}} \in
    A_1(X_{\widetilde{\Sigma}})$, where $\lambda_i \geq 1$.

Write $\pi^*(D) = \sum_{i \in \widetilde{\Delta}(1)} a_i D_i$, and let
$\tilde{D}= \sum_{i \in \widetilde{\Delta}(1)} a_i D_i \in
A_{n-1}(X_{\widetilde{\Sigma}})$, so $k^*(\widetilde{D}) = \pi^*(D)$,
and thus $\pi^*(D') = {\tilde{j}}^*(\widetilde{D})$.  Since $D' \in
\nef(Y)$, we have $\pi^*(D') \in \nef(\widetilde{Y})$.  Thus
$\pi^*(D') \cdot [C_i]_{\widetilde{Y}} \geq 0$ for $1 \leq i \leq r$.
So $\tilde{D} \cdot [C_i]_{\tilde{\Sigma}} = \tilde{j}_* (\pi^*(D')
\cdot [C_i]_{\widetilde{Y}}) \geq 0$ by the projection formula.  Thus
$\tilde{D} \cdot [\widetilde{Y}] \cdot [V(\tilde{\tau})] \geq 0$,
where this computation takes place in $A^*(X_{\tilde{\Sigma}})$.

Extend $\widetilde{\mathbf{w}}$ to an element of
$\Hom(A_{n-d}(X_{\tilde{\Sigma}}),\mathbb R)$ by setting
$\widetilde{\mathbf{w}}(V(\sigma))=0$ for $\sigma \in
\widetilde{\Sigma}(d) \setminus \widetilde{\Delta}(d)$.  Again, this
is well-defined by \cite[Proposition 2.1b]{FultonSturmfels}.  Since
$\widetilde{Y} \rightarrow X_{\widetilde{\Delta}}$ is a smooth
tropical compactification, \cite[Lemma 3.2(1)]{SturmfelsTevelev}
implies that $[\widetilde{Y}] \cdot ([\tilde{D}] \cdot
[V(\tilde{\tau})]) = \widetilde{\mathbf{w}}([\tilde{D}] \cdot
[V(\tilde{\tau})])$.  Thus $\widetilde{\mathbf{w}}([\tilde{D}] \cdot
[V(\tilde{\tau})]) \geq 0$.

Note that if $\tilde{D} \cdot [V(\tilde{\tau})]_{\widetilde{\Sigma}} =
\sum_{\sigma \in \widetilde{\Sigma}(d)} a_{\sigma}
    [V(\sigma)]_{\widetilde{\Sigma}}$ for $\tilde{\tau} \in
    \widetilde{\Delta}(d-1)$ then $\pi^*(D) \cdot
              [V(\tilde{\tau})]_{\widetilde{\Delta}} = \sum_{\sigma
                \in \widetilde{\Delta}(d)} a_{\sigma}
              [V(\sigma)]_{\widetilde{\Delta}}$, so
              $\widetilde{\mathbf{w}}([\tilde{D}]_{\widetilde{\Sigma}}
              \cdot [V(\tilde{\tau})]_{\widetilde{\Sigma}}) =
              \widetilde{\mathbf{w}}([\pi^*(D)] \cdot
                        [V(\tilde{\tau})]) \geq 0$ as required.
\end{proof}

\begin{corollary}  \label{c:polyhedralnef}
Let $\Delta$ be a pure $d$-dimensional polyhedral fan such that there is 
$\mathbf{w} \in \Hom(A_{n-d}(X_{\Delta}),\mathbb R)$ with 
$\L = \Uw.$  Then any
subvariety $i \colon Y \rightarrow X_{\Delta}$ with $i^* \colon
\Pic(X_{\Delta})_{\mathbb R} \rightarrow N^1(Y)_{\mathbb R}$
surjective and $\trop(Y \cap T) = \Delta$ with multiplicities given by
$\mathbf{w}$ has $$\nef(Y) = i^*(\L) = i^*(\Uw).$$ The nef cone of
$Y$ is thus polyhedral.
\end{corollary}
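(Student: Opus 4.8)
The plan is to read this off directly from Theorem~\ref{t:bound}, with Corollary~\ref{c:Lcone} supplying polyhedrality. First I would verify that the hypotheses of Theorem~\ref{t:bound} are satisfied for $i \colon Y \to X_{\Delta}$: by assumption $\trop(Y^0) = \Delta = |\Delta|$ for $Y^0 = Y \cap T$, the map $i^* \colon \Pic(X_{\Delta})_{\mathbb R} \to N^1(Y)_{\mathbb R}$ is surjective, and $\Delta$ is pure of dimension $d = \dim Y$. It is important here that the $\mathbf{w}$ in the hypothesis $\L = \Uw$ is exactly the element of $\Hom(A_{n-d}(X_{\Delta}), \mathbb R)$ determined by the tropical multiplicities of $Y^0$ on $\Delta$ --- this is precisely the $\mathbf{w}$ ``as described above'' appearing in the statement of Theorem~\ref{t:bound} --- which is guaranteed by the assumption that $\trop(Y\cap T)=\Delta$ with multiplicities given by $\mathbf{w}$.

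With these checks in hand, Theorem~\ref{t:bound} gives the chain
$$ i^*(\L) \subseteq \nef(Y) \subseteq i^*(\Uw). $$
Now apply $i^*$ to the hypothesis $\L = \Uw$ to get $i^*(\L) = i^*(\Uw)$; the two ends of the chain agree, so all three cones coincide: $\nef(Y) = i^*(\L) = i^*(\Uw)$.

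For the final sentence I would argue that $i^*(\L)$ is polyhedral. By Part~\ref{i:modifiedGcone} of Corollary~\ref{c:Lcone}, $\L = \bigcap_{\sigma \in \Delta^{\circ}} \pos([D_i], \pm[D_j] : i \in \starr^1(\sigma)\setminus\sigma,\ j \in \Delta(1)\setminus\starr^1(\sigma))$ is a finite intersection of cones, each generated by a finite set of divisor classes, hence polyhedral; a finite intersection of polyhedral cones is polyhedral, and the image of a polyhedral cone under the linear map $i^*$ is polyhedral (a finitely generated cone has finitely generated image). Hence $\nef(Y) = i^*(\L)$ is polyhedral.

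I do not anticipate a real obstacle: the substance is entirely contained in Theorem~\ref{t:bound}, and the corollary is a formal consequence. The only points demanding a sentence of care are (i) identifying the $\mathbf{w}$ of the hypothesis with the tropical $\mathbf{w}$ required by Theorem~\ref{t:bound}, and (ii) the observation that linear images of polyhedral cones remain polyhedral, so that the equality $\nef(Y) = i^*(\L)$ indeed yields polyhedrality.
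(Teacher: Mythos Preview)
Your proposal is correct and follows essentially the same route as the paper: apply Theorem~\ref{t:bound} to get $i^*(\L)\subseteq\nef(Y)\subseteq i^*(\Uw)$, collapse the chain using $\L=\Uw$, and then observe that the linear image of a polyhedral cone is polyhedral. The only difference is that you spell out polyhedrality of $\L$ via Corollary~\ref{c:Lcone}, whereas the paper simply asserts that $\L$ and $\Uw$ are polyhedral; your extra care about matching the hypothesis $\mathbf{w}$ with the tropical multiplicity vector is also a fair elaboration of what the paper leaves implicit.
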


\begin{proof}
We have the inclusions $i^*(\L) \subseteq \nef(Y) \subseteq i^*(\Uw)$
by Theorem~\ref{t:bound}, so $\L=\Uw$ implies that both inclusions are
equalities.  Since pullback is a linear map, and $\L$ and $\Uw$ are
both polyhedral, the conclusion follows.
\end{proof}

\begin{problem}
Characterize which polyhedral fans $\Delta$ have the property that
$\L = \Uw$ for some $\mathbf{w} \in \Hom(A_{n-d}(X_{\Delta}), \mathbb R)$.  When one
such $\mathbf{w}$ exists, characterize the set of possible
$\mathbf{w}$.  Also give a characterization of those fans  $\Delta$ with the property that
$\L = \Uw$ for all $\mathbf{w} \in \Hom(A_{n-d}(X_{\Delta}),\mathbb R)$.\end{problem}

\begin{remark}
While the hypothesis of Corollary~\ref{c:polyhedralnef} requires $\L =
\Uw$, it may be easier to check in examples the stronger condition
that $\G=\Uw$.
\end{remark}

\section{Examples} \label{Examples}
In this section we consider three families of examples of the cones
$\L$, $\G$, and $\Uw$, highlighting where they have previously appeared
in the literature in other guises.

In Section \ref{ss:delpezzo} we compute our cones for a family of del
Pezzo surfaces, illustrating two phenomena: firstly, in this case, the
closer the variety is to being toric, the closer the upper and lower
bounds are to each other and secondly, the bounds on the nef cone
given by our methods depend on the choice of toric embedding.  In
Section \ref{ss:CY} we relate our construction to the Cox/Katz
conjecture and examples of Buckley, Hassett/Lin/Wang, and
Szendr\H{o}i.  One of their examples shows that one can have $\G(Y)
\subsetneq \nef(Y)$; see Example~\ref{e:Bl2p4}.  Finally, in Section
\ref{ss:PicardRankTwo}, we compute our cones for subvarieties of
smooth Picard-rank two toric varieties.  Example~\ref{e:FDeltak} shows
that one does not always have $A_{n-d}(X_{\Delta}) \cong \mathbb Z$
when $\Delta$ is a $d$-dimensional fan in $\mathbb R^n$, and that the
cone $\Uw$ depends on the choice of $\mathbf{w}$.

The many calculations reported in this section were performed using a Macaulay 2 package
available from Maclagan's webpage \cite{GLUpackage}, \cite{M2}.

\subsection{Del Pezzo Surfaces}
\label{ss:delpezzo}

Del Pezzo surfaces are the blow-up of $\mathbb P^2$ in at most
eight general points.  We consider general del Pezzo surfaces,
which can be realized as tropical compactifications of complements of
particular line arrangements in $\mathbb P^2$.  We now review this
construction and the resulting cones $\mathcal G, \mathcal L$, and  $\mathcal
F_{\mathbf{w}}$.

Fix $1 \leq r \leq 8$ points $p_1,\dots,p_r$ in $\mathbb P^2$ such
that no three points lie on a line, no six points lie on a conic, and
no eight points lie on a cubic having a node at one of them.  If $1
\le r \le 3$, then $\overline{Y}=\mathrm{Bl}_{p_1,\dots,p_r}(\mathbb P^2)$ is a
projective toric variety, and by Proposition~\ref{p:toriccontainment},
the three cones all coincide with $\nef(\overline{Y})$.  For $r \ge 4$, we let
$\mathcal A$ be the line arrangement consisting of all ${ r \choose
  2}$ lines through pairs of the points.  We place the additional
genericity condition $(\dag)$ here on our choice of points that the
only intersection points of three or more lines are the original
$p_i$.  Such generic configurations exist for all $r$.  For $r \leq 5$
all configurations are generic in this sense, and for $r >5$ a
configuration with $r$ points can be obtained from one with $r-1$
points by choosing the $r$th point not on any line joining one of the
original $r-1$ points to any intersection point.

The line $\ell_{ij}$ joining $p_i$ to $p_j$ has the form $\{ y = (y_0
: y_1 :y_2) \in \mathbb P^2 : \mathbf{a}_{ij} \cdot y =
(\mathbf{a}_{ij})_0 y_0+ (\mathbf{a}_{ij})_1 y_1+ (\mathbf{a}_{ij})_2
y_2 = 0 \}$ for some $\mathbf{a}_{ij} \in \K^3$.  The complement $Y =
\mathbb P^2 \setminus \mathcal A$ embeds as a subvariety of the torus
$T^{{r \choose 2} -1}$ of $\mathbb P^{{r \choose 2}-1}$ via the map $\phi \colon Y
\rightarrow T^{{r \choose 2} -1}$ given by
$$\phi(y) = ( \mathbf{a}_{12} \cdot y : \dots : \mathbf{a}_{r-1 r}
\cdot y ) \in T^{{r \choose 2}-1}.$$ Let $A$ be the $3 \times {r \choose 2}$ matrix with
columns indexed by the pairs $\{i,j\}$ with $1 \leq i <j \leq r$ and
$ij$th column  $\mathbf{a}_{ij}$.  Then $\phi(Y) = \{ (z_{ij}) \in T^{{r \choose 2}-1} :
\sum b_{ij} z_{ij} = 0 \text{ for all } b=(b_{ij}) \in \ker(A) \}$.  

We denote by $\mathbf{e}_{ij}$ the basis vector for $\mathbb R^{r
  \choose 2}$ indexed by the pair $\{i,j\}$.  Let $\Sigma_r$ be the
two-dimensional fan in $\mathbb R^{r \choose 2} / \mathbb R(1,\dots,
1)$ whose rays are generated by the images of the points $\mathbf{e}_{ij}$
for $1 \leq i <j \leq r$ and the images of the points $\mathbf{f}_i =
\sum_{j \neq i} \mathbf{e}_{ij}$ for $1 \leq i \leq r$.  The cones of
$\Sigma_r$ are the images of $\pos(\mathbf{f}_i, \mathbf{e}_{ij}) +
\mathbb{R} (1,\dots, 1)$ for $1 \leq i \leq r$, $j \neq i$, and
$\pos(\mathbf{e}_{ij}, \mathbf{e}_{kl})+\mathbb R (1,\dots,1)$ for $\{
i,j, k, l \} \subset \{1,\dots,r\}$ with $|\{i,j,k,l\}|=4$ .

\begin{proposition} \label{p:delpezzo}
The tropical variety $\trop(\phi(Y)) \subseteq \mathbb R^{r \choose 2}
/ \mathbb R(1,\dots, 1)$ is the support of $\Sigma_r$.  The closure
$\overline{Y} \subset X_{\Sigma_r}$ equals the del Pezzo surface
$\mathrm{Bl}_{p_1,\dots,p_r}(\mathbb P^2)$, and the induced map $i^*
\colon \Pic(X_{\Sigma_r})_{\mathbb R} \rightarrow N^1(\overline{Y})_{\mathbb R}$ is
an isomorphism.  In addition $A_{{r \choose 2}-3}(X_{\Sigma_r}) \cong
\mathbb Z$.
\end{proposition}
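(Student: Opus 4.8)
The plan is to establish the four assertions of Proposition~\ref{p:delpezzo} in the following order: (1) the tropicalization statement, (2) the identification of the compactification $\overline{Y}$, (3) that $i^*$ is an isomorphism, and (4) the computation of $A_{\binom r2-3}(X_{\Sigma_r})$. For (1), I would note that $\phi(Y)$ is the complement of a hyperplane arrangement inside a torus, in fact the very linear space cut out by $\ker(A)$, so $\trop(\phi(Y))$ is a Bergman fan. The combinatorics of the matroid of $A$ is governed by the line arrangement $\mathcal A$ together with the genericity hypothesis $(\dag)$: the flats of rank one are the points $\mathbf e_{ij}$, and the flats of rank two correspond either to a pair of non-collinear lines (giving the cones $\pos(\mathbf e_{ij},\mathbf e_{kl})$ with $|\{i,j,k,l\}|=4$) or to a maximal collinear set, which by $(\dag)$ is exactly a pencil of lines through one of the $p_i$ (giving the $\mathbf f_i$ together with the $\mathbf e_{ij}$, $j\ne i$, in the corresponding cone). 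Thus the Bergman fan of $A$ is precisely $\Sigma_r$, which proves (1). The multiplicities are all $1$ because linear spaces have trivial tropical multiplicities; this is what will later let us take $\mathbf w$ with all $w_\sigma=1$.

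For (2), I would invoke Tevelev's theory: $X_{\Sigma_r}$ is a (smooth, since $\Sigma_r$ is simplicial and unimodular — this needs a brief check) toric variety whose fan has support $\trop(\phi(Y))$, so the closure $\overline Y$ is a tropical compactification, hence proper, and $\overline Y\to \overline Y$ has the expected dimension. To identify $\overline Y$ with $\mathrm{Bl}_{p_1,\dots,p_r}(\mathbb P^2)$ I would argue that the toric boundary divisors restrict on $\overline Y$ to the strict transforms of the lines $\ell_{ij}$ (the $\mathbf e_{ij}$-divisors) and to the exceptional divisors over the $p_i$ (the $\mathbf f_i$-divisors), which gives an identification on the level of the open part and then extends; alternatively one can cite the known wonderful-compactification description of del Pezzo surfaces as the minimal wonderful model of the braid-like arrangement $\mathcal A$. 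The genericity conditions (no three collinear beyond the $p_i$, no six on a conic, no eight on a cubic with a node) are exactly what guarantees the blow-up is a genuine del Pezzo (anticanonical) surface of the expected degree $9-r$.

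For (3), the map $i^*\colon \operatorname{Pic}(X_{\Sigma_r})_{\mathbb R}\to N^1(\overline Y)_{\mathbb R}$ is surjective because the boundary divisors of $X_{\Sigma_r}$ restrict to the strict transforms of the $\ell_{ij}$ and the exceptional divisors $E_i$, and these generate $N^1(\mathrm{Bl}_{p_1,\dots,p_r}\mathbb P^2)$; injectivity follows by a dimension count, since $\operatorname{Pic}(X_{\Sigma_r})_{\mathbb R}$ has dimension $\binom r2+r-(\binom r2-1)=r+1$ (number of rays minus $\dim$ of the ambient lattice quotient), which matches $\dim N^1=\rho(\overline Y)=r+1$; combined with surjectivity this forces an isomorphism. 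Finally (4): since $X_{\Sigma_r}$ is a smooth (unimodular) toric variety of pure dimension $2$ whose fan is connected in codimension one and spans, the group $A_{\binom r2-2}$ of ``curve classes'' is generated by the $V(\tau)$ for $\tau$ the rays, i.e. one generator per $2$-dimensional cone, modulo the Fulton--Sturmfels relations $\sum_{\tau\supset \rho}\langle \mathbf u,\mathbf e_\tau\rangle [V(\tau)]=0$ indexed by rays $\rho$ and $\mathbf u\in (N/N_\rho)^\vee$; by Poincaré duality for the smooth $X_{\Sigma_r}$ one has $A_{\binom r2-2}(X_{\Sigma_r})\cong A^{?}$, but more directly $A_{\binom r2-3}$ is the analogous group one dimension up and I would compute it as $\mathbb Z$ by exhibiting all its generators $[V(\sigma)]$, $\sigma$ a maximal cone, as equal (up to sign) via the link relations — concretely, the link of each ray in $\Sigma_r$ is a single circuit, which forces all the maximal-cone classes to be identified with a common generator. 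The main obstacle I anticipate is (4), the Chow-group computation: one must carefully track the Fulton--Sturmfels presentation of $A_{n-d}$ for this specific non-complete fan and verify the relations collapse the free group on $\Sigma_r(2)$ down to rank one; the combinatorial input is that $\Sigma_r$, viewed as a $2$-dimensional fan, is ``connected through codimension one'' with the right cycle structure, and I would likely do the $r=4,5$ cases by hand first and then give an inductive argument adding one point at a time.
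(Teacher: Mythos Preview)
Your approach is essentially the same as the paper's: Ardila--Klivans for the Bergman fan, Tevelev for the compactification, surjectivity of $i^*$ via the identification $i^*(D_{f_i})=E_i$, $i^*(D_{ij})=L_{ij}$ together with the dimension count $|\Sigma_r(1)|-\bigl(\binom{r}{2}-1\bigr)=r+1=\rho(\overline{Y})$, and Fulton--Sturmfels for the Chow group. The paper's proof is much terser than yours (it simply cites \cite{ArdilaKlivans}, \cite[Example~4.1]{Tevelev}, and \cite[p.~337]{FultonSturmfels} for parts (1), (2), (4) respectively), so your elaborations are welcome.

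Two small corrections. First, you assert in passing that $X_{\Sigma_r}$ is smooth; the paper never claims this, it is not needed, and you should not rely on it (in particular the Poincar\'e duality remark in your discussion of (4) is a red herring). Second, your description of (4) has some indexing slips and the claim that ``the link of each ray in $\Sigma_r$ is a single circuit'' does not make sense as stated: the link of a ray in a two-dimensional fan is a finite set of points in a one-dimensional quotient, not a graph. What you actually need is that the space of Minkowski weights on the maximal cones of $\Sigma_r$ is one-dimensional, since by Fulton--Sturmfels this space is $\Hom(A_{\binom{r}{2}-3}(X_{\Sigma_r}),\mathbb Z)$. For the coarse Bergman fan of a connected matroid this is a known fact (every balanced weighting is a constant multiple of the all-ones weighting), and that is the cleanest route; your proposed induction on $r$ would also work but is more laborious than necessary. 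The paper simply cites the Fulton--Sturmfels presentation and leaves this verification to the reader.
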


\begin{proof}
The description of $\trop(\phi(Y))$ is immediate from the 
construction of the tropical variety of a linear space due
to \cite{ArdilaKlivans}; see also \cite[Chapter 4]{TropicalBook}.
Note that this is the coarse fan structure on this tropical variety.
The fact that the closure of $\phi(Y)$ in $X_{\Sigma_r}$ is
$\mathrm{Bl}_{p_1,\dots,p_r}(\mathbb{P}^2)$ follows from \cite[Example
  4.1]{Tevelev}, as our genericity assumptions on the $p_i$ ensure
that we are not in the exceptional case of that example.
To see that the induced map from $\Pic(X_{\Sigma_r})_{\mathbb R}$ is an
isomorphism, let $D_{f_i}$ be the divisor on $X_{\Sigma_r}$
corresponding to the ray through the image of $\mathbf{f}_i$, and let
$D_{ij}$ be the divisor corresponding to the ray through the image of
$\mathbf{e}_{ij}$.  We first note that $i^*(D_{f_i}) = E_i$, the
exceptional divisor obtained by blowing up the point $p_i$, and
$i^*(D_{ij}) = L_{ij}$, the strict transform of the line joining the
points $p_{i}$ and $p_j$. 
Since the $E_i$ and $L_{ij}$ span $N^1(\overline{Y})$, $i^*$ is
surjective.  Recall that $\{ E_i : 1 \leq i \leq r \} \cup \{ \ell\}$
form a basis for $N^1(\overline{Y})$, where $\ell$ is the pullback to
$\overline{Y}$ of a line in $\mathbb P^2$.  Thus to show the
isomorphism it suffices to show that $\Pic(X_{\Sigma_r}) \cong \mathbb
Z^{r+1}$.  This follow from the short exact sequence defining the
class group of $X_{\Sigma_r}$ \cite[p63]{Fulton}, since
$|\Sigma_r(1)|-({r \choose 2} -1) =r+1$.

The claim that $A_{{r \choose 2}-3}(X_{\Sigma_r}) \cong \mathbb Z$
follows from the description of the Chow groups of a toric variety
given in \cite[p337]{FultonSturmfels}.  
\end{proof}

We now list the relationships between $\nef(\overline{Y})$ and the
cones associated to $\Sigma_r$.

\begin{enumerate}

\item When $r=4$ we have $i^*(\mathcal G_{\Sigma_4})=i^*(\mathcal L_{\Sigma_4})
  = \nef(\overline{Y}) =i^*(\mathcal F_{\Sigma_4}) = i^*(\mathcal
  F_{\Sigma_4,\mathbf{w}})$.  In this case $\overline{Y} \cong \Bl_4(\mathbb P^2)
  \cong \overline{M}_{0,5}$.

\item When $r=5$ we have $i^*(\mathcal G_{\Sigma_5})=i^*(\mathcal
  L_{\Sigma_5}) = \nef(\overline{Y}) \subsetneq i^*(\mathcal
  F_{\Sigma_5}) = i^*(\mathcal F_{\Sigma_5,\mathbf{w}})$.  The cone
  $i^*(\mathcal L_{\Sigma_5})=\nef(\overline{Y})$ equals $i^*(\mathcal
  F_{\Sigma_5, \mathbf{w}} )\cap \{ [D] : [D] \cdot [C_5] \geq 0 \}$,
  where $[C_5]=2 \ell - \sum_{i=1}^5 E_i$ is the conic through the
  five points $p_1,\dots,p_5$.

\item When $r=6$ we have $i^*(\mathcal G_{\Sigma_6})=i^*(\mathcal L_{\Sigma_6})
  \subsetneq \nef(\overline{Y}) \subsetneq i^*( \mathcal F_{\Sigma_6})
  =i^*(\mathcal F_{\Sigma_6,\mathbf{w}})$.  As in the previous case
  $\nef(\overline{Y}) = i^*(\mathcal F_{\Sigma_6,\mathbf{w}}) \cap \{[D] :
  [D] \cdot [C_i] \geq 0, 1 \leq i \leq 6 \}$, where $C_i$ is the
  conic through the five points obtained by omitting $p_i$.  The cone
  $i^*(\mathcal L_{\Sigma_6})$ is the intersection of $\nef(\overline{Y})$
  (or indeed $i^*(\mathcal F_{\Sigma_6,\mathbf{w}})$) with $\{[D] : [C]
  \cdot [D] \geq 0 \}$, where $[C]=2\ell - \sum_{i=1} E_i$.  This
  class $[C]$ is not effective unless all six points lie on a conic,
  which will not be the case if the points are sufficiently general,
  so does not give a facet of $\nef(\overline{Y})$.

\item For $r=7,8$ we also have $i^*(\mathcal G_{\Sigma_r})=i^*(\mathcal
  L_{\Sigma_r}) \subsetneq \nef(\overline{Y}) \subsetneq i^*(\mathcal
  F_{\Sigma_r}) = i^*(\mathcal F_{\Sigma_r,\mathbf{w}})$.
\end{enumerate}

\subsection{Ample toric hypersurfaces} \label{ss:CY}

In this section we consider the case that $Y$ is a general ample
hypersurface in a simplicial projective toric variety $X_{\Sigma}$,
embedded by a morphism $i$.  Of particular interest is the case that
$X_{\Sigma}$ is Fano, and $[Y] = -K_{X_{\Sigma}}$, so $Y$ is
Calabi-Yau.  This situation was considered in \cite[\S6.2.3]{CoxKatz},
where they conjectured a description for $\nef(Y) \cap
i^*(\Pic(X_{\Sigma}))$.  We will show that this conjectured
description is $i^*(\G)$, where $\Delta$ is the subset of the
codimension-one skeleton of $\Sigma$ containing those cones
corresponding to torus orbits intersecting $Y$.  We first recall this,
and discuss the counterexamples given by Szendr\H{o}i and others in
our context.

A generalized flop of $X_{\Sigma}$ is a simplicial projective toric
variety with the same rays as $\Sigma$ whose fan is obtained by a
bistellar flip over a circuit $\Xi=(\Xi^+,\Xi^-)$ of $\Sigma$  (see
\cite[\S 7.2.C]{GKZ}), where bistellar flips are called modifications).
Loosely, this replaces cones containing $\pos(\mathbf{v}_i : i \in
\Xi^+)$ with those containing $\pos(\mathbf{v}_i : i \in \Xi^-)$.  A
generalized flop is a {\em trivial flip} if $|\Xi^+|, |\Xi^-| \geq 2$,
and $\cap_{i \in \Xi^-} D_i \cap Y = \emptyset$.  The first of these
conditions guarantees that $\Sigma'(1) = \Sigma(1)$, so
$\Pic(X_{\Sigma}) \cong \Pic(X_{\Sigma'})$.  We denote by $j^*$  the induced homomorphism $\Pic(X_{\Sigma'}) \rightarrow N^1(Y)_{\mathbb R}$.

In \cite[Conjecture 6.2.8]{CoxKatz} it was conjectured that $\nef(Y)
\cap i^*(\Pic(X_{\Sigma}))$ was equal to the union of
$j^*(\nef(X_{\Sigma'}))$ over all fans $\Sigma'$
that can be obtained from the fan $\Sigma$ by a sequence of trivial
flips.

\begin{lemma} \label{l:CoxKatzisG}
The cone $\bigcup j^*(\operatorname{Nef}(X_{\Sigma'}))$ equals $i^*(\G)$, where
$\Delta$ is the subset of the codimension-one skeleton of $\Sigma$ containing
those cones corresponding to torus orbit closures intersecting $Y$.
\end{lemma}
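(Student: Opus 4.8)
The plan is to realize both sides as the pullback to $Y$ of the union of the nef cones $\Nef(X_{\Sigma'})$ over the projective simplicial fans $\Sigma'$ with $\Sigma'(1)=\Sigma(1)$ containing $\Delta$ as a subfan, and then to match the two indexing families of fans. First some bookkeeping: $\Delta$ is a subfan of $\Sigma$ (it is closed under faces, since $V(\tau)\supseteq V(\sigma)$ whenever $\tau$ is a face of $\sigma$), and since $X_\Sigma$ is projective it satisfies Hypothesis~\ref{hyp:quasiproj}; since $Y$ is ample it meets every torus-invariant prime divisor, so every ray of $\Sigma$ lies in $\Delta$, $\Delta(1)=\Sigma(1)$, and $\Pic(X_\Sigma)_{\mathbb R}\to\Pic(X_\Delta)_{\mathbb R}$ is an isomorphism. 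Also $Y\subseteq X_\Delta$, because $Y\cap\mathcal O(\sigma)=\emptyset$ whenever $\sigma\notin\Delta$, and then each homomorphism $j^*$ of the statement is the common composite $\Pic(X_{\Sigma'})_{\mathbb R}=\Pic(X_\Delta)_{\mathbb R}\to N^1(Y)_{\mathbb R}$. Proposition~\ref{p:Gcone}(\ref{i:Gunionnef}), applied to $\Delta$, gives
\[
\G=\bigcup_{\Sigma'} i_{\Sigma'}^*\bigl(\Nef(X_{\Sigma'})\bigr)
\]
with the union over projective toric $X_{\Sigma'}$ satisfying $\Delta\subseteq\Sigma'$, equivalently over those with $\Sigma'(1)=\Sigma(1)$; one may further restrict to simplicial $\Sigma'$, since any such projective $\Sigma'$ has a projective simplicial refinement fixing $\Delta$, whose nef cone therefore pulls back to a subcone of $\Pic(X_\Delta)_{\mathbb R}$ at least as large. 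Pulling back along $Y\hookrightarrow X_\Delta$ gives $i^*(\G)=\bigcup_{\Sigma'}j^*(\Nef(X_{\Sigma'}))$ over the same family. So the lemma reduces to the combinatorial statement that the fans obtained from $\Sigma$ by sequences of trivial flips are exactly the projective simplicial fans $\Sigma'$ with $\Sigma'(1)=\Sigma(1)$ and $\Delta\subseteq\Sigma'$.

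Two inputs make this statement accessible. (i) The fan $\Delta$, with the structure inherited from $\Sigma$, is the tropical variety $\trop(Y\cap T)$ --- for general $[Y]$ this tropical hypersurface has support the codimension-one skeleton of the normal fan of the Newton polytope of $[Y]$, which $\Sigma$ refines --- and, combined with the standard criterion that $\overline{Y\cap T}\cap\mathcal O(\sigma)\ne\emptyset$ in $X_{\Sigma'}$ exactly when $\relint(\sigma)$ meets $|\trop(Y\cap T)|$, it follows that for any projective simplicial $\Sigma'$ with $\Sigma'(1)=\Sigma(1)$ and $\Delta\subseteq\Sigma'$ the cones of $\Sigma'$ whose orbit closures meet the strict transform of $Y$ are precisely the cones of $\Delta$. (ii) The condition $\bigcap_{i\in\Xi^-}D_i\cap Y=\emptyset$ defining a trivial flip over $\Xi=(\Xi^+,\Xi^-)$ is symmetric: the birational map of the generalized flop is an isomorphism away from $V(\pos(\mathbf v_i:i\in\Xi^+))$ on its source and away from $V(\pos(\mathbf v_i:i\in\Xi^-))$ on its target, so the condition is equivalent to $Y$ being disjoint from the flopped locus, i.e. to $V(\pos(\mathbf v_i:i\in\Xi^+))\cap Y=\emptyset$ in the source.

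Granting (i) and (ii), the combinatorial statement follows in two steps. For ``reachable by trivial flips $\Rightarrow$ contains $\Delta$'', induct on the length of the flip sequence: if $\Delta\subseteq\Sigma'$ and $\Sigma'\to\Sigma''$ is a trivial flip over $\Xi$, the destroyed cones are exactly those of $\Sigma'$ containing $\pos(\mathbf v_i:i\in\Xi^+)$; by (ii) the orbit closure of $\pos(\mathbf v_i:i\in\Xi^+)$ in $X_{\Sigma'}$ misses the strict transform of $Y$, so by (i) $\pos(\mathbf v_i:i\in\Xi^+)\notin\Delta$, and monotonicity of orbit closures then puts every destroyed cone outside $\Delta$; hence $\Delta\subseteq\Sigma''$. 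For the converse, use that $\G$ is convex (Proposition~\ref{p:Gcone}(\ref{i:Gasintersection})) and is the union of the chambers $\Nef(X_{\Sigma'})$, $\Delta\subseteq\Sigma'$, of the secondary fan of $\{\mathbf v_i:i\in\Sigma(1)\}$ (\cite[\S7.2.C]{GKZ}). Given such a $\Sigma'$, a generic line segment inside $\G$ from the chamber of $\Sigma$ to that of $\Sigma'$ crosses a finite sequence of chambers, consecutive ones meeting in a facet and hence corresponding to fans differing by a single bistellar flip over a circuit $\Xi$, with $|\Xi^+|,|\Xi^-|\ge 2$ (an element forming a one-element part would lie in the relative interior of the cone on the others, hence could not be a ray of both fans). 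All chambers along the segment lie in $\G$, so every fan there contains $\Delta$; then the diagonal cone $\pos(\mathbf v_i:i\in\Xi^-)$ created by each flip, not being a cone of the source fan, lies outside $\Delta$, so by (i) its orbit closure misses the strict transform of $Y$, which is the trivial-flip condition. Thus $\Sigma'$ is reachable from $\Sigma$ by trivial flips, and the two families coincide.

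I expect the main obstacle to be the combinatorial bookkeeping underlying (i) and (ii): pinning down exactly which cones a bistellar flip over a circuit creates and destroys, and checking that the trivial-flip condition is equivalent to disjointness of $Y$ from the flopped locus at the level of closures (one needs that the strict transform of $Y$ remains closed and of the expected dimension under these flops). By contrast, the identification of $\Delta$ with $\trop(Y\cap T)$ and the correspondence between facets of the secondary fan and bistellar flips are standard.
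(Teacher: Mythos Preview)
Your proof is correct and takes essentially the same approach as the paper: reduce via Proposition~\ref{p:Gcone}(\ref{i:Gunionnef}) to the combinatorial claim that the projective simplicial fans reachable from $\Sigma$ by trivial flips are exactly those with $\Sigma'(1)=\Sigma(1)$ and $\Delta\subseteq\Sigma'$, and then verify both directions using the secondary-fan chamber structure together with the tropical characterization of $\Delta$. Your auxiliary inputs (i) and (ii) spell out two points the paper treats more tersely---that $\Delta$ continues to describe exactly the cones meeting the strict transform of $Y$ in every intermediate fan, and that the trivial-flip condition on $\Xi^-$ in the target is equivalent to the analogous condition on $\Xi^+$ in the source---but the underlying argument is the same. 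The paper in fact bypasses your (ii) by working directly with $\pos(\Xi^-)$: it observes that $\cap_{i\in\Xi^-}D_i\cap Y=\emptyset$ forces $\pos(\Xi^-)\notin\Delta$, and since the cones of $\Delta$ are common to both fans, none of them are among those exchanged in the flip; your route via the symmetry of the flopped locus reaches the same conclusion with slightly different bookkeeping.
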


\begin{proof}
A general ample hypersurface $Y$ in $X_{\Sigma}$ will not pass through
the torus-fixed points of $X_{\Sigma}$, so by \cite[Lemma
  2.2]{Tevelev} the tropical variety $\trop(Y \cap T)$ is contained in
the codimension-one skeleton of $\Sigma$, and intersects 
those cones corresponding to torus-orbits closures intersecting $Y$.  Since
the tropical variety has dimension $n-1$ and is balanced, it must
actually be the union $\Delta$ of all such cones.

To show that $\bigcup i^*(\operatorname{Nef}(X_{\Sigma'})) = i^*(\G)$,
by Part~\ref{i:Gunionnef} of Proposition~\ref{p:Gcone}, it suffices to
show that a projective $\Sigma'$ can be obtained from $\Sigma$ by a
sequence of trivial flips if and only if $\Sigma'$ is the fan of
a projective toric variety with $\Sigma'(1)=\Delta(1)$ and $\Delta
\subseteq \Sigma'$.  For the ``only if'' direction, suppose that
$\Sigma'$ is obtained by a trivial flip over a circuit
$\Xi=(\Xi^+,\Xi^-)$ from a fan $\Sigma''$ containing $\Delta$.  From
the definition of trivial flip we have $\Sigma'(1)=\Sigma(1)$.  The
condition that $\cap_{i \in \Xi^-} D_i \cap Y = \emptyset$ implies
$\pos(\Xi^-) \not \in \Delta$, where $\pos(\Xi^-)$ is the cone
generated by rays indexed by $\Xi^-$.  Indeed, $\cap_{i \in \Xi^-}
D_i$ equals $V(\Xi^-)$, which is the orbit closure corresponding to
$\pos(\Xi^-)$.  This orbit closure intersects $Y$ if and only if
$\Delta$ contains a cone with $\pos(\Xi^-)$ as a face, and thus if and
only if $\pos(\Xi^-) \in \Delta$.  Since every cone removed from
$\Sigma''$ to obtain $\Sigma'$ contains $\pos(\Xi^-)$ as a face, we
conclude that no cone in $\Delta$ is removed, so $\Delta \subseteq
\Sigma'$.

For the ``if'' direction note that all fans $\Sigma'$ with
$\Sigma'(1)=\Delta(1)$ and $\Delta \subseteq \Sigma'$ are connected by
a sequence of bistellar flips through fans with the same property.
This follows from the chamber description of the secondary fan, since
the set of all chambers corresponding to $\Sigma'$ containing $\Delta$
form a convex cone; see \cite[Section 5.3]{TriangulationsBook}.  By
the above argument, since $\pos(\Xi^-) \not \in \Delta$ for these
bistellar flips, they are trivial flips.
\end{proof}

In \cite{SzendroiCoxKatz} Szendr\H{o}i gave a counterexample to
\cite[Conjecture 6.2.8]{CoxKatz}, which shows that $i^*(\G)$ can be a
strict lower bound for $\nef(Y)$ in this context.  This was followed
by other related examples by Buckley in \cite{Buckley}.  Both Hassett,
Lin, and Wang \cite{HassettLinWang}, and Szendr\H{o}i
\cite{SzendroiAmple} also consider the simpler, non-Fano, example
where $X_{\Sigma}$ is the blow-up of $\mathbb P^4$ at two points,
where the equality $\nef(Y) \cap i^*(\Pic(X_{\Sigma})) = i^*(\G)$ also fails.

\begin{example} \label{e:Bl2p4}
Let $X_{\Sigma}$ be the blow-up of $\mathbb P^4$ at two
torus-invariant points $p_1$ and $p_2$, whose fan $\Sigma$ has rays
spanned by the columns of the matrix $V$ below.  The corresponding
torus-invariant divisors are $\{D_0,D_1,D_2,D_3,D_4,E_1,E_2 \}$, where
$E_i$ for $i=1,2$ is the exceptional divisor of the blow-up at $p_i$.
The Picard group of $X_{\Sigma}$ is three-dimensional, with the classes
of the torus-invariant divisors given by the columns of the matrix $G$:
\renewcommand{\arraystretch}{0.8}
\renewcommand{\arraycolsep}{2pt}
$$V = \left( \text{\footnotesize $ \begin{array}{rrrrrrr}
-1 & 1 & 0 & 0 & 0 & 0 & 0 \\
-1 & 0 & 1 & 0 & 0 & 0 & 0 \\
-1 & 0 & 0 & 1 & 0 & 0 & -1 \\
-1 & 0 & 0 & 0 & 1 & -1 & 0 \\
\end{array}$}
\right), \, \, \, \, 
G= \left( \text{\footnotesize $ \begin{array}{rrrrrrr}
1 & 1 & 1 & 1 & 1 & 0 & 0 \\
0 & 0 & 0 &  0& 1 & 1 & 0 \\
0 & 0 & 0 & 1 & 0 & 0 & 1 \\
\end{array}$}
\right).
$$

The effective cone of $X_{\Sigma}$ is the positive orthant in these
coordinates, and is generated by $D_0=D_1=D_2$, $E_1$, and $E_2$.  The
nef cone is generated by $D_3$, $D_4$, and $F=(1,1,1) =
D_3+E_1=D_4+E_2$.  This is the triangle labelled $\mathcal B$ in \cite[p3]{SzendroiAmple}.

If $\Delta$ is the whole $3$-skeleton of $\Sigma$, then $\G =
\nef(X_{\Sigma}) = \pos(D_3, D_4, F)$, and $\L = \pos(D_3, D_4, F,
D_0)$, which is the union of the triangles labelled $\mathcal B$ and
$\mathcal C$ in \cite[p3]{SzendroiAmple}.  In this case $\Gprime=\L$.  

The group $A_1(X_{\Delta}) \cong A_{1}(X_{\Sigma}) \cong \mathbb Z^3$,
as it is dual to the class group, which is three-dimensional.  A basis
is given by $\{V(012), V(01p_1 ), V(01p_2) \}$, where by $V(01p_1)$ we
mean the orbit-closure corresponding to the three-dimensional cone
spanned by the rays corresponding to $D_0, D_1$, and $E_1$.  The set
$W = \{ \mathbf{w} \in \Hom(A_1(X_{\Delta}), \mathbb R) :
  \mathbf{w}(V(\sigma)) \geq 0 \text{ for all } \sigma \in \Delta(3)
  \}$ of Definition~\ref{d:Wdefn} is isomorphic to $\mathbb R_{\geq
    0}^3$ under the map that sends $\mathbf{w}$ to
  $(\mathbf{w}(V(012)), \mathbf{w}(V(01p_1)), \mathbf{w}(V(01p_2)))$.
  Explicitly, for $(\alpha,\beta,\gamma) \in \mathbb R^3_{\geq 0}$ we
  have:

\bigskip

\begin{tabular}{l|l}
$\mathbf{w}(V(\sigma))$ & $V(\sigma)$ \\ \hline
$\alpha$ & $V(01p_1)$, $V(02p_1)$, $V(03p_1)$, $V(12p_1)$, $V(13p_1)$, $V(23p_1)$ \\
$\beta$ & $V(01p_2)$, $V(02p_2)$, $V(04p_2)$, $V(12p_2)$, $V(14p_2)$, $V(24p_2)$ \\
$\gamma$ & $V(012)$\\
$\alpha+\gamma$ & $V(024)$, $V(014)$, $V(124)$ \\
$\beta+\gamma$ & $V(023)$, $V(013)$, $V(123)$ \\
$\alpha+\beta+\gamma$ & $V(034)$, $V(134)$, $V(234)$ \\\\
\end{tabular}

\bigskip 

The cone $\Uw$ is 
\begin{multline*}
\Uw= \pos((1,1,1), (\beta, \beta, -\alpha-\gamma),
(\alpha,-\beta-\gamma, \alpha) ) \\= \pos( F, \beta D_4 -(\alpha+\gamma)
E_2, \alpha D_3 -(\beta+\gamma)E_1).
\end{multline*} 
The intersection is 
$$\U = \cap_{\mathbf{w} \in W} \Uw = \pos((1,1,1), (1,0,1), (1,1,0), (1,0,0)
) = \mathcal B \cup \mathcal C.$$
Note, however that for any one fixed
$\mathbf{w}$ we have $\Uw \not \subseteq \Eff(X_{\Sigma})$.  This example is illustrated in cross-section in Figure~\ref{f:Bl2P4}.
\begin{figure}[h]
\center{\resizebox{10cm}{!}{\input{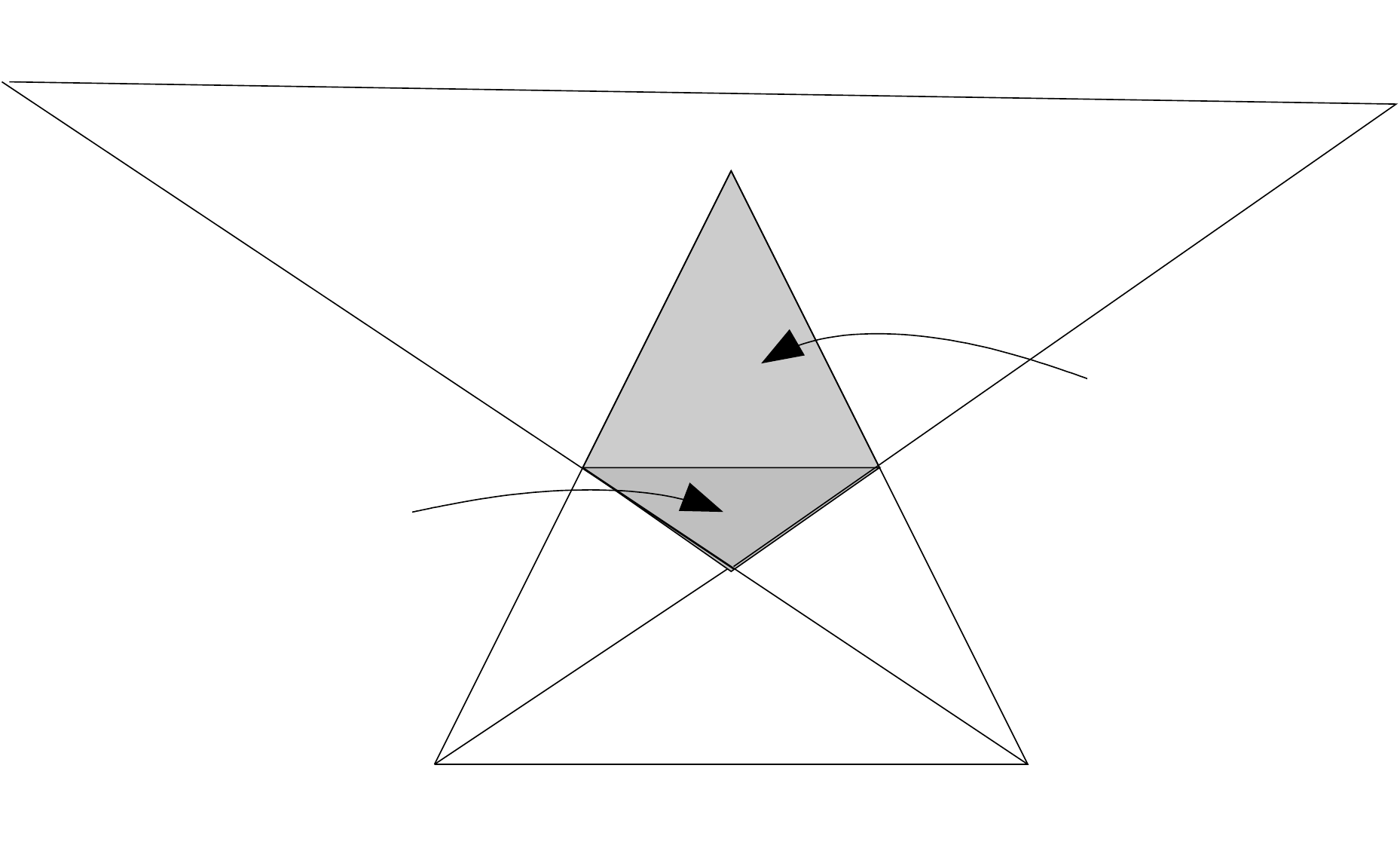_t}}}
\caption{\label{f:Bl2P4}}
\end{figure}
\end{example}

Let $D$ be the image of $Y$ in $\mathbb P^4$ under the blow-down map.
We assume that $Y$ is the strict transform of $D$, and that $D$
contains $p_1$ and $p_2$.  Let $L$ be the line joining $p_1$ and $p_2$
in $\mathbb P^4$.  In \cite{HassettLinWang} Hassett, Lin, and Wang
show that if $D$ intersects $L$ in three distinct points, and contains
a finite number of lines meeting these three points then $\nef(Y) =
\mathcal B \cup \mathcal C$.  This is extended in
\cite{SzendroiAmple}, where Szendr\H{o}i classifies when $\nef(Y)$ is
$\mathcal B$ or $\mathcal B \cup \mathcal C$.  Szendr\H{o}i shows that
if $D$ contains the line $L$ then $\nef(Y) = \mathcal B$, if $D$ does
not contain $L$, but for each $i=1,2$ either $D$ contains a line
through $p_i$ or $L$ is tangent to $D$ at $p_i$ then $\nef(Y) =
\mathcal B \cup \mathcal C$, and in all other cases there is a strict
inclusion $\mathcal B \cup \mathcal C \subsetneq \nef(Y)$.  In the
first of these cases $Y$ contains some torus-fixed points of
$X_{\Sigma}$.  If we add the corresponding cone to $\Delta$ we get $\G
= \L = \mathcal B$.
The last case shows that $\U$ (as opposed to $\Uw$) is not always an upper
bound for $\nef(Y)$ when the rank of $A_{n-d}(X_{\Delta})$ is greater
than one.

The details of this example, and the others in \cite{SzendroiCoxKatz},
\cite{Buckley}, \cite{SzendroiAmple}, and \cite{HassettLinWang} are
contained in the Macaulay 2 package \cite{GLUpackage}.

\subsection{Subvarieties of Picard rank two smooth toric varieties}
\label{ss:PicardRankTwo}

In this section we consider subvarieties of smooth toric varieties
with rank-two Picard groups.  A projective such $X_{\Sigma}$ is
determined by integers $s \geq 2$ and a sequence $0 \leq a_1 \leq a_2
\leq \dots \leq a_r$ as follows.  Let $\mathbf{e}_1,\dots,
\mathbf{e}_n$ be a basis for $\mathbb R^n$, for $n=r+s-1$.  We may
choose coordinates so that $\Sigma$ has rays spanned by $\mathbf{v}_0=
-\sum_{i=1}^{s-1} \mathbf{e}_i+\sum_{j=1}^r a_j \mathbf{e}_{j+s-1}$,
$\mathbf{v}_i=\mathbf{e}_i$ for $1 \leq i \leq s-1$,
$\mathbf{u}_0=-\sum_{j=1}^r \mathbf{e}_{j+s-1}$, and $\mathbf{u}_j =
\mathbf{e}_{j+s-1}$ for $1 \leq j \leq r$.  The cones of $\Sigma$ are
then $\sigma_{ij} = \pos(\mathbf{v}_k, \mathbf{u}_l : k \neq i, l \neq
j )$ for $0 \leq i \leq s-1$, $0 \leq j \leq r$.  See
\cite{Kleinschmidt} for details.

The Picard group of $X_{\Sigma}$ is isomorphic to $\mathbb Z^2$.  We
denote by $D_i$ the divisor corresponding to the ray through
$\mathbf{v}_i$, and $E_i$ the divisor corresponding to the ray through
$\mathbf{u}_i$.  We can choose a basis of $\mathbb Z^2$ so that $[D_i]
= (1,0)$ for $0 \leq i \leq s-1$, and $[E_i]=(-a_i,1)$, where we set
$a_0=0$.  The nef cone of $X_{\Sigma}$ is then the positive orthant
$\pos((1,0), (0,1))$.  See Figure~\ref{f:codim2}.

\begin{figure}
\center{\resizebox{3.5cm}{!}{\includegraphics{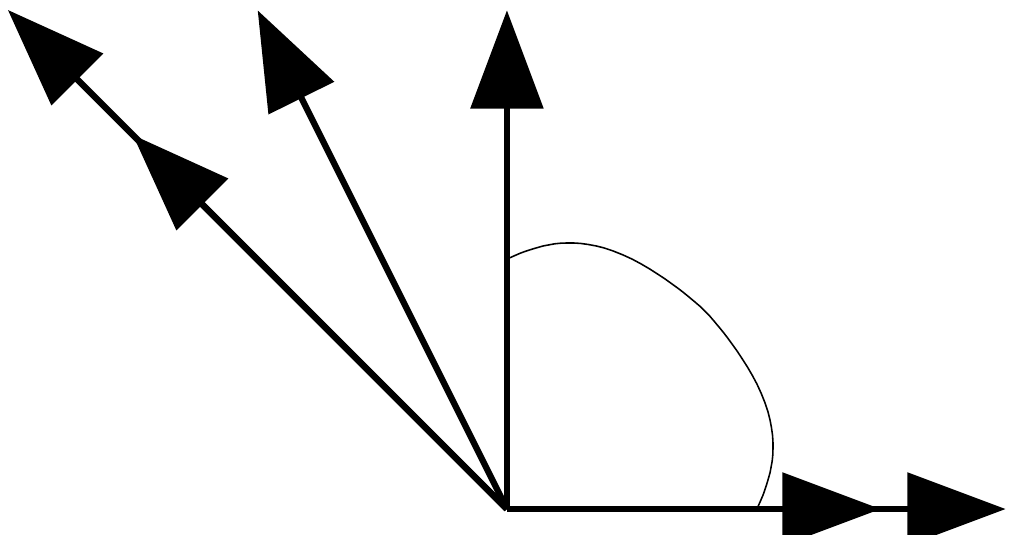}}}
\caption{\label{f:codim2}}
\end{figure}

Let $\Delta_k$ be the subfan of $\Sigma$ consisting of just those
cones of $\Sigma$ of dimension at most $k$ for fixed $k>0$.  A generic
$k$-dimensional subvariety of $X_{\Sigma}$ is contained in
$X_{\Delta_k}$.  We then have the following relation between the
cones $\mathcal G_{\Delta_k}$ and  $\mathcal L_{\Delta_k}$.

\begin{proposition} \label{p:codim2GL}
The cone $\mathcal G_{\Delta_k}$ equals $\pos( (1,0), (-a_i,1))$, where $i=0$ if $k
\geq r$, and $r-k$ if $k<r$.  The cone $\mathcal L_{\Delta_k}$ equals
$\mathcal G_{\Delta_{k-1}}$.
\end{proposition}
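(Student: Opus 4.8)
The plan is to compute both cones directly from the combinatorial descriptions in Section~\ref{s:Cones}, working with the explicit coordinates on $\Pic(X_\Sigma)_{\mathbb R}\cong\mathbb R^2$ in which $[D_i]=(1,0)$ for $0\le i\le s-1$ and $[E_j]=(-a_j,1)$ with $a_0=0$. First I would record the rays of $\Delta_k$: these are all the $\mathbf v_i$ ($0\le i\le s-1$) and all the $\mathbf u_j$ ($0\le j\le r$), since $k>0$, so $\Delta_k(1)=\Sigma(1)$. The cones of $\Delta_k$ are exactly the faces of the $\sigma_{ij}$ of dimension $\le k$; a cone of $\Sigma$ is spanned by a subset of $\{\mathbf v_i\}\cup\{\mathbf u_j\}$ that omits at least one $\mathbf v$ and at least one $\mathbf u$, and every such subset of size $\le k$ occurs. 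I would use Part~\eqref{i:Gasintersection} of Proposition~\ref{p:Gcone} for $\G$ and Part~\eqref{i:modifiedGcone} of Corollary~\ref{c:Lcone} for $\L$.

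For $\mathcal G_{\Delta_k}=\bigcap_{\sigma\in\Delta_k}\pos([D_i]:i\notin\sigma)$, the binding constraints come from the maximal cones of $\Delta_k$, i.e. the $k$-dimensional faces of the $\sigma_{ij}$. Such a face is obtained from $\sigma_{ij}=\pos(\mathbf v_p,\mathbf u_q:p\ne i,q\ne j)$ by deleting rays; to make the complement-cone $\pos$ as small as possible we delete rays so as to keep as many $\mathbf v$'s and as few $\mathbf u$'s outside as possible — concretely, the relevant faces are $\pos(\mathbf v_p:p\ne i)$ together with $k-(s-1)$ of the $\mathbf u_q$, when $k\ge s-1$, or $k$ of the $\mathbf v_p$ when $k<s-1$. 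Either way the complement set of rays always includes some $\mathbf v_i$ (hence $(1,0)$), and includes the $\mathbf u_j$'s that were \emph{not} selected. The intersection $\bigcap\pos((1,0),(-a_j,1):j\in S)$ over all index sets $S$ of $\mathbf u$'s that can appear as a complement is therefore $\pos((1,0),(-a_{i^*},1))$ where $a_{i^*}$ is the largest $a_j$ forced to lie in \emph{every} such complement set — and a short count shows this is $j=0$ when $k\ge r$ (all $\mathbf u_j$ can be avoided, so the constraint from $(-a_0,1)=(0,1)$ survives as the extreme ray, giving the first quadrant) and $j=r-k$ when $k<r$. The convexity of $\bigcap\pos((1,0),(-a_j,1):j\in S)$ with $a_j$ increasing is the elementary fact that $\pos((1,0),(-a,1))\subseteq\pos((1,0),(-a',1))$ iff $a\le a'$, so the intersection is governed by the maximal $a_j$ that is unavoidable.

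For $\mathcal L_{\Delta_k}$ I would argue that $\mathcal L_{\Delta_k}=\mathcal G_{\Delta_{k-1}}$ by comparing the two intersections. By Corollary~\ref{c:Lcone}\eqref{i:modifiedGcone}, $\mathcal L_{\Delta_k}=\bigcap_{\sigma\in\Delta_k^\circ}\pos([D_i],\pm[D_j]:i\in\starr^1(\sigma)\setminus\sigma,\ j\in\Delta_k(1)\setminus\starr^1(\sigma))$. The key local computation is: for $\sigma$ a cone of $\Sigma$ (viewed in $\Delta_k$), the rays of $\starr^1(\sigma)\setminus\sigma$ inside $X_\Sigma$ are exactly the rays that can be added to $\sigma$ to stay in $\Sigma$; since $\Sigma$ is the Kleinschmidt fan, a cone $\sigma$ spanned by $\{\mathbf v_p:p\in A\}\cup\{\mathbf u_q:q\in B\}$ (with $|A|\le s-2$ or $|B|\le r-1$ as appropriate so that $\sigma\in\Sigma$) has $\starr^1(\sigma)$-rays given by all $\mathbf v_p$, $p\notin A$ (if still allowed) and all $\mathbf u_q$, $q\notin B$. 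Because $\Sigma$ is smooth and complete of Picard rank two, one finds that for a non-maximal cone $\sigma\in\Delta_k^\circ$ (so $\dim\sigma\le k-1$) the set $\starr^1(\sigma)\setminus\sigma$ is precisely the complement-of-$\sigma$ ray set that appears in the $\mathcal G_{\Delta_{k-1}}$ intersection, while the "$\pm[D_j]$" directions $j\in\Delta_k(1)\setminus\starr^1(\sigma)$ turn out to contribute nothing new because every such $[D_j]$ is already in the span — indeed in rank two, once a cone $\pos((1,0),(-a,1))$ is pinned down, adding $\pm$ of any lattice direction either leaves it unchanged or makes it all of $\mathbb R^2$, and I would check the latter never happens here. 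Matching the index bookkeeping — cones of $\Delta_k^\circ$ have dimension $\le k-1$, i.e. are exactly the cones of $\Delta_{k-1}$, and the relevant complement ray sets coincide — yields $\mathcal L_{\Delta_k}=\mathcal G_{\Delta_{k-1}}$, which by the first part equals $\pos((1,0),(-a_{i'},1))$ with $i'=0$ if $k-1\ge r$ and $i'=r-(k-1)=r-k+1$ if $k-1<r$.

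The main obstacle I anticipate is the bookkeeping in the $\mathcal L$ computation: verifying carefully that for every non-maximal cone $\sigma$ of $\Delta_k$ the extra generators $\pm[D_j]$ for $j\notin\starr^1(\sigma)$ do not enlarge the cone past what the $\G$-type description gives, and that $\starr^1(\sigma)\setminus\sigma$ in $\Delta_k$ agrees with "$\Delta_{k-1}(1)\setminus\sigma$" in the right sense — this requires a precise understanding of which subsets of rays span cones of the Kleinschmidt fan $\Sigma$, and a case split on whether $k\le s-1$, $s-1<k<r$, or $k\ge r$ (these determine whether the dimension bound is saturated on the $\mathbf v$-side, the $\mathbf u$-side, or both). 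The rest is routine two-dimensional convex geometry. I would present the $k\ge r$ case first as a clean base case and then handle $k<r$ by the same argument with the relevant $\mathbf u$-index shifted.
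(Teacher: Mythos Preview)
Your approach for $\mathcal G_{\Delta_k}$ is essentially the paper's: parametrize cones by the complement index sets and minimize $\max\{j\in J\}$ subject to the size constraint. That part is fine.

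The $\mathcal L_{\Delta_k}$ argument has a genuine gap. Your claim that for every $\sigma\in\Delta_k^\circ$ the set $\starr^1(\sigma)\setminus\sigma$ equals all of $\Delta_k(1)\setminus\sigma$ is false precisely when $\sigma$ already contains all but one of the $\mathbf v_i$ (i.e.\ $|I|=1$) or all but one of the $\mathbf u_j$ (i.e.\ $|J|=1$): adding the missing $\mathbf v_{i_0}$ (resp.\ $\mathbf u_{j_0}$) would produce a cone containing all $\mathbf v$'s (resp.\ all $\mathbf u$'s), which is not a cone of $\Sigma$. For such $\sigma$ the set $\Delta_k(1)\setminus\starr^1(\sigma)$ is nonempty and the $\pm$ generators do change the constraint. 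Your dichotomy ``leaves it unchanged or makes it all of $\mathbb R^2$'' is also wrong: adding $\pm(1,0)$ to $\pos((1,0),(-a,1))$ yields the upper half-plane, and adding $\pm(-a_{j_0},1)$ yields the half-plane $\{x+a_{j_0}y\ge 0\}$, not the whole plane. So term-by-term your comparison only shows $\mathcal L_{\Delta_k}\supseteq\mathcal G_{\Delta_{k-1}}$ (each $\mathcal L$-constraint contains the corresponding $\mathcal G$-constraint), not equality.

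The paper closes this gap by separating the three cases $|I|,|J|\ge 2$, $|I|=1$, $|J|=1$ and computing each intersection explicitly. The point is that when $k\ge r+1$ the $|I|,|J|\ge2$ constraints alone only cut down to $\pos((1,0),(-a_1,1))$, and one genuinely needs the $|J|=1$ half-plane $\{x\ge 0\}$ (from $j_0=0$) to reach $\pos((1,0),(0,1))=\mathcal G_{\Delta_{k-1}}$. So the half-plane constraints are not redundant; you cannot avoid analyzing them. Once you split into these cases the computation is short, but the case split is the missing ingredient in your outline.
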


\begin{proof}
The cones of $\Delta_k$ are indexed by a pair $\{I, J\}$ with $I
\subset \{0,\dots,s-1\}$ and $J \subset \{0,\dots,r\}$ both nonempty and
$|I|+|J| \geq r+s+1-k$.  The corresponding cone is $\sigma_{IJ} =
\pos(\mathbf{v}_i, \mathbf{u}_j : i \not \in I, j \not \in J)$.  Thus
$\mathcal G_{\Delta_k} = \bigcap_{(I,J)} \pos( D_i, E_j : i \in I, j
\in J )$. The cone $\pos(D_i, E_j : i \in I, j \in J)$ equals
$\pos(D_0, E_k)$, where $k = \max \{ j : j \in J \}$.  Thus $\mathcal
G_{\Delta_k} = \pos(D_0, E_l)$, where $l = \min_{\{I,J\}} \max \{ j: j
\in J \}$ and the minimum is taken over all pairs $\{I,J\}$ with $I,J
\neq \emptyset$ and $|I|+|J| \geq r+s+1-k$.  When $k\geq r$ the
minimum is achieved at $I= \{ 0,\dots,r+s-k-1 \}$ and $J=\{0\}$, while
for $k <r$ the minimum is achieved at $I=\{0,\dots,s-1\}$ and
$J=\{0,\dots, r-k\}$, which implies the result.

The star of $\sigma_{IJ}$ contains all rays of $\Delta_k$ unless
$|I|=1$ or $|J|=1$, in which case the ray labelled by the singleton is
not contained in the star.  Thus $\mathcal L_{\Delta_k}$
equals the intersection of $\bigcap_{\{I,J\}} \pos(D_i, E_j : i \in I,
j \in J)$ with $\bigcap_{i \in \{0,\dots,s-1\}} \bigcap_{J \subset
  \{0,\dots,r\}} \pos(\pm D_i, E_j : j \in J)$ and $\bigcap_{I \subset
  \{0,\dots,s-1\}} \bigcap_{j \in \{0,\dots, r\}} \pos(D_i, \pm E_j :
i \in I)$, where the first intersection is over pairs $\{I,J\}$ with
$|I|, |J| \geq 2$ and $|I|+|J| \geq r+s-k$, and in the second and
third intersections we have $|J|\geq r+s-1-k$ and $|I| \geq r+s-1-k$.
Since $\pos(\pm D_i, E_j : j \in J)$ is the upper half plane for all
$j$, the second intersection is the upper half plane.  For the third
intersection, $\pos(D_i, \pm E_j : i \in I)$ is the halfspace $\{(x,y)
: x+a_jy \geq 0 \}$, so the intersection over all $j$ is the 
cone $\pos(D_0,-E_r)$.  Note that this case only occurs when $s+1
\geq r+s-k$, so $k\geq r+1$.  As the description of the first
intersection is as in the previous paragraph, we conclude that
$\mathcal L_{\Delta_k} = \pos(D_0,E_i)$, where $i=0$ for $k \geq r+1$,
and $i=r-k$ if $k \leq r$, so $\mathcal L_{\Delta_K}=\mathcal
G_{\Delta_{k-1}}$.
\end{proof}

\begin{example} \label{e:FDeltak}
We compute the cone $\mathcal F_{\Delta_k,\mathbf{w}}$ for $r=1$ and $s=n$.  The group $A_{n-k}(X_{\Delta_k}) \cong
\mathbb Z^2$.  One way to see this is to note that
$A_{n-k}(X_{\Delta_k}) \cong A_{n-k}(X_{\Sigma})$, since the
generators and the relations for this group only depend on $\Delta_k$,
and by \cite[p106]{Fulton} this is equal to the
degree-$k$ part of the ring $\mathbb Z[D_0,E_0]/\langle D_0^n,
E_0(E_0-a_1D_0) \rangle$, which is rank two.  We choose the basis
$D_0^k, D_0^{k-1}E_0$ for this group, and choose $\mathbf{w}=(w_1,w_2) \in
\Hom(A_{n-k}(\Delta_k), \mathbb R) \cong \mathbb R^2$.  The classes of
$V(\sigma)$ for $\sigma \in \Delta_k(k)$ are $\{ D_0^k, D_0^{k-1}E_0,
D_0^{k-1}(E_0-a_1D_0) \}$ in this basis, so we must choose $w_1 \geq 0, w_2 \geq
a_1w_1 \geq 0$.  Now
\begin{align*}
\mathcal F_{\Delta_k, \mathbf{w}} & = \{ D = aD_0+bE_0 : \mathbf{w}(D \cdot
V(\tau)) \geq 0 \text{ for all } \tau \in \Delta_k(k-1) \} \\ & = \{
aD_0 + bE_0 : \mathbf{w}(a D_0^k + b D_0^{k-1}E_0) \geq 0,
\mathbf{w}((a+a_1b)D_0^{k-1}E_0) \geq 0, \\
& \hspace{1cm} \mathbf{w}(aD_0^{k-1}(E_0-a_1D_0)) \geq 0 \} \\
& = \{ aD_0 +bE_0 : aw_1+bw_2 \geq 0, (a+a_1b)w_2 \geq 0,  a(w_2-a_1w_1) \geq 0 \}\\
& = \{ aD_0 + bE_0 : aw_1+bw_2 \geq 0, a+a_1b \geq 0, a \geq 0 \},\\
\end{align*}
where the last equality comes from the fact that $w_2, w_2-a_1w_1 \geq
0$.  The middle inequality is redundant, so this is the cone spanned
by $E_0$ and $w_2D_0-w_1E_0$.  If $k \geq 2$ we have $\mathcal
G(\Delta_k) = \mathcal L(\Delta_k) = \pos(E_0, D_0)$ by
Proposition~\ref{p:codim2GL}, so the inequality $\mathcal L(\Delta_k)
\subseteq \mathcal F_{\Delta_k,w}$ is strict unless $w_1=0$.  
\end{example}

\begin{remark}
Example~\ref{e:FDeltak} shows that one does not always have
$A_{n-d}(X_{\Delta}) \cong \mathbb Z$ for $\Delta$ a $d$-dimensional
fan in $\mathbb R^n$, and that the cone $\Uw$ depends on the choice of $\mathbf{w}$.
The intersection $\mathcal F_{\Delta_k}$ of all $\mathcal
F_{\Delta_k,\mathbf{w}}$ as $\mathbf{w}$ varies is $\pos(D_0,E_0) = \mathcal
G_{\Delta_k} = \mathcal L_{\Delta_k}$.  
\end{remark}

\section{Mori dream spaces}
\label{s:MDS}

In this section we show that if $Y$ is a Mori dream space then there
is an embedding $i \colon Y \rightarrow X_{\Delta}$ for which $i^*(\G)
= \nef(Y)$.  Recall from \cite{HuKeel} that a projective $\mathbb Q$-factorial
variety $Y$ with $\Pic(Y) \cong \mathbb Z^r$ is a Mori dream space if
the Cox ring
$$\Cox(Y) = \bigoplus_{u \in \mathbb Z^r} H^0(Y, L_1^{\otimes
  u_1}\otimes \dots \otimes L_r^{\otimes u_r})$$ is finitely
generated, where $L_1,\dots, L_r$ form a basis for $\Pic(Y)$.
Important examples include log Fano varieties (see \cite{BCHM}).

The ring $\Cox(Y)$ has a $\mathbb Z^r$-grading given by the Picard
group of $Y$.  Choose a graded presentation for $\Cox(Y)$:
$$\Cox(Y) \cong \K[z_1,\dots,z_N]/I,$$ where $I$ is homogeneous with
respect to the $\Pic(Y)$ grading.  We denote by $V(I)$ the affine
subscheme of $\mathbb A^N$ defined by the ideal $I$.  The action of
the torus $T = \Hom(\Pic(Y), \K^*) \cong (\K^*)^r$ on $\mathbb A^N$
descends to an action on $V(I)$.  Linearizations of this action
correspond to characters of $T$, and thus to line bundles on $Y$.  If
$L$ is an ample line bundle, then 
\begin{equation} \label{e:CoxGIT}
Y = \Proj(\oplus_{k\geq 0}
\Cox(Y)_{kL}) = V(I) \git_{L} T.
\end{equation}
 This gives an embedding $i: Y = V(I) \git_LT \rightarrow \mathbb A^N
 \git_L T$.  This latter space is a normal toric variety which we
 denote by $X_{\Sigma}$.  Let $\Delta$ be the subfan of $\Sigma$
 containing those cones for which the corresponding $T$-orbit closure
 intersects $Y$.  The embedding $i$ restricts to an embedding $i: Y
 \rightarrow X_{\Delta}$.

\begin{proposition}\label{MDS}
Let $Y$ be a Mori Dream Space, and let $i \colon Y \rightarrow
X_{\Delta}$ be the toric embedding described above.  Then 
$$\nef(Y) = i^*(\G) = i^*(\L).$$
\end{proposition}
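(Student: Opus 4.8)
The inclusions $i^*(\G)\subseteq i^*(\L)\subseteq\nef(Y)$ are already supplied by Theorem~\ref{t:bound}, so everything reduces to proving $\nef(Y)\subseteq i^*(\G)$. Part~\ref{i:Gasintersection} of Proposition~\ref{p:Gcone} realizes $\G$ as a finite intersection of cones $\pos([D_i]:i\notin\sigma)$, hence $\G$ is a polyhedral cone and $i^*(\G)$ is again polyhedral, in particular closed. Since $\nef(Y)$ is the closure of the ample cone $\operatorname{Amp}(Y)$, I would reduce to showing $\operatorname{Amp}(Y)\subseteq i^*(\G)$. So I fix an ample Cartier divisor $A$ on $Y$ and aim to exhibit a class in $\G$ that pulls back to $[A]$ along $i$.

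The construction mimics the definition of the embedding $i$. I would regard the class of $A$ as a character of $T=\Hom(\Pic(Y),\K^*)$, linearize the $T$-action on $\mathbb A^N$ accordingly, and form $X_{\Sigma_A}:=\mathbb A^N\git_A T=\Proj\bigoplus_{k\ge 0}\K[z_1,\dots,z_N]_{kA}$. This is a projective toric variety, because its ring of invariants is the degree-zero part of $\K[z_1,\dots,z_N]$, which is $\K$: every $z_i$ has nonzero class lying in the pointed cone $\overline{\Eff}(Y)$ (a nonzero homogeneous element of $\Cox(Y)$ has effective, hence nonzero, degree), so the only monomial of degree $0$ is the constant. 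Because $A$ is ample on $Y$, the computation in \eqref{e:CoxGIT} goes through with $L$ replaced by $A$, giving $V(I)\git_A T=\Proj\bigoplus_k\Cox(Y)_{kA}=Y$; the $T$-equivariant inclusion $V(I)\hookrightarrow\mathbb A^N$ then descends to a closed embedding $Y\hookrightarrow X_{\Sigma_A}$ under which the natural ample class of $X_{\Sigma_A}$ restricts to a positive multiple of $[A]$ --- which is all I need, since $i^*(\G)$ is a cone.

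It remains to see that this ambient toric variety contributes to $\G$, i.e.\ that $\Delta\subseteq\Sigma_A$ and that the resulting embedding $Y\hookrightarrow X_\Delta$ coincides with the given $i$. This is the one place where the Mori-dream-space structure is genuinely used. Both $A$ and $L$ lie in $\operatorname{Amp}(Y)$, which by \cite{HuKeel} is the interior of a single Mori chamber of $\overline{\operatorname{Mov}}(Y)$; for every $\chi$ in this chamber $V(I)\git_\chi T=Y$, with the intervening birational maps equal to the identity. The GIT fan of $\mathbb A^N$ may subdivide $\operatorname{Amp}(Y)$ further, but crossing such a wall realizes $X_{\Sigma_\chi}$ and $X_{\Sigma_{\chi'}}$ as two small $\mathbb Q$-factorial modifications of a common toric variety, and since $Y$ is not modified it is disjoint from the modified locus; hence a cone whose orbit closure meets $Y$, together with that orbit closure, is unaffected by the wall crossing. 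By connectedness of $\operatorname{Amp}(Y)$, the subfan of cones whose orbit closure meets $Y$, and the induced embedding of $Y$ into the corresponding open toric subvariety, are constant on $\operatorname{Amp}(Y)$: evaluating at $\chi=L$ identifies this subfan with $\Delta$ and the embedding with $i$, while evaluating at $\chi=A$ yields $\Delta\subseteq\Sigma_A$. Now $X_\Delta$ is quasi-projective, being an open subvariety of the projective $X_\Sigma$, so Hypothesis~\ref{hyp:quasiproj} holds and Part~\ref{i:Gunionnef} of Proposition~\ref{p:Gcone}, applied with $\Sigma=\Sigma_A$, gives $i_{\Sigma_A}^*(\nef(X_{\Sigma_A}))\subseteq\G$. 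Restricting the natural ample class of $X_{\Sigma_A}$ to $X_\Delta$ therefore produces a class in $\G$ whose pullback along $i$ is a positive multiple of $[A]$, so $[A]\in i^*(\G)$; hence $\nef(Y)\subseteq i^*(\G)$ and all three cones coincide.

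\textbf{Main obstacle.} The only non-formal ingredient is the claim $\Delta\subseteq\Sigma_A$ --- that the set of toric orbit closures of the ambient GIT quotient met by $Y$ does not change as the linearization varies inside $\operatorname{Amp}(Y)$. This is exactly where one invokes the Hu--Keel dictionary between variation of GIT and small modifications, and the subtlety to keep in mind is that the GIT chamber structure of $\mathbb A^N$ can be strictly finer than the Mori chamber structure of $Y$, so one must know that these extra walls correspond to flops of the ambient toric variety that avoid $Y$. Granting that, everything else is bookkeeping with Theorem~\ref{t:bound} and Propositions~\ref{p:Gcone} and \ref{p:toriccontainment}.
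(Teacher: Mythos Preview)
Your proposal is correct and follows essentially the same route as the paper: both arguments reduce to showing $\Delta\subseteq\Sigma_A$ for every ample $A$ on $Y$, invoke Hu--Keel to get constancy within the ample cone, and then apply Part~\ref{i:Gunionnef} of Proposition~\ref{p:Gcone}. The paper's execution of the key step is slightly more direct---it works with semistable loci on $V(I)$, showing that $(\mathbb A^N)^{ss}_{L'}\cap V(I)=(\mathbb A^N)^{ss}_{L}\cap V(I)$ is equivalent to $\Delta\subseteq\Sigma_{L'}$, and identifies the entire GIT chamber with $i^*(\G)$ at once---whereas you argue point-by-point via a geometric wall-crossing picture; but the content is the same.
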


\begin{proof}
It suffices to show that $\nef(Y) = i^*(\G)$, as the equality with
$i^*(\L)$ then follows from Theorem~\ref{t:bound}.  In
\cite[Proposition 2.9]{HuKeel} Hu and Keel show that the description
of Equation~\ref{e:CoxGIT} satisfies the condition of \cite[Theorem
  2.3]{HuKeel}, and so the Mori chambers of $Y$ are equal to the GIT
chambers.  Since $\nef(Y)$ is a Mori chamber, the proof reduces to
showing that $i^*(\G)$ is a GIT chamber of the GIT description for $Y$
given in Equation~\ref{e:CoxGIT}.  

For $L \in \Pic(Y)$ we denote by $V(I)^{ss}_L$ the semistable locus
for $V(I)$ with respect to the linearization of $T$ labelled by $L$.
The GIT chamber of $Y$ corresponding to $L$ is $\pos(\{ L' :
V(I)^{ss}_{L'} = V(I)^{ss}_L \}) \subseteq \Pic(Y)_{\mathbb R}$.  Now
$V(I)^{ss}_L$ is the intersection of $V(I)$ with $(\mathbb
A^N)^{ss}_L$.  The GIT chamber of $\mathbb A^N \git T$ with respect to
the linearization by $L$ equals the nef cone of the toric variety
$\mathbb A^N \git_L T$, whose fan we denote by $\Sigma_L$.  Thus the
GIT chamber of $V(I)  \git T$ containing a fixed ample $L$ is the union of
the nef cones of those toric varieties $X_{\Sigma_{L'}}=\mathbb A^N
\git_{L'} T$ for which $(\mathbb A^N)^{ss}_{L'} \cap V(I) = (\mathbb
A^N)^{ss}_{L} \cap V(I)$.  To show that this equals $i^*(\G)$ it
suffices by Part~\ref{i:Gunionnef} of Proposition~\ref{p:Gcone} to
show that this condition is equivalent to $\Delta \subseteq
\Sigma_{L'}$.

Let $\mathbf{v}_i$ be the first lattice point on the $i$th ray of $\Sigma_L$.
If $\sigma \in \Delta$ is a maximal cone, then $Y \cap \mathcal
O(\sigma) \neq \emptyset$, so then there is $x \in (\mathbb A^N)^{ss}
\cap V(I)$ with $\sigma = \pos ( \mathbf{v}_i : x_i =0)$.  Now note that for
  any $L'$ with $(\mathbb A^N)^{ss}_{L'} \cap V(I) = (\mathbb
  A^N)^{ss}_{L} \cap V(I)$ this $x$ lies in $(\mathbb A^N)^{ss}_{L'}$, so 
$\pos ( \mathbf{v}_i : x_i =0 ) \in \Sigma_{L'}$.  Thus $\Delta \subseteq \Sigma_{L'}$.  

Conversely, suppose that $\Delta \subseteq \Sigma_{L'}$ for some $L'$.
If $x \in (\mathbb A^N)^{ss}_{L} \cap V(I)$, then $\pos(\mathbf{v}_i :
x_i=0 ) \in \Delta$, so $\pos(\mathbf{v}_i : x_i =0) \in \Sigma_{L'}$.
This means that $x \in (\mathbb A^N)^{ss}_{L'}$, 
 and so $(\mathbb A^N)^{ss}_{L} \cap V(I) \subseteq
(\mathbb A^N)^{ss}_{L'} \cap V(I)$.   

 Now $Y=V(I) \git_{L}T$ is the closure of $Y \cap T$ in
 $X_{\Sigma_L}$, and similarly $Y'= V(I) \git_{L'} T$ is the closure
 of $Y \cap T$ in $X_{\Sigma_{L'}}$.  Since the closure of $Y \cap T$
 in $X_{\Sigma_L}$ is contained in $X_{\Delta}$, the same must be true
 for the closure of $Y \cap T$ in $X_{\Sigma_{L'}}$.  If, however,
 this inclusion $(\mathbb A^N)^{ss}_{L} \cap V(I) \subseteq (\mathbb
 A^N)^{ss}_{L'} \cap V(I)$ were proper, then there would be a point $x
 \in (\mathbb A^N)^{ss}_{L'} \cap V(I)$ with $\sigma
 =\pos(\mathbf{v}_i : x_i=0)$ satisfying $\sigma \in \Sigma_{L'}$,
 $\sigma \not \in \Delta$, and $(V(I) \git_{L'} T ) \cap \mathcal
 O(\sigma) \neq \emptyset$, where the intersection takes place in
 $X_{\Sigma_{L'}}$.  From this contradiction we conclude that
 $(\mathbb A^N)^{ss}_{L} \cap V(I) = (\mathbb A^N)^{ss}_{L'} \cap
 V(I)$.  Thus $(\mathbb A^N)^{ss}_{L} \cap V(I) \subseteq (\mathbb
 A^N)^{ss}_{L'} \cap V(I)$ if and only if $\Delta =
 \Sigma_{L'}$ as required.
\end{proof}

\begin{remark}
We note that this combinatorial description of the nef cone of a Mori
dream space was already known, using other language, by Berchtold and
Hausen; see \cite{BerchtoldHausen}.
\end{remark}

\begin{example}\label{embeddingexample}
Let $Y = \Bl_6(\mathbb P^2)$ be the blow-up of $\mathbb P^2$ at six
general points $p_1,\dots,p_6$.  This is a del Pezzo surface of
degree $3$.  The Cox ring of $Y$ has a generator of degree $E_i$ for
each of the six exceptional divisors $E_i$, one of degree $\ell
-E_i-E_j$ (the strict transform of the line joining $p_i$ and $p_j$)
for each of the fifteen such lines, and a generator of degree $2\ell -
\sum_{k \neq i} E_k$ for each of the six conics through five of the
points.  The ideal $I$ of relations is generated in degree $2$ (see for example 
\cite{StillmanTestaVelasco}).  We thus get an embedding $i \colon Y
\rightarrow X_{\Delta} \subset X_{\Sigma} = \mathbb A^{27} \git
(\K^*)^7$ of the surface $Y$ into a $20$-dimensional toric variety
$X_{\Delta}$ with $27$ rays and $\nef(Y) = i^*(\G) = i^*(\L)$.

This contrasts with the embedding of Section~\ref{ss:delpezzo}, where
$Y$ is embedded into a $14$-dimensional toric variety with $20$ rays
and $i^*(\G)=i^*(\L) \subsetneq \nef(Y)$.  The missing generators in
this case correspond to the conics through sets of five points.  This
example illustrates that the bound on $\nef(Y)$ obtained depends on
the choice of toric embedding.
\end{example}

\section{Bounds for the nef cone of  $\MOn$}
\label{s:M0n}
In this section we apply the main theorem to obtain bounds for the nef
cone of the moduli space $\MOn$ of stable $n$-pointed curves of genus
zero.  Kapranov's construction of $\MOn$ as a Chow or Hilbert quotient
of the Grassmannian $G(2,n)$ by an algebraic torus
\cite{KapChow},\cite{KapVer} gives rise to a natural embedding of the
moduli space into a toric variety $X_{\Delta}$ \cite{Tevelev},
\cite{GMEquations}.  In Proposition~\ref{p:GLF} we give simple and
explicit descriptions of the three corresponding cones of divisors
$\mathcal{G}_{\Delta}(\MOn)$, $\mathcal{L}_{\Delta}(\MOn)$, and
$\mathcal{F}_{\Delta}(\MOn)$ that give lower and upper bounds for
$\nef(\MOn)$.
We also show that $\mathcal{F}_{\Delta}(\MOn)$ is the cone of
  $\operatorname{F}$-divisors, which the $\operatorname{F}$-Conjecture
  asserts is equal to $\nef(\overline{\operatorname{M}} _{0,n})$.  Finally, we
  propose that the cone $\mathcal{L}_{\Delta}(\MOn)$ is an equally
  likely, and useful,  polyhedral description of $\nef(\MOn)$.  

We first recall the $\operatorname{F}$-Conjecture.  See, for example,
\cite{invitation} for further background on $\MOn$.  An
     {\em{$\operatorname{F}$-curve}} on $\MOn$ is any curve that is
     numerically equivalent to a component of the locus of points in
     $\MOn$ corresponding to curves having at least $n-4$ nodes.  An
     {\em $\operatorname{F}$-divisor} on $\MOn$ is any divisor that
     nonnegatively intersects every $\operatorname{F}$-curve.  The
     {\em{$\operatorname{F}$-Conjecture}} on $\MOn$ says that a
     divisor is nef if and only if it is an
     $\operatorname{F}$-divisor.  The $\operatorname{F}$-conjecture
     can be stated for $\overline{M}_{g,n}$ for all $g$, and the case
     $g>0$ was shown in \cite{GKM} to be implied by the
     $\operatorname{F}$-conjecture for $S_g$-symmetric divisors on
     $\overline{M}_{0,g+n}$.

 In order to state Proposition \ref{p:GLF}, we use the following
 simplicial complex $\widetilde{\Delta}$.

\begin{definition} \label{d:R} 
Let $\mathcal{I}=\{I \subset \{1,\ldots,n\}: 1 \in I \text{ and }
|I|,|I^c| \geq 2\}$.  Let $\widetilde{\Delta}$ be the simplicial
complex on the vertex set $\mathcal I$ for which  $\sigma \in
\widetilde{\Delta}$ 
if for all $I,J \in \sigma$ we have $I \subseteq J$, $J \subseteq I$,
or $I \cup J = \{1,\ldots,n\}$.
\end{definition}

 The maximal cones of $\Delta$ have dimension $n-3$, and the simplices
of $\widetilde{\Delta}$ are in bijection with boundary
strata of $\overline{\operatorname{M}} _{0,n}$.  
 See, for example, \cite{ArbarelloCornalbaSurvey} or \cite{invitation}
 for a description of the boundary divisors $\delta_I$ on $\MOn$.

\begin{proposition}\label{p:GLF}  
  There are three cones $\mathcal{G}_{\Delta}(\MOn)$,
  $\mathcal{L}_{\Delta}(\MOn)$, and $\mathcal{F}_{\Delta}(\MOn)$ in
  $N^1(\MOn)_{\mathbb R}$ that bound $\nef(\MOn)$:
 $$\mathcal{G}_{\Delta}(\MOn) \subseteq \mathcal{L}_{\Delta}(\MOn)
  \subseteq \nef(\MOn) \subseteq \mathcal{F}_{\Delta}(\MOn).$$ These
  are described as follows.
\begin{enumerate}
\item $\mathcal{G}_{\Delta}(\MOn) = \bigcap_{\sigma \in  \widetilde{\Delta} } \pos(\delta_I : I  \in \mathcal{I} \setminus \sigma)$.
\item 
 $\mathcal{L}_{\Delta}(\MOn) =\bigcap_{\sigma \in
    \widetilde{\Delta} } \pos(\delta_I , \pm \delta_J : I, J \in \mathcal{I}
  \setminus \sigma,  \delta_I \cap \delta_K \ne \emptyset, \\   \forall
  K \in \sigma, \text{and } \delta_J \cap \delta_L = \emptyset \mbox{
    for some } L \in \sigma)$.
\item $\mathcal{F}_{\Delta}(\MOn) =
  \bigcap_{\sigma \in \widetilde{\Delta}, |\sigma|=n-4} \pos(\delta_I , \pm \delta_J : I,
  J \in \mathcal{I} \setminus \sigma,  \delta_I \cap \delta_K
  \ne \emptyset, \\
 \forall K \in \sigma, \text{and } \delta_J \cap
  \delta_L = \emptyset  \text{ for some } L \in \sigma)$.
\end{enumerate}
Moreover, $\mathcal{F}_{\Delta}(\MOn)$ is equal to the cone of
$\operatorname{F}$-divisors on $\MOn$.
\end{proposition}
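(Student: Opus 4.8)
The plan is to specialize Theorem~\ref{t:bound} and the combinatorial descriptions of Section~\ref{s:Cones} to Kapranov's toric embedding of $\MOn$. First I would recall, from \cite{KapChow}, \cite{KapVer}, \cite{Tevelev}, \cite{GMEquations}, that the Chow (equivalently Hilbert) quotient presentation of $\MOn$ produces an embedding $i\colon\MOn\hookrightarrow X_\Delta$ in which $\Delta$ is the simplicial fan over the complex $\widetilde\Delta$ of Definition~\ref{d:R} (the space of phylogenetic trees with its fine fan structure): its rays are indexed by $\mathcal I$, its cones by the simplices of $\widetilde\Delta$, and it is pure of dimension $d=n-3$. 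The properties I need are that $i^*\colon\Pic(X_\Delta)_{\mathbb R}\to N^1(\MOn)_{\mathbb R}$ is an isomorphism sending $[D_I]$ to $\delta_I$; that $\trop(\MOn\cap T)=|\Delta|$ with all tropical multiplicities equal to $1$ (these are the weights on $\trop G(2,n)$; see \cite{Tevelev}, \cite{GMEquations}, \cite{TropicalBook}); and that $i$ is a smooth tropical compactification in the sense of \cite{Tevelev}. Granting these, the hypotheses of Theorem~\ref{t:bound} hold with $\mathbf w\in W$ the all-ones vector, so putting $\mathcal{G}_{\Delta}(\MOn)=i^*(\G)$, $\mathcal{L}_{\Delta}(\MOn)=i^*(\L)$ and $\mathcal{F}_{\Delta}(\MOn)=i^*(\Uw)$ gives at once the chain $\mathcal{G}_{\Delta}(\MOn)\subseteq\mathcal{L}_{\Delta}(\MOn)\subseteq\nef(\MOn)\subseteq\mathcal{F}_{\Delta}(\MOn)$.

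Next I would set up the combinatorial dictionary and read off parts~(1)--(3). By Definition~\ref{d:R} the complex $\widetilde\Delta$ is flag: $\sigma\subseteq\mathcal I$ is a face as soon as every pair in $\sigma$ is. Under the standard bijection between simplices of $\widetilde\Delta$ and boundary strata of $\MOn$, one has $\delta_{I_1}\cap\dots\cap\delta_{I_k}\neq\emptyset$ if and only if $\{I_1,\dots,I_k\}\in\widetilde\Delta$; in particular $\delta_I\cap\delta_J\neq\emptyset$ if and only if $\{I,J\}$ is a face. Combining flagness with this gives, for $\sigma\in\widetilde\Delta$,
\begin{align*}
\starr^1(\sigma)\setminus\sigma&=\{I\in\mathcal I\setminus\sigma:\delta_I\cap\delta_K\neq\emptyset\ \text{for all}\ K\in\sigma\},\\
\Delta(1)\setminus\starr^1(\sigma)&=\{J\in\mathcal I\setminus\sigma:\delta_J\cap\delta_L=\emptyset\ \text{for some}\ L\in\sigma\},
\end{align*}
while $\Delta(d-1)$ is the set of $(n-4)$-vertex simplices of $\widetilde\Delta$. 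Now transport the descriptions of Section~\ref{s:Cones} along the isomorphism $i^*$: part~(1) is Part~\ref{i:Gasintersection} of Proposition~\ref{p:Gcone}; part~(2) is Part~\ref{i:modifiedGcone} of Corollary~\ref{c:Lcone}, where one also notes that for a maximal cone $\sigma$ we have $\starr^1(\sigma)\setminus\sigma=\emptyset$ and, as $\sigma$ is simplicial, every divisor class has a representative supported away from $\sigma$, so the corresponding positive hull is all of $\Pic(X_\Delta)_{\mathbb R}$ and intersecting over all of $\widetilde\Delta$ is the same as intersecting over $\Delta^{\circ}$; and part~(3) is Part~\ref{i:modifiedGconeforU} of Proposition~\ref{p:Ucone} after noting, by Proposition~\ref{chowisomorphism} (or a direct computation with the presentation of $A_*(X_\Delta)$ from \cite{FultonSturmfels}), that the relevant Chow group of $X_\Delta$ is cyclic, so $\U=\Uw$.

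For the final clause I would show that $\mathcal{F}_{\Delta}(\MOn)=i^*(\Uw)$ is exactly the cone of $\operatorname{F}$-divisors. By the dictionary, the codimension-one cones $\tau\in\Delta(d-1)$ are the $(n-4)$-vertex simplices of $\widetilde\Delta$, and $C_\tau:=V(\tau)\cap\MOn=\bigcap_{I\in\tau}\delta_I$ is the associated one-dimensional boundary stratum, a copy of $\overline{M}_{0,4}$. These strata are precisely the irreducible components of the locus of stable curves with at least $n-4$ nodes, so $\{[C_\tau]:\tau\in\Delta(d-1)\}$ is by definition exactly the set of numerical classes of $\operatorname{F}$-curves. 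For $D\in\Pic(X_\Delta)_{\mathbb R}$ with $D'=i^*(D)$, the identity
$$\mathbf w([D]\cdot[V(\tau)])=D'\cdot[C_\tau]\qquad(\tau\in\Delta(d-1))$$
is exactly what was established en route to the upper bound in the proof of Theorem~\ref{t:bound}: using that $\mathbf w$ is the multiplicity-one weight, that $\MOn$ is a smooth tropical compactification so $V(\tau)\cap\MOn$ is the \emph{reduced} curve $C_\tau$, and \cite[Lemma 3.2(1)]{SturmfelsTevelev}, the toric weighted intersection number of Definition~\ref{d:Fdef} coincides with the honest intersection number on $\MOn$. Hence $D\in\Uw$ iff $D'\cdot[C_\tau]\geq 0$ for all $\tau\in\Delta(d-1)$, iff $D'$ meets every $\operatorname{F}$-curve nonnegatively, iff $D'$ is an $\operatorname{F}$-divisor, which is the assertion.

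The main obstacle is exactly that identity $\mathbf w([D]\cdot[V(\tau)])=D'\cdot[C_\tau]$: it is the step that converts the combinatorially defined cone $\Uw$ into an intersection-theoretic condition on $\MOn$, and making it rigorous requires knowing that for the fine fan structure $\MOn\hookrightarrow X_\Delta$ genuinely is a smooth tropical compactification, that $\trop(\MOn\cap T)$ carries the all-ones weight, and that $V(\tau)\cap\MOn$ is the reduced one-dimensional stratum rather than a thickened or reducible cycle. If one prefers to sidestep smoothness of $X_\Delta$ itself, the identity can instead be obtained after passing to a smooth resolution and the strict transform of $\MOn$, exactly as in the proof of Theorem~\ref{t:bound}, the point being that here all of the multiplicities $\lambda_i$ appearing there are equal to $1$. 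By contrast, the combinatorial translations of the second paragraph, and the inputs on Kapranov's embedding and on the Chow group of $X_\Delta$ being cyclic, are routine or directly quotable.
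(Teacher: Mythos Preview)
Your proposal is correct and, for the chain of inclusions and the descriptions (1)--(3), proceeds exactly as the paper does: apply Theorem~\ref{t:bound} to the Kapranov embedding of Proposition~\ref{p:DeltaDef}, then transport Proposition~\ref{p:Gcone}(\ref{i:Gasintersection}), Corollary~\ref{c:Lcone}(\ref{i:modifiedGcone}) and Proposition~\ref{p:Ucone}(\ref{i:modifiedGconeforU}) along the isomorphism $i^*$. Your combinatorial dictionary for $\starr^1(\sigma)$ via flagness of $\widetilde\Delta$ is more explicit than the paper, which simply asserts the identification, but the content is the same.

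The one genuine difference is in the last step, identifying $\mathcal F_\Delta(\MOn)$ with the cone of $\operatorname{F}$-divisors. You argue geometrically/tropically: set $C_\tau=V(\tau)\cap\MOn$, observe it is the reduced irreducible one-dimensional boundary stratum, and extract from the proof of Theorem~\ref{t:bound} and \cite[Lemma 3.2(1)]{SturmfelsTevelev} the exact equality $\mathbf w([D]\cdot[V(\tau)])=i^*(D)\cdot[C_\tau]$. This is valid, but note that Theorem~\ref{t:bound} as written only yields an inequality; getting the equality requires the extra inputs you flag (irreducibility and multiplicity one of the intersection), which do hold here because the embedding is already a smooth tropical compactification. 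The paper instead uses the \emph{full} ring isomorphism $i^*\colon A^*(X_\Delta)\to A^*(\MOn)$ of Proposition~\ref{chowisomorphism}: since $X_\Delta$ is smooth, $[V(\tau)]=D_{I_1}\cdots D_{I_{n-4}}$, hence $i^*[V(\tau)]=\delta_{I_1}\cdots\delta_{I_{n-4}}=[C_\tau]$, and the ring isomorphism transports $D\cdot[V(\tau)]\geq 0$ directly to $i^*(D)\cdot[C_\tau]\geq 0$. This route is shorter and avoids re-entering the tropical machinery; your route stays closer to the viewpoint of Section~\ref{bounds} and would work even without the Feichtner--Yuzvinsky input beyond the rank-one statement, at the cost of the reducedness/irreducibility check.
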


The key to proving Proposition~\ref{p:GLF} is to recognize
$\widetilde{\Delta}$ as the simplicial complex corresponding to a fan
$\Delta \subset \mathbb R^{{n \choose 2}-n}$.  This fan, known as the
space of phylogenetic trees, arises in the consideration of $\MOn$ as
a Chow or Hilbert quotient, and there is an embedding of $\MOn$ into
the associated toric variety $X_{\Delta}$.  We summarize the necessary
information in the following proposition; see \cite{Tevelev} or
\cite[\S 5]{GMEquations} for more information.

\begin{proposition} \label{p:DeltaDef}
There is a collection $\{ \mathbf{r}_I : I \in \mathcal I\}$ of
lattice points in $\mathbb R^{{n \choose 2}-n}$ for which the
collection of cones $\{\pos(\mathbf{r}_I: I \in \sigma): \sigma \in
\widetilde{\Delta}\}$ is an $(n-3)$-dimensional polyhedral fan
$\Delta$.  The associated toric variety $X_{\Delta}$ is smooth.  In
addition, there is an embedding of $\MOn$ into $X_{\Delta}$ with $\MOn
\cap T = M_{0,n}$, where $T$ is the torus of $X_{\Delta}$, and the
support of $\Delta$ is the tropical variety of $M_{0,n} \subset T$.
\end{proposition}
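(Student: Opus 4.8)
The plan is to obtain Proposition~\ref{p:DeltaDef} by assembling three inputs in the spirit of \cite{Tevelev} and \cite{GMEquations}: Kapranov's description of $\MOn$ as a Chow (equivalently Hilbert) quotient of the Grassmannian $G(2,n)$ \cite{KapChow}, \cite{KapVer}; the identification of $\trop(G(2,n))$ with the space of phylogenetic trees (see \cite{TropicalBook}); and Tevelev's theory of tropical compactifications. The organizing principle is that $M_{0,n}$ is \emph{sch\"on}, so \emph{every} fan supported on $\trop(M_{0,n})$ yields a smooth tropical compactification, and the task is to recognize that the stable-curve compactification $\MOn$ is the one attached to the particular fan structure $\widetilde{\Delta}$. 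First I would fix the torus embedding: the Pl\"ucker embedding places $G(2,n)$ in $\mathbb P^{{n\choose 2}-1}$, the torus $(\K^*)^n$ acts by rescaling the $n$ homogeneous coordinates of $\mathbb P^{n-1}$, this descends to an effective rank-$(n-1)$ action of a torus $H$ whose Chow quotient of $G(2,n)$ is $\MOn$ with $M_{0,n}=G(2,n)^0/H$ (the superscript $0$ denoting the open locus of nonvanishing Pl\"ucker coordinates), and taking ratios of Pl\"ucker coordinates realizes $M_{0,n}$ as a closed subvariety of the torus $T$ whose cocharacter lattice is $N=\mathbb Z^{{n\choose 2}}/\mathbb Z^n$, the quotient by $\mathbf e_i\mapsto\sum_{j\ne i}\mathbf e_{ij}$; here $\rank N={n\choose 2}-n$, as required.

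Next I would match $\trop(M_{0,n})$, together with its Gr\"obner fan structure, with the combinatorics of $\widetilde{\Delta}$. By the Speyer--Sturmfels description of $\trop(G(2,n))$ (see \cite{TropicalBook}), this tropical variety is the space of tree metrics on $\{1,\dots,n\}$; its image in $N_{\mathbb R}$ is $\trop(M_{0,n})$, which is the support statement. To a split $A\mid B$ of $\{1,\dots,n\}$ with $|A|,|B|\ge 2$ one associates the cut-metric vector recording which pairs $\{i,j\}$ straddle the split; normalizing so that $1\in A=:I$, let $\mathbf r_I\in N$ be its class, for $I\in\mathcal I$. A set of splits occurs in a common tree precisely when the splits are pairwise compatible, and after the normalization $1\in I$, $1\in J$ this compatibility is exactly the condition $I\subseteq J$, $J\subseteq I$, or $I\cup J=\{1,\dots,n\}$ defining $\widetilde{\Delta}$; hence $\{\pos(\mathbf r_I:I\in\sigma):\sigma\in\widetilde{\Delta}\}$ is precisely the set of cones of the tree-space fan. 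Its maximal cones correspond to trivalent trees, each having $n-3$ internal edges, so $\Delta$ is pure of dimension $n-3=\dim M_{0,n}$, consistent with the structure theorem; that two such cones meet along a common face (so $\Delta$ really is a fan) is the classical fact that their intersection is the cone of the common coarsening of the two trees, and that this coincides with the Gr\"obner fan structure on $\trop(M_{0,n})$ is checked in \cite{GMEquations}.

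It then remains to verify smoothness and to produce the embedding. For smoothness I would show that for each trivalent tree $\sigma$ the vectors $\{\mathbf r_I:I\in\sigma\}$ extend to a $\mathbb Z$-basis of $N$, i.e.\ that trivalent-tree cones are unimodular; this can be done by induction on $n$, contracting a cherry of the tree and tracking how the cut-metric vectors restrict, and is carried out in \cite{GMEquations}. For the embedding, since $M_{0,n}$ is sch\"on, Tevelev's results \cite{Tevelev} show that the closure of $M_{0,n}$ in $X_{\Delta}$ is a tropical compactification, proper over $\Spec\K$ and meeting the dense torus in exactly $M_{0,n}$; combined with Kapranov's identification of the Chow quotient, this closure is $\MOn$, with the ray spanned by $\mathbf r_I$ corresponding to the boundary divisor $\delta_I$. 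The explicit equations realizing this embedding are in \cite[\S5]{GMEquations}.

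The main obstacle is the combinatorial core of the second step together with smoothness: establishing that the Gr\"obner (equivalently tree-space) fan structure on $\trop(M_{0,n})$ has cones exactly $\pos(\mathbf r_I:I\in\sigma)$ for $\sigma\in\widetilde{\Delta}$, that distinct such cones meet in faces, and --- the subtlest point --- that the associated toric variety is \emph{smooth}, i.e.\ that trivalent-tree cones are unimodular. Everything else is a matter of invoking Kapranov, Speyer--Sturmfels, and Tevelev in the right order and keeping careful track of which fan structure on $\trop(M_{0,n})$ is being used at each stage.
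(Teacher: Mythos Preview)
The paper does not supply a proof of this proposition: it is stated as a summary of known facts, with the sentence ``see \cite{Tevelev} or \cite[\S 5]{GMEquations} for more information'' immediately preceding it, and no proof environment follows. Your outline is therefore not so much an alternative to the paper's argument as a reconstruction of the contents of those references, and as such it is accurate: the torus embedding via Pl\"ucker coordinates modulo the $H$-action, the identification of $\trop(M_{0,n})$ with the space of phylogenetic trees (Speyer--Sturmfels), the matching of compatible splits with the simplices of $\widetilde{\Delta}$, the sch\"on property and Tevelev's machinery to obtain the tropical compactification, and the unimodularity check for smoothness are exactly the ingredients developed in \cite{Tevelev} and \cite[\S 5]{GMEquations}. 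One small point: you should be careful that the fan structure you describe (cones indexed by trees, not necessarily trivalent) is in general a \emph{coarsening} of the Gr\"obner fan structure rather than equal to it, though it is still a valid fan structure on the tropical variety and is the one used in the cited references; this does not affect the argument, but the phrase ``coincides with the Gr\"obner fan structure'' is slightly imprecise.
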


 An important ingredient in the proof of Proposition \ref{p:GLF} is
 the following isomorphism of Chow rings.  Note that while
 $X_{\Delta}$ is not complete, it is smooth, so there is a ring
 structure on $A^*(X_{\Delta}) = \oplus_k A^k(X_{\Delta})$, where
 $A^k(X_{\Delta}) = A_{d-k}(X_{\Delta})$ for $d={n \choose 2}-n$.

 \begin{proposition}\label{chowisomorphism} Let
$i:\MOn \rightarrow X_{\Delta}$ be
   the embedding of $\MOn$ into the toric variety
 $X_{\Delta}$ given in Proposition~\ref{p:DeltaDef}.
  The pullback
 $$i^*:A^*(X_{\Delta}) \rightarrow A^*(\MOn)$$ is an isomorphism, and
  in particular, $A_{d-(n-3)}(X_{\Delta}) \cong \mathbb{Z}$, where
  $d={n\choose2}-n$.
\end{proposition}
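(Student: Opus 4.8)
The plan is to match Keel's presentation of the Chow ring of $\MOn$ against the generators-and-relations description of the Chow ring of the smooth toric variety $X_\Delta$, using the map $i^*$ to identify the two systems of generators. Recall first, from Kapranov's construction (see \cite{Tevelev}, \cite{GMEquations}), that for each $I \in \mathcal I$ the boundary divisor $\delta_I$ of $\MOn$ is the restriction to $\MOn$ of the torus-invariant divisor $D_I := V(\mathbf{r}_I)$ of $X_\Delta$; in other words $i^*([D_I]) = \delta_I$, the very dictionary that underlies the isomorphism $i^*\colon \Pic(X_\Delta) \to N^1(\MOn)$ already used in this section. Since $X_\Delta$ is a smooth toric variety, the classes $[V(\sigma)]$, $\sigma \in \Delta$, generate $A_*(X_\Delta) = A^*(X_\Delta)$ as a group \cite[\S5]{Fulton}, and each equals the monomial $\prod_{I \in \sigma}[D_I]$; hence $A^*(X_\Delta)$ is generated as a ring by $\{[D_I] : I \in \mathcal I\}$. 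On the other side, Keel's theorem \cite{Keel} presents $A^*(\MOn)$ as the quotient of $\mathbb Z[x_I : I \in \mathcal I]$ by the ideal generated by (a) the four-point linear relations among the $x_I$ and (b) the monomials $x_I x_J$ for all crossing pairs, i.e.\ those with $I \not\subseteq J$, $J \not\subseteq I$, and $I \cup J \ne \{1,\dots,n\}$ — precisely the non-faces of the flag complex $\widetilde\Delta$.

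Next I would construct the inverse ring homomorphism $\Psi\colon A^*(\MOn) \to A^*(X_\Delta)$ by sending $x_I \mapsto [D_I]$ and checking that Keel's relations are killed. The monomial relations survive because a crossing pair $\{I,J\}$ spans no cone of $\Delta$, so $V(\mathbf{r}_I) \cap V(\mathbf{r}_J) = \emptyset$ in $X_\Delta$ and therefore $[D_I]\cdot[D_J] = 0$. For the linear relations, note that $i^*$ carries $[D_I]$ to $\delta_I$ and restricts to an isomorphism $\Pic(X_\Delta) = A^1(X_\Delta) \xrightarrow{\sim} N^1(\MOn) = A^1(\MOn)$; hence the subgroup of $\mathbb Z^{\mathcal I}$ of linear relations among the $[D_I]$ in $A^1(X_\Delta)$ coincides with the subgroup of linear relations among the $\delta_I$ in $A^1(\MOn)$, which by Keel is generated by the four-point relations. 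Thus $\Psi$ is a well-defined ring homomorphism. Since $i^*$ is itself a ring homomorphism (a pullback between smooth varieties), and $i^* \circ \Psi$ fixes every generator $\delta_I$ while $\Psi \circ i^*$ fixes every generator $[D_I]$, both composites are the identity, so $i^*$ is an isomorphism of rings. The final assertion is then immediate: $A_{d-(n-3)}(X_\Delta) = A^{n-3}(X_\Delta) \cong A^{n-3}(\MOn) = A_0(\MOn) \cong \mathbb Z$, the last isomorphism because $\MOn$ is irreducible and proper over $\K$.

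The step I expect to require the most care is the integral matching of the degree-one relations: one must verify that the lattice of linear relations among the classes $[D_I]$ in $\Pic(X_\Delta)$ is exactly the span of Keel's four-point relations, not merely that the two agree after tensoring with $\mathbb Q$. This rests on the explicit description of the lattice points $\mathbf{r}_I$ in Proposition~\ref{p:DeltaDef} together with the integral form of the isomorphism $i^*\colon \Pic(X_\Delta) \xrightarrow{\sim} N^1(\MOn)$; granting these, the rest is bookkeeping. An alternative to constructing $\Psi$ by hand would be to quote the presentation $A^*(X_\Delta) \cong \mathbb Z[x_I]/(I_{\mathrm{SR}}(\widetilde\Delta) + \mathcal J)$ for the smooth toric variety $X_\Delta$, with $\mathcal J$ the ideal of linear relations $\sum_I \langle m,\mathbf{r}_I\rangle x_I$, and compare it term by term with Keel's presentation; in that route the point needing attention is that $X_\Delta$ is not complete, so this presentation must be justified directly rather than by the usual complete-toric argument.
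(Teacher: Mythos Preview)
Your approach is correct and takes a genuinely different route from the paper's. The paper does not match Keel's presentation against a toric one directly; instead it quotes the Feichtner--Yuzvinsky theorem, which for the braid arrangement identifies $A^*(\MOn)$ with $A^*(X_{\Sigma(L,G)})$ for a smooth toric variety whose fan $\Sigma(L,G)\subset\mathbb R^{\binom{n}{2}-n+1}$ has one extra ray (along the all-ones vector) beyond those of $\Delta$, and then observes that the projection $\pi\colon\Sigma(L,G)\to\Delta$ induces an isomorphism $\pi^*\colon A^*(X_\Delta)\to A^*(X_{\Sigma(L,G)})$ by comparing Stanley--Reisner presentations (citing Oda and Park for the non-complete case --- exactly the point you raise in your last sentence). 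The composition $\phi\circ\pi^*$ is then $i^*$. Their argument buys them the integral degree-one match for free, at the cost of importing the wonderful-compactification machinery.

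Your argument is more elementary but, as you anticipate, the weight falls on the degree-one step: the isomorphism $i^*\colon\Pic(X_\Delta)\xrightarrow{\sim}\Pic(\MOn)$ you invoke is not established in the paper prior to this proposition, so you must supply it. This can be done without explicit coordinates. Since $i^*([D_I])=\delta_I$, every linear relation among the $[D_I]$ in $A^1(X_\Delta)$ maps to one among the $\delta_I$, so the relation lattice $R_\Delta\subseteq\mathbb Z^{\mathcal I}$ sits inside the Keel relation lattice $R_K$. Both have rank $\binom{n}{2}-n$ (the first because $R_\Delta\cong M$, the second because $\operatorname{rank}\Pic(\MOn)=2^{n-1}-\binom{n}{2}-1=|\mathcal I|-(\binom{n}{2}-n)$), and both quotients $\mathbb Z^{\mathcal I}/R_\Delta\cong\Pic(X_\Delta)$ and $\mathbb Z^{\mathcal I}/R_K\cong\Pic(\MOn)$ are torsion-free; hence $R_K/R_\Delta$ is a finite subgroup of a torsion-free group, so $R_\Delta=R_K$. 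That simultaneously gives the integral isomorphism on $A^1$ and the well-definedness of your $\Psi$, after which your mutual-inverse argument goes through.
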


The second assertion of Proposition \ref{chowisomorphism} follows from
the first, but it can also be proved with an explicit toric
computation.  The only difficulty of this strategy is the
combinatorial bookkeeping.  Instead, we opt for the following
conceptual proof that relies on the realization of
$\MOn$ as a De Concini/Procesi wonderful
compactification of a particular hyperplane arrangement complement.

\begin{proof}[Proof of Proposition \ref{chowisomorphism}]
In \cite[pp533-555]{FY} Feichtner and Yuzvinsky construct a smooth toric variety
$X_{\Sigma(L,G)}$ and a ring isomorphism $\phi : A^*(X_{\Sigma(L,G)})
\rightarrow A^*(\MOn)$.  This construction is a special case of
the identification of the Chow ring of any wonderful
compactification of a hyperplane arrangement complement with the
Chow ring of a toric variety.

The fan $\Sigma(L,G)$ lies in $\mathbb R^{{n \choose 2}-n+1}$, and has
a ray for each ray of $\Delta$, plus one additional ray spanned by
$(1,\dots,1)$.  If $D'_I$ is the torus-invariant divisor corresponding
to the ray of $\Sigma(L,G)$ indexed by $I$, then $\phi(D'_I) =
\delta_I$.  A collection of rays span a cone in $\Delta$ if and only
if the corresponding collection of rays, plus the ray through
$(1,\dots,1)$, span a cone in $\Sigma(L,G)$, and the identification of
$\mathbb R^{{n \choose 2}-n}$ with $\mathbb R^{{n \choose
    2}-n+1}/\mathbb R (1,\dots, 1)$ induces a map of fans $\pi :
\Sigma(L,G) \rightarrow \Delta$.

In \cite{Oda} and \cite{Park} it is shown that the Stanley-Reisner
presentation of \cite[p106]{Fulton} describes the Chow ring of any
smooth toric variety, even if it is not complete.
The fact that $\Delta$ is the projection under $\pi$ of $\Sigma(L,G)$
implies that the Stanley-Reisner ring of $\Sigma(L,G)$ has one more
generator than that of $X_{\Delta}$, corresponding to the ray through
$(1,\dots,1)$.  It has the same monomial generators, and one more
linear relation, which involves the generator corresponding to the ray
through $(1,\dots,1)$.  The induced map $\pi^* \colon A^*(X_{\Delta})
\rightarrow A^*(X_{\Sigma(L,G)})$ is thus an isomorphism, so $i^* =
\phi \circ \pi^*$ is the desired isomorphism.

\end{proof}

We are now able to prove Proposition~\ref{p:GLF}.

\begin{proof}[Proof of Proposition \ref{p:GLF}]
We use the fact that $\widetilde{\Delta}$ is the simplicial complex
associated to $\Delta$.  It then follows from Parts~\ref{i:Gdefn} and
\ref{i:Gasintersection} of Proposition~\ref{p:Gcone} applied to the
toric variety $X_{\Delta}$ that $\mathcal{G}_{\Delta}(\MOn)=
i^*(\G)$, and from Parts~\ref{i:defnLcone} and \ref{i:modifiedGcone} of
Corollary~\ref{c:Lcone} that $\mathcal{L}_{\Delta}(\MOn) =
i^*(\L)$.  The first two inclusions then follow from
Theorem~\ref{t:bound}.
Note that by Proposition \ref{chowisomorphism},
$A_{d-(n-3)}(X_{\Delta}) \cong \mathbb{Z}$ for $d = {n \choose 2}-n$.
Parts~\ref{i:defnUcone} and \ref{i:modifiedGconeforU} of
Proposition~\ref{p:Ucone} thus imply that $\overline{F}_{\Delta}(\MOn)
= i^*(\U)$.  The last inclusion also then follows from
Theorem~\ref{t:bound}.

We now show that the pullback of $\U$ is the cone of $F$-divisors.
Since $A_{d-(n-3)}(X_{\Delta})$ is one-dimensional, we can write $\U =
\{ D : D \cdot V(\tau) \geq 0 \text{ for all } \tau \in \Delta(n-4)
\}$.  By Proposition~\ref{chowisomorphism} there is an isomorphism
$i^* \colon A^*(X_{\Delta}) \rightarrow A^*(\MOn)$.  Now $D \cdot
  [V(\tau)] \geq 0$ if only if $i^*(D) \cdot i^*(V(\tau)) \geq 0$.
  Since $X_{\Delta}$ is smooth, $[V(\tau)] = D_{I_1}\cdot \dots \cdot
  D_{I_{n-4}}$, where $\tau$ is the cone generated by rays labelled by
  $I_1,\dots,I_{n-4}$.  Thus $i^*([V(\tau)]) = i^*(D_{I_1}) \cdot
  \dots \cdot i^*(D_{I_{n-4}}) = \delta_{I_1}\cdot \dots \cdot
  \delta_{I_{n-4}}$.  The intersection of these boundary divisors is
  the class of the $\operatorname{F}$-curve $C_{\tau}$ whose dual
  graph is the tree corresponding to $\tau$.  The set of all classes
  of $\operatorname{F}$-curves is the set of $[C_{\tau}]$ for $\tau
  \in \Delta(n-4)$, so
\begin{align*}
i^*(\U) & = \{ i^*(D) : D \cdot V(\tau) \geq 0 \text{ for all } \tau \in \Delta(n-4)\} \\
& = \{ i^*(D) : i^*(D) \cdot i^*(V(\tau)) \geq 0 \text{ for all } \tau \in \Delta(n-4) \}\\
& = \{ i^*(D) : i^*(D) \cdot [C_{\tau}] \geq 0 \text{ for all} \operatorname{F} \text{-curves }C_{\tau}\},\\
\end{align*}
which is the cone of $\operatorname{F}$-divisors.

\end{proof}

\begin{example}One can check easily by hand for $n=5$, and by computer for $n=6$, that
 $$\mathcal{G}_{\Delta}(\overline{\operatorname{M}} _{0,n}) = \mathcal{L}_{\Delta}(\overline{\operatorname{M}} _{0,n}) = \nef(\overline{\operatorname{M}} _{0,n}) 
= \mathcal{F}_{\Delta}(\overline{\operatorname{M}} _{0,n}).$$
\end{example}

One original motivation for the $\operatorname{F}$-Conjecture was the
expectation that cycles on $\MOn$ should behave like those on a toric
variety, and thus the cone of effective $k$-cycles should be generated
by classes of the $k$-dimensional strata.  This was shown to be false
for divisors ($k=n-4$) independently by Keel and Vermeire
\cite[p4]{GKM}, \cite{Vermeire}).  The $\operatorname{F}$-conjecture
is the case $k=1$.  Proposition \ref{p:GLF} enriches the connection of
$\MOn$ with toric varieties, by showing that the boundary strata are
pullbacks of torus-invariant loci of the noncomplete toric variety
$X_{\Delta}$.

In light of Proposition~\ref{p:GLF}, one way to prove the
$\operatorname{F}$-conjecture would be to show that $\L=\U$.  This has
been computationally verified for $n \leq 6$.  This suggests the following question.

\begin{question}\label{LCon} Is
$ \mathcal{L}_{\Delta}(\MOn) = \nef(\MOn)$?
\end{question}

Even if $\L = \U$, the description of this polyhedral cone given by
$\L$ may be more accessible than the facet-description given by $\U$.

Another way to find the nef cone of $\MOn$ would be to show that it is
a Mori dream space, and give generators for its Cox ring.  This has
been done for $n=6$ by Castravet~\cite{Castravet}.  The resulting
toric embedding $\MOn \rightarrow X_{\Sigma}$ is different from the
embedding into $X_{\Delta}$, as the effective cone of $X_{\Sigma}$
equals that of $\MOn$, which is strictly larger than that of
$X_\Delta$.  However for $n=6$ we have $\L = \ensuremath{\mathcal
  L_{\Sigma}} = \nef(\MOn)$.  This suggests that the phenomenon
illustrated earlier in the case of the del Pezzo surface
$\Bl_5(\mathbb P^2) \subseteq X_{\Sigma_5}$ that different toric
embeddings may give the same cone $\L$ may hold for $\MOn$.

The main goal of the $\operatorname{F}$-conjecture or of Question~\ref{LCon}
is to have a concrete description of the nef cone of $\MOn$ in order to 
study its birational geometry.   The cone
 $\L$  would give such a description.

\raggedright

\begin{bibdiv}
\begin{biblist}

\bib{ArbarelloCornalbaSurvey}{unpublished}{
author={Arbarello, Enrico},
author={Cornalba, Maurizio},
title={Divisors in the moduli spaces of curves},
year={2008},
note={Written for Surveys in Differential Geometry; arXiv:0810.5373v2 [math.AG]}
}		
		
	
\bib{ArdilaKlivans}{article}{
   author={Ardila, Federico},
   author={Klivans, Caroline J.},
   title={The Bergman complex of a matroid and phylogenetic trees},
   journal={J. Combin. Theory Ser. B},
   volume={96},
   date={2006},
   number={1},
   pages={38--49},
   issn={0095-8956},
}

\bib{BerchtoldHausen}{article}{
   author={Berchtold, Florian},
   author={Hausen, J{\"u}rgen},
   title={Cox rings and combinatorics},
   journal={Trans. Amer. Math. Soc.},
   volume={359},
   date={2007},
   number={3},
   pages={1205--1252 (electronic)},
   issn={0002-9947},
}

\bib{BCHM}{article}{
author={Birkar, Caucher},
author={Cascini, Paolo},
author={Hacon, Christopher},
author={Mc{K}ernan, James},
title={Existence of minimal models for varieties of log general type},
journal={J. Amer. Math. Soc.},
volume={23},
number={2},
year={2010},
pages={405--468}
}

\bib{Buckley}{article}{
  author={Anita Buckley},
  title={A three dimensional counterexample to conjecture of {C}ox and {K}atz about nef cones of toric hypersurfaces},
  date={2001},
  note={Available at http://www.fmf.uni-lj.si/$\sim$buckley/papers.html}
}

\bib{Castravet}{article}{
   author={Castravet, Ana-Maria},
   title={The Cox ring of $\overline M_{0,6}$},
   journal={Trans. Amer. Math. Soc.},
   volume={361},
   date={2009},
   number={7},
   pages={3851--3878},
   issn={0002-9947},
}

\bib{Porta}{misc}{
author={Thomas Christof},
author={Andreas L\"obel},
title={PORTA},
note={Software to analyze polytopes and polyhedra}
}

\bib{CoxKatz}{book}{
   author={Cox, David A.},
   author={Katz, Sheldon},
   title={Mirror symmetry and algebraic geometry},
   series={Mathematical Surveys and Monographs},
   volume={68},
   publisher={American Mathematical Society},
   place={Providence, RI},
   date={1999},
   pages={xxii+469},
   isbn={0-8218-1059-6},
}


\bib{TriangulationsBook}{book}{
   author={De Loera, Jes{\'u}s A.},
   author={Rambau, J{\"o}rg},
   author={Santos, Francisco},
   title={Triangulations},
   series={Algorithms and Computation in Mathematics},
   volume={25},
   note={Structures for algorithms and applications},
   publisher={Springer-Verlag},
   place={Berlin},
   date={2010},
   pages={xiv+535},
   isbn={978-3-642-12970-4},
}

\bib{Ewald}{book}{
   author={Ewald, G{\"u}nter},
   title={Combinatorial convexity and algebraic geometry},
   series={Graduate Texts in Mathematics},
   volume={168},
   publisher={Springer-Verlag},
   place={New York},
   date={1996},
   pages={xiv+372},
   isbn={0-387-94755-8},
}
		
\bib{FY}{article}{
   author={Feichtner, Eva Maria},
   author={Yuzvinsky, Sergey},
   title={Chow rings of toric varieties defined by atomic lattices},
   journal={Invent. Math.},
   volume={155},
   date={2004},
   number={3},
   pages={515--536},
   issn={0020-9910},
}

\bib{Fulton}{book}{
   author={Fulton, William},
   title={Introduction to toric varieties},
   series={Annals of Mathematics Studies},
   volume={131},
   note={;
   The William H. Roever Lectures in Geometry},
   publisher={Princeton University Press},
   place={Princeton, NJ},
   date={1993},
   pages={xii+157},
   isbn={0-691-00049-2},
}

\bib{FultonSturmfels}{article}{
   author={Fulton, William},
   author={Sturmfels, Bernd},
   title={Intersection theory on toric varieties},
   journal={Topology},
   volume={36},
   date={1997},
   number={2},
   pages={335--353},
   issn={0040-9383},
}

\bib{GKZ}{book}{
   author={Gelfand, Israel M.},
   author={Kapranov, Mikhail M.},
   author={Zelevinsky, Andrei V.},
   title={Discriminants, resultants and multidimensional determinants},
   series={Modern Birkh\"auser Classics},
   note={Reprint of the 1994 edition},
   publisher={Birkh\"auser Boston Inc.},
   place={Boston, MA},
   date={2008},
   pages={x+523},
   isbn={978-0-8176-4770-4},
}
\bib{GMEquations}{article}{
author={Gibney, Angela},
author={Maclagan, Diane},
title={Equations for Chow and Hilbert quotients},
journal={Algebra Number Theory},
volume={4},
number={7},
year={2010},
pages={855--885}
}

\bib{GKM}{article}{
   author={Gibney, Angela},
   author={Keel, Sean},
   author={Morrison, Ian},
   title={Towards the ample cone of $\overline M_{g,n}$},
   journal={J. Amer. Math. Soc.},
   volume={15},
   date={2002},
   number={2},
   pages={273--294 (electronic)},
   issn={0894-0347},
}
		
\bib{M2}{article}{
  label={M2},
  author={Grayson, Dan},
  author={Stillman, Mike},
  title={Macaulay 2, a software system for research in algebraic
    geometry},
  eprint={www.math.uiuc.edu/Macaulay2/}
}

\bib{HassettLinWang}{article}{
   author={Hassett, Brendan},
   author={Lin, Hui-Wen},
   author={Wang, Chin-Lung},
   title={The weak Lefschetz principle is false for ample cones},
   journal={Asian J. Math.},
   volume={6},
   date={2002},
   number={1},
   pages={95--99},
   issn={1093-6106},
}

	\bib{Hausen2}{article}{
   author={Hausen, J{\"u}rgen},
   title={Cox rings and combinatorics. II},
   language={English, with English and Russian summaries},
   journal={Mosc. Math. J.},
   volume={8},
   date={2008},
   number={4},
   pages={711--757, 847},
   issn={1609-3321},
}

\bib{HuKeel}{article}{
   author={Hu, Yi},
   author={Keel, Sean},
   title={Mori dream spaces and GIT},
   note={Dedicated to William Fulton on the occasion of his 60th birthday},
   journal={Michigan Math. J.},
   volume={48},
   date={2000},
number={1},
   pages={331--348},
   issn={0026-2285},
}


\bib{KapChow}{article}{
   author={Kapranov, Mikhail},
   title={Chow quotients of Grassmannians. I},
   conference={
      title={I. M. Gel\cprime fand Seminar},
   },
   book={
      series={Adv. Soviet Math.},
      volume={16},
      publisher={Amer. Math. Soc.},
      place={Providence, RI},
   },
   date={1993},
   pages={29--110},
}

\bib{KapVer}{article}{
   author={Kapranov, Mikhail},
   title={Veronese curves and Grothendieck-Knudsen moduli space $\overline
   M_{0,n}$},
   journal={J. Algebraic Geom.},
   volume={2},
   date={1993},
   number={2},
   pages={239--262},
   issn={1056-3911},
}

\bib{Kleinschmidt}{article}{
   author={Kleinschmidt, Peter},
   title={A classification of toric varieties with few generators},
   journal={Aequationes Math.},
   volume={35},
   date={1988},
   number={2-3},
   pages={254--266},
   issn={0001-9054},
}

\bib{invitation}{book}{
   author={Kock, Joachim},
   author={Vainsencher, Israel},
   title={An invitation to quantum cohomology},
   series={Progress in Mathematics},
   volume={249},
   note={Kontsevich's formula for rational plane curves},
   publisher={Birkh\"auser Boston Inc.},
   place={Boston, MA},
   date={2007},
   pages={xiv+159},
   isbn={978-0-8176-4456-7},
   isbn={0-8176-4456-3},
}

\bib{GLUpackage}{unpublished}{
label={GLF},
author={Maclagan, Diane},
title={A Macaulay 2 package to compute upper and lower bounds for nef cones},
note={Available at www.warwick.ac.uk/staff/D.Maclagan/papers/GLF.html},
year={2010}
}

\bib{TropicalBook}{unpublished}{
author={Maclagan, Diane},
author={Sturmfels, Bernd},
title={Introduction to tropical geometry},
note={Book in preparation.  Available at http://www.warwick.ac.uk/staff/D.Maclagan/papers/TropicalBook.pdf}
}

\bib{Oda}{article}{
   author={Oda, Tadao},
   title={The algebraic de Rham theorem for toric varieties},
   journal={Tohoku Math. J. (2)},
   volume={45},
   date={1993},
   number={2},
   pages={231--247},
   issn={0040-8735},
}

\bib{Park}{article}{
   author={Park, Hye Sook},
   title={The Chow rings and GKZ-decompositions for ${\bf Q}$-factorial
   toric varieties},
   journal={Tohoku Math. J. (2)},
   volume={45},
   date={1993},
   number={1},
   pages={109--145},
   issn={0040-8735},
}
	

\bib{StillmanTestaVelasco}{article}{
 author={Stillman, Mike},
   author={Testa, Damiano},
   author={Velasco, Mauricio},
   title={Gr\"obner bases, monomial group actions, and the Cox rings of del
   Pezzo surfaces},
   journal={J. Algebra},
   volume={316},
   date={2007},
   number={2},
   pages={777--801},
   issn={0021-8693},
}

\bib{SturmfelsTevelev}{article}{
   author={Sturmfels, Bernd},
   author={Tevelev, Jenia},
   title={Elimination theory for tropical varieties},
   journal={Math. Res. Lett.},
   volume={15},
   date={2008},
   number={3},
   pages={543--562},
   issn={1073-2780},
}

\bib{GBCP}{book}{
  author={Sturmfels, Bernd},
   title={Gr\"obner bases and convex polytopes},
   series={University Lecture Series},
   volume={8},
   publisher={American Mathematical Society},
   place={Providence, RI},
   date={1996},
   pages={xii+162},
   isbn={0-8218-0487-1},
}


\bib{SzendroiCoxKatz}{article}{
   author={Szendr{\H{o}}i, Bal{\'a}zs},
   title={On a conjecture of Cox and Katz},
   journal={Math. Z.},
   volume={240},
   date={2002},
   number={2},
   pages={233--241},
   issn={0025-5874},
}

\bib{SzendroiAmple}{article}{
   author={Szendr{\H{o}}i, Bal{\'a}zs},
   title={On the ample cone of an ample hypersurface},
   journal={Asian J. Math.},
   volume={7},
   date={2003},
   number={1},
   pages={1--5},
  issn={1093-6106},
}

\bib{Tevelev}{article}{
   author={Tevelev, Jenia},
   title={Compactifications of subvarieties of tori},
   journal={Amer. J. Math.},
   volume={129},
   date={2007},
   number={4},
   pages={1087--1104},
   issn={0002-9327},
}


\bib{Vermeire}{article}{
   author={Vermeire, Peter},
   title={A counterexample to Fulton's conjecture on $\overline M_{0,n}$},
   journal={J. Algebra},
   volume={248},
   date={2002},
   number={2},
   pages={780--784},
   issn={0021-8693},
}


\end{biblist}
\end{bibdiv}

\end{document}